\documentclass[11pt,reqno]{amsart}

\usepackage[T1]{fontenc}
\usepackage[utf8]{inputenc}
\usepackage[margin=1in]{geometry}
\usepackage{amsmath,amssymb,amsthm,mathtools}
\usepackage{microtype}
\usepackage{xcolor}
\usepackage{enumitem}
\usepackage{tikz}
\usetikzlibrary{arrows.meta,calc,positioning,shapes.geometric}
\usepackage[colorlinks=true,linkcolor=teal!60!black,citecolor=teal!60!black,urlcolor=teal!60!black]{hyperref}

\numberwithin{equation}{section}

\newtheorem{theorem}{Theorem}[section]
\newtheorem{proposition}[theorem]{Proposition}
\newtheorem{lemma}[theorem]{Lemma}

\theoremstyle{definition}
\newtheorem{remark}[theorem]{Remark}
\newtheorem{example}[theorem]{Example}

\newcommand{\RR}{\mathbb{R}}
\newcommand{\ZZ}{\mathbb{Z}}

\newcommand{\C}{\mathbb{C}}

\newcommand{\Ord}{\operatorname{ord}}
\newcommand{\ex}{\operatorname{ex}}
\newcommand{\bits}{\{0,1\}}
\newcommand{\E}{\mathbb E}
\newcommand{\Pbb}{\mathbb P}
\newcommand{\e}[1]{e^{2\pi i #1}}
\newcommand{\ch}{\operatorname{ch}}

\newcommand{\eps}{\varepsilon}

\newcommand{\Arg}{\operatorname{Arg}}

\title{Short proofs in combinatorics, probability and number theory II}
\author{
Boris Alexeev, Moe Putterman, Mehtaab Sawhney, Mark Sellke, and Gregory Valiant\\
OpenAI
}

\email{\{balexeev,mputt,msawhney,msellke,valiant\}@openai.com}
\date{\today}

\begin{document}

\begin{abstract}
We give a quintet of proofs resulting from questions posed by Erd\H{o}s. These questions concern ordinary lines in planar point sets, sequences with uniformly small exponential sums, $K_4$-free $4$-critical graphs with few chords in any cycle, a counterexample to a ``fewnomial'' version of the Erd\H{o}s--Tur\'{a}n discrepancy bound, and a finiteness theorem for integers $n$ such that $n-a k^2$ is prime for all $k\leq \sqrt{n/a}$ coprime to $n$ (for fixed $a\in\mathbb Z_+$). Each proof is due to an internal model at OpenAI.
\end{abstract}

\maketitle

\section{Introduction}\label{sec:intro}

This note collects solutions to five different problems of Erd\H{o}s in a single manuscript. The presentation is inspired by a series of papers by Alon and by Conlon, Fox and Sudakov \cite{Alon03,Alon08,Alon16,alon2020problems,CFS14,CFS16}: each section states one problem, summarizes the relevant prior literature, and then gives the proof.

Section~\ref{sec:960} concerns a question from \cite{Er84} about planar point sets with no $k$ collinear points and no $r$-point set whose pairwise connecting lines are all ordinary (i.e. contain no third point from the same set).  Erd\H{o}s hoped that the largest possible number of ordinary lines under such a forbidden-clique constraint might be $o(n^2)$, or even $O(n)$.  We disprove all non-trivial cases of this conjecture by constructing sets with no four collinear points, a triangle-free (in fact bipartite) ordinary-line graph, and $\Omega(n^2)$ ordinary lines; our construction takes place within a large cyclic subgroup of a real elliptic curve. We remark that earlier work of F\"uredi--Pal\'asti and Escudero \cite{FuPa84,Esc16} gave a collection of $d$ points with no four on a line but no triple of ordinary lines which form a triangle; the key improvement therefore is finding a construction with quadratically many ordinary lines. The use of cubic curves in connection with ordinary lines also appears in the classical orchard-problem; we refer the reader to the paper of Green--Tao \cite{GT13} for further information. 

Section~\ref{sec:987} answers a question of Erd\H{o}s from \cite{Erd64,Erd65}, later recorded by Hayman \cite{Hay74}, about whether one can have $\widetilde A_k:=\limsup_{N\to\infty}|\sum_{0\leq n<N} e^{2\pi i k x_n}|=o(k)$ for all $k\in\mathbb N$ and some fixed real sequence $(x_i)_{i\geq 0}$.  Clunie \cite{Cl67} proved the lower bound $\widetilde A_k\gg k^{1/2}$ must hold for infinitely many $k$ and gave a deterministic dyadic sequence with $A_k:=\sup_{N\geq 1}|\sum_{0\leq n<N} e^{2\pi i k x_n}|\le k$. Our \emph{randomized} dyadic construction in Section~\ref{sec:987} satisfies $A_k\ll \sqrt{k\log(k)}$, nearly matching Clunie's lower bound.

Section~\ref{sec:1091} disproves a conjecture of Erd\H{o}s \cite{Er76c} asking whether a chromatic number $4$ graph such that every ``small subgraph'' has chromatic number at most $3$ contains a cycle with many chords. Voss proved that every $K_4$-free $4$-chromatic graph has an odd cycle with at least two chords, building on Larson's work \cite{Vo82,La79}.
 Section~\ref{sec:1091} constructs explicit arbitrarily large $K_4$-free $4$-chromatic graphs for which all proper subgraphs are $2$-degenerate, yet every cycle has at most ten chords.

Section~\ref{sec:990} disproves a natural sparse analogue of the Erd\H{o}s--Tur\'an theorem (see e.g. \cite{Sound}) with the degree $d$ of a polynomial $f$ replaced by the number of nonzero coefficients $\nu(f)$ in the discrepancy bound for arguments of zeros; this answers a question raised by Erd\H{o}s \cite{Erd64}. We remark that a result of Hayman \cite{Hayman} implies that the discrepancy in roots is always bounded by $\le \nu(f)-1$. The fewnomial family in Section~\ref{sec:990} has $\nu(f)=N+2$, bounded coefficient growth parameter $M(f)$, and a positive real root of multiplicity $N+1$; thus no bound of order $\sqrt{\nu(f)\log M(f)}$ can hold uniformly.

Finally, Section~\ref{sec:nt} proves that for each fixed integer $a\ge 1$, only finitely many integers $n$ have the property that $n-a k^2$ is prime for every $k$ with $a k^2<n$ and $(k,n)=1$. For $a=1$ this is Erd\H{o}s's Problem~1141 (see \cite{Erd99} and \cite[Problem~1141]{BloWeb}). 
In the case $a=2$, such a finiteness result was previously known in the easier setting where the condition $(k,n)=1$ is not enforced (already disproving \cite[Problem~1140]{BloWeb}). Our argument is a short deduction from a result of Pollack \cite[Theorem~1.3]{Pollack17} on small prime quadratic residues.

\subsection*{Comment on the use of AI}

The proofs in this manuscript are due to an internal model at OpenAI. 
In each case, after verifying the internal model solution, we asked ChatGPT-5.4 Pro five independent times to solve the same problem.
The only successful attempts were \href{https://chatgpt.com/share/69d2896e-20f4-8333-89a1-f9b86eb7f95f}{this shared ChatGPT transcript} on \cite[Problem~960]{BloWeb}
(the subject of Section~\ref{sec:960}) and all five attempts including \href{https://chatgpt.com/share/69d2ec5b-3460-8326-b7e6-9ff6e61f8e25}{this shared ChatGPT transcript} on \cite[Problem~1141]{BloWeb} (the subject of Section~\ref{sec:nt}).
For the former problem, ChatGPT's solution follows a similar route to that of the internal model by working inside a cyclic subgroup of a real elliptic curve, but is slightly weaker in that it does not resolve the case $r=3$ (i.e. ChatGPT's construction ensures $K_4$-freeness but not triangle-freeness). 
For the latter, we first asked both models simply to solve Erd{\H{o}}s problem 1141 concerning $n$ such that $n-k^2$ is never prime.
Upon examining the solutions we realized that the method should extend to $n-ak^2$ for any $a$, and posed this as a follow-up query to ChatGPT which readily generalized the proof.

The role of the human authors was simply to digest the proofs and modify the write-ups for clarity and elegance.  The only further (minor) proof-level modification occurs in the argument for \cite[Problem~1091]{BloWeb}, the subject of Section~\ref{sec:1091}.\footnote{The model's original proof provided the same family of example graphs, but deduced color-criticality from a presentation by Haj\'os joins. The proof retained here instead establishes the stronger statement that every proper subgraph is $2$-degenerate. This degeneracy-based route was suggested by the human authors while digesting the model output; because it gives a slightly simpler verification, that version has been retained.}

\subsection*{Correspondence to Erd{\H{o}}s problems website} The \url{erdosproblems.com} website~\cite{BloWeb}, curated by Thomas Bloom, includes the problems from Sections~\ref{sec:960} through~\ref{sec:nt} as Problems 960, 987, 1091, 990, and 1141, respectively.

\section[Problem 960]{Many ordinary lines but no ordinary clique}\label{sec:960}

\subsection{Statement and reformulation}

Fix integers $r,k \geq 2$.
For a finite set $A \subset \RR^2$, write $\Ord(A)$ for the number of lines $\ell$ with $|\ell \cap A|=2$.
For $n \geq r$, define
\[
F_{r,k}(n):=
\max \Ord(A),
\]
where the maximum is taken over all $n$-point sets $A \subset \RR^2$ such that
\[
|\ell \cap A| \leq k-1 \qquad \text{for every line } \ell
\]
and such that $A$ contains no subset $A' \subset A$ with $|A'|=r$ for which every pair of distinct points in $A'$ spans an ordinary line of $A$.
If no such configuration exists, set $F_{r,k}(n)=-1$.

Given $A$, define its \emph{ordinary-line graph} $G_A$ by
\[
V(G_A)=A,
\qquad
\{p,q\} \in E(G_A)
\iff
|\ell_{pq}\cap A|=2,
\]
where $\ell_{pq}$ denotes the line through $p$ and $q$.
Then
\[
e(G_A)=\Ord(A),
\]
and the desired $r$-point subset is exactly a copy of $K_r$ in $G_A$.
Thus $F_{r,k}(n)$ is the maximum number of edges in an ordinary-line graph $G_A$ subject to the geometric constraint ``no $k$ points collinear'' and the graph-theoretic constraint ``$G_A$ is $K_r$-free.''
Figure~\ref{p960:fig:intro-example} depicts a small example.

Erd\H{o}s asked about the asymptotic behavior of this threshold in \cite{Er84}. The closest previous work we are aware of on this conjecture is due to F\"uredi--Pal\'asti and Escudero \cite{FuPa84,Esc16} which gives a set of points with no $4$ collinear but with no triplet of ordinary lines forming a triangle \cite{FuPa84,Esc16}.

We first note that certain cases are immediate. If $k=2$ and $n\geq 2$, then no valid $n$-point set exists at all, so $F_{r,2}(n)=-1$. If $k=3$ then by definition $G_A=K_n$ and so $F_{r,3}(n)=-1$ for all $n\geq r$ (i.e. no valid sets $A$ exist for such $(r,k,n)$). If $r=2$ and $n\geq k\geq 4$, then the Sylvester--Gallai theorem again implies $F_{2,k}(n)=-1$.
Additionally, one has of course $F_{r,k}(n)\leq \binom{n}{2}$, and in fact Tur\'an's theorem gives the improvement
\[
F_{r,k}(n)\leq \ex(n,K_r)\leq \left(1-\frac{1}{r-1}\right)\frac{n^2}{2}
\]
without using the condition on $k$. Erd\H{o}s wrote \cite{Er84} that he hoped the threshold should be $o(n^2)$, and perhaps even $O(n)$. The main result of this section shows that $F_{r,k}(n)\geq \frac{n^2}{12}-O(n)$.
\begin{theorem}
\label{p960:thm:main}
Fix integers $r \geq 3$ and $k \geq 4$ and $n \geq 72$.
Then
    \[
     F_{r,k}(n)
     \geq 
     \frac{n^2}{12}-\frac{10}{3}n
  .
    \]
\end{theorem}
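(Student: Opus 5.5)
The plan is to build $A$ inside a finite subgroup of a real elliptic curve $E$, given in Weierstrass form and using the chord--tangent group law with the point at infinity as identity. The one geometric input is the standard fact that three points $P,Q,R\in E$ are collinear (counted with multiplicity) if and only if $P+Q+R=0$, together with the fact that a line meets the cubic in at most three points. The latter gives ``no four collinear'' for free, so every $k\ge 4$ is handled at once.

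For distinct $p,q\in A$, the line $\ell_{pq}$ is then ordinary exactly when the third intersection point $-(p+q)$ is not in $A$, or when it coincides with $p$ or $q$, which is the tangent case $2p+q=0$ or $p+2q=0$. The identity component of $E(\RR)$ is a circle group, so it contains a cyclic subgroup $\ZZ_m$ of any order. Every question about $G_A$ therefore becomes additive combinatorics in $\ZZ_m$, and $G_A$ depends only on the set of residues used.

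The model construction uses $3\mid m$, $H=3\ZZ_m$, $X\subseteq 1+H$ and $Y\subseteq 2+H$. For $x\in X$ and $y\in Y$ we have $-(x+y)\in H$, so if $A\cap H=\emptyset$ every cross pair is ordinary. For $x,x'\in X$, the third point $-(x+x')$ lies again in $1+H$, and similarly inside $Y$. So if $X$ and $Y$ are entire cosets, a pair inside a part is non-ordinary apart from the tangent pairs $x'=-2x$ (and $y'=-2y$). These exceptional pairs form a functional graph of out-degree one, hence only $O(n)$ edges. They are exactly what prevents $G_A$ from being bipartite, and so from being $K_3$-free (which suffices for all $r\ge3$).

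The main obstacle is removing these tangent edges without destroying the closure property that keeps the parts independent. I would first try to remove them by modifying the parts rather than the edges. Concretely, I would replace the two full cosets by a structured subset, say a union of progressions or cosets of a smaller subgroup, arranged so that for every $x$ the tangent partner $-2x$ falls outside $A$. At the same time, every non-tangent third point $-(x+x')$ should stay inside the same part, or at least outside the opposite part. Any vertices whose closure then fails get deleted, together with whatever is needed to restore closure. I expect this deletion, or a splitting into three classes instead of two, to cost a constant fraction of the cross edges.

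The target is to end with two parts of sizes about $n/2$ each, or $n/3$ and $2n/3$, where only about a third of the cross pairs survive as guaranteed ordinary lines. That would be consistent with the constant $\tfrac1{12}$ rather than the naive $\tfrac14$. I would then check that $G_A$ is bipartite, hence $K_r$-free for every $r\ge3$.

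Finally, I would do the bookkeeping. I would count the cross pairs that are ordinary, subtract the $O(n)$ losses from tangencies, rounding, and $n$ not being a convenient multiple, and aim to get exactly $\frac{n^2}{12}-\frac{10}{3}n$ for $n\ge 72$. The threshold $72$ presumably makes the divisibility and rounding adjustments fit. I would fix $m$ as a suitable multiple of $6$ or $12$ near a constant times $n$, and trim the remaining points from a part so that the edge count degrades by at most $O(n)$.

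If the coset-refinement idea fails to make the tangent pairs harmless, I would fall back on a curve whose real locus has two components, with group $\ZZ_m\times\ZZ_2$. The $\ZZ_2$-coordinate gives an extra parity constraint on $2x+x'$ that may rule out tangencies between chosen points.
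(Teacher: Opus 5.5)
\textbf{There is a genuine gap.} You set up the group-law dictionary correctly and identify the right obstacle, but you never produce a construction. Everything after the mod-$3$ model is a list of repairs you would try, and the repair you describe cannot work.

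In your model $A\cap H=\emptyset$, so every cross pair $x\in X$, $y\in Y$ is ordinary. Any edge $\{x,x'\}$ inside $X$ together with any $y\in Y$ is then a triangle, so triangle-freeness forces $X$ to be independent in $G_A$. The third point $-(x+x')$ of an intra-$X$ pair lies in $1+H$, so if it lies in $A$ at all it lies in $X$. Independence of $X$ therefore means every line spanned by $X$ contains a third point of $X$, i.e.\ $X$ spans no ordinary line of its own. By Sylvester--Gallai, and since no four points of $E$ are collinear, this forces $|X|\le 3$.

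Your weaker requirement, that $-(x+x')$ stay ``outside the opposite part'', is vacuous, since it never lies in $2+H$. What you actually need is that it lies in $A$. Combined with $-2x\notin A$, that is exactly what Sylvester--Gallai forbids. The structural reason is that $-2\equiv 1\pmod 3$: the tangent map $x\mapsto -2x$ preserves your classes, so no choice of subsets of these cosets can separate tangent partners while keeping all cross pairs ordinary. Your numerology (two parts of size about $n/2$, with a third of the cross pairs ordinary) is correct, but achieving it requires giving up ``all cross pairs ordinary'', which your plan never does.

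\textbf{The missing idea} is to pass to a modulus in which negation and both tangent multipliers swap the two sides. The paper does the following.
\begin{itemize}
\item It takes $C\cong\mathbb{Z}/7m\mathbb{Z}$ inside $E(\mathbb{R})$ and deletes the index-$7$ subgroup $H$, leaving $A_0=C_1\sqcup\cdots\sqcup C_6$ with $|A_0|=6m$.
\item It splits the classes by quadratic character: $U=\{1,2,4\}$ and $V=\{3,5,6\}$.
\item For $x\in C_i$ and $y\in C_j$, the third point $-(x+y)$ lies in $A_0$ unless $j\equiv -i$. So the pair can be ordinary only if $j\equiv -i$, or if it is tangent: $y=-2x$ gives $j\equiv -2i$, and $x=-2y$ gives $j\equiv 3i$.
\item Since $-1$, $-2$ and $3$ are all non-residues mod $7$, each of these relations maps $U$ to $V$. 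Hence $G_{A_0}$ is bipartite with sides $C_1\sqcup C_2\sqcup C_4$ and $C_3\sqcup C_5\sqcup C_6$.
\item The three opposite pairs $(C_1,C_6)$, $(C_2,C_5)$, $(C_4,C_3)$ are complete bipartite, giving $3m^2=n^2/12$ ordinary lines.
\end{itemize}

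For $n=6m+s$ with $1\le s\le 5$, the paper adds the explicit points $h,2h,-3h,3h,-4h\in H$.
\begin{itemize}
\item A point $t\in H$ has no ordinary edge to $A_0$, because the third point $-t-x$ lies in $C_{-i}$.
\item The induced graph on the added points is bipartite, checked by hand.
\item Each added point destroys exactly $3m$ opposite-pair edges, so $\Ord(A)\ge 3m^2-3ms\ge n^2/12-\tfrac{10}{3}n$.
\end{itemize}
The hypothesis $n\ge 72$ only ensures $m\ge 12$, so that these added points are distinct and nonzero.
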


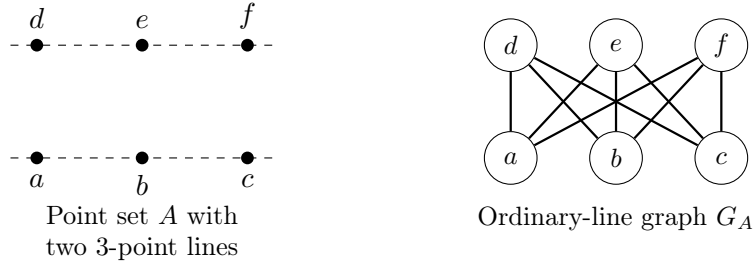
\begin{figure}[t]
\centering
\begin{tikzpicture}[
    pt/.style={circle,fill=black,inner sep=1.7pt},
    gvertex/.style={circle,draw,minimum size=7mm,inner sep=1pt,font=\small},
    gedge/.style={line width=0.85pt}
]
\begin{scope}[xshift=0cm]
  \node[pt,label=below:$a$] (a) at (0,0) {};
  \node[pt,label=below:$b$] (b) at (1.4,0) {};
  \node[pt,label=below:$c$] (c) at (2.8,0) {};
  \node[pt,label=above:$d$] (d) at (0,1.5) {};
  \node[pt,label=above:$e$] (e) at (1.4,1.5) {};
  \node[pt,label=above:$f$] (f) at (2.8,1.5) {};
  \draw[dashed] (-0.35,0) -- (3.15,0);
  \draw[dashed] (-0.35,1.5) -- (3.15,1.5);
  \node[font=\small,align=center] at (1.4,-1) {Point set $A$ with\\ two $3$-point lines};
\end{scope}

\begin{scope}[xshift=6.3cm]
  \node[gvertex] (ga) at (0,0) {$a$};
  \node[gvertex] (gb) at (1.4,0) {$b$};
  \node[gvertex] (gc) at (2.8,0) {$c$};
  \node[gvertex] (gd) at (0,1.5) {$d$};
  \node[gvertex] (ge) at (1.4,1.5) {$e$};
  \node[gvertex] (gf) at (2.8,1.5) {$f$};
  \draw[gedge] (ga) -- (gd);
  \draw[gedge] (ga) -- (ge);
  \draw[gedge] (ga) -- (gf);
  \draw[gedge] (gb) -- (gd);
  \draw[gedge] (gb) -- (ge);
  \draw[gedge] (gb) -- (gf);
  \draw[gedge] (gc) -- (gd);
  \draw[gedge] (gc) -- (ge);
  \draw[gedge] (gc) -- (gf);
  \node[font=\small] at (1.4,-0.8) {Ordinary-line graph $G_A$};
\end{scope}
\end{tikzpicture}
\caption{A simple example of $G_A$.
In the point configuration on the left, the only nonordinary lines are the two lines through
$a,b,c$ and through $d,e,f$.
Accordingly, on the right the graph $G_A$ is the complete bipartite graph $K_{3,3}$.}
\label{p960:fig:intro-example}
\end{figure}

\subsection{Elliptic-curve construction}
We now briefly summarize the construction of the set $A$. The key point is to take a large torsion subgroup $\mathbb{Z}/(7m\mathbb{Z})$ of the real points on an elliptic curve and remove all points in the zero residue class modulo $7$. The ordinary lines then come from pairs $(x,y)$ such that $x+y\equiv 0 \pmod 7$ or $x = -2y$ or $y = -2x$ in $\mathbb{Z}/(7m\mathbb{Z})$. Via direct inspection the ordinary-line graph forms a bipartite graph and this completes the proof when $n$ is divisible by $6$.
When $n$ is not divisible by $6$, a constant number of additional points from the removed residue class are added back to $A$ in an ad-hoc manner (see Subsection~\ref{subsubec:adjust-size}).
We note that while we use a specific elliptic curve below for concreteness, any (non-degenerate) elliptic curve suffices (even in the case of a two-component curve, one just works within the component containing the identity).

\subsubsection{The ambient cubic}

Let $E$ be the projective closure of the affine curve
\[
y^2=x^3-x+1.
\]
Equivalently, in homogeneous coordinates $[X:Y:Z]$ on $\mathbb{P}^2$, the curve $E$ is given by
\[
Y^2Z=X^3-XZ^2+Z^3.
\]
Let $O=(0:1:0)$ be its point at infinity.
Write
\[
E(\RR):=\{[X:Y:Z]\in \mathbb{P}^2(\RR):Y^2Z=X^3-XZ^2+Z^3\}
\]
for the real locus of this projective curve, so $E(\RR)$ consists of the affine real solutions to $y^2=x^3-x+1$ together with the point at infinity $O$.
Additive notation is used for the group law on $E$, with identity $O$.
Only the following standard facts about elliptic curves are needed.
First, the chord--tangent construction turns a smooth plane cubic with a distinguished point $O$ into an abelian group: if a line meets $E$ in three points $x,y,z$, counted with multiplicity, then
\[
x+y+z=O.
\]
See e.g. \cite[Chapter~I, Theorem~3.1 and Proposition~4.10 and Remark~4.11(c)]{Mil21}.
In the affine model $y^2=f(x)$, negation is reflection across the $x$-axis:
\[
-(x,y)=(x,-y).
\]
Second, $\mathbb{P}^2(\RR)$ is compact and $E(\RR)$ is a closed subset of it, hence $E(\RR)$ is compact.
For a real Weierstrass cubic $y^2=f(x)$ with $f$ squarefree, the real locus has one or two connected components according as $f$ has one or three real roots; in the connected case, $E(\RR)$ is isomorphic as a Lie group to a circle.
See \cite[\emph{Introduction to Rational Points on Plane Curves}, \S7, Proposition~7.2]{Hus04}.

\begin{lemma}
\label{p960:lem:connected}
The real locus $E(\RR)$ is connected.
Consequently $E(\RR)$ contains a cyclic subgroup of order $M$ for every integer $M \geq 1$.
\end{lemma}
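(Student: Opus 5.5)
The plan is to reduce connectedness to counting the real roots of $f(x)=x^3-x+1$, using the cited fact from \cite[\S7, Proposition~7.2]{Hus04}: for a real Weierstrass cubic $y^2=f(x)$ with $f$ squarefree, $E(\RR)$ is connected exactly when $f$ has one real root, and in that case it is isomorphic as a Lie group to the circle $\RR/\ZZ$.

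First I will check that $f$ is squarefree and has exactly one real root. Its discriminant is $-4(-1)^3-27(1)^2=4-27=-23<0$. A negative discriminant means $f$ has distinct roots, one real and two complex conjugate. To see this directly, note that $f'(x)=3x^2-1$ vanishes at $x=\pm 1/\sqrt3$. At the local minimum $x=1/\sqrt3$ we get
\[
f(1/\sqrt3)=\tfrac{1}{3\sqrt3}-\tfrac{1}{\sqrt3}+1=1-\tfrac{2}{3\sqrt3}>0.
\]
Hence $f$ has only one sign change, so exactly one real root, and that root is simple. This also shows $E$ is smooth, so the group law recalled above applies. The cited proposition then gives that $E(\RR)$ is connected and isomorphic to $\RR/\ZZ$ as a Lie group.

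For the second claim, fix such an isomorphism $\phi:\RR/\ZZ\to E(\RR)$. It is a group isomorphism with $\phi(0)=O$. For any $M\ge1$, the element $1/M\in\RR/\ZZ$ has order exactly $M$, so $\phi(1/M)$ generates a cyclic subgroup of $E(\RR)$ of order $M$, namely $\{\phi(j/M):0\le j<M\}$.

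The only step needing care is matching our curve to the hypotheses of the cited result, namely squarefreeness and the one-real-root case. The discriminant computation handles both. No further obstacle is expected, since the Lie-group identification is taken as a black box from \cite{Hus04}.
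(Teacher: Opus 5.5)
Your proposal is correct and follows the paper's proof essentially verbatim: the discriminant $-23$ gives smoothness and a single real root, the cited classification gives $E(\RR)\cong\RR/\ZZ$, and the image of $1/M$ generates a cyclic subgroup of order $M$. Your calculus check that the local minimum value $1-\tfrac{2}{3\sqrt3}$ is positive is a valid supplement to the paper's bare assertion that $x^3-x+1$ has exactly one real root.
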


\begin{proof}
The discriminant of $x^3-x+1$ is
\[
-4(-1)^3-27(1)^2=-23 \neq 0,
\]
so $E$ is smooth.
The cubic polynomial $x^3-x+1$ has exactly one real root, hence the real locus of $E$ is connected.
By the preceding classification of the real locus, it is therefore isomorphic to a circle, hence to $\RR/\ZZ$.
In particular, for every $M \geq 1$ it has a cyclic subgroup of order $M$.
\end{proof}

The standard collinearity criterion on a cubic is also needed:
three points $x,y,z \in E$ are collinear, counted with multiplicity, if and only if
\[
x+y+z=O.
\]

\begin{lemma}
\label{p960:lem:no4}
Every affine line in $\RR^2$ meets $E(\RR)\setminus\{O\}$ in at most three points.
In particular, every finite subset of $E(\RR)\setminus\{O\}$ has no four collinear points.
\end{lemma}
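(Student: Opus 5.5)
We have to show that an affine line meets $E(\RR)\setminus\{O\}$ in at most three points. The plan is to substitute the line into the cubic and count roots of a one-variable polynomial of degree at most $3$, treating vertical and non-vertical lines separately.

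\textbf{Vertical lines.} Let $\ell$ be the line $x=c$. A point $(c,y)$ of $\ell$ lies on $E$ exactly when $y^2=c^3-c+1$. For fixed $c$ this equation has at most two real solutions in $y$. Hence $\ell$ meets $E(\RR)\setminus\{O\}$ in at most two points.

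\textbf{Non-vertical lines.} Let $\ell$ be the line $y=mx+b$. A point of $\ell$ lies on $E$ exactly when its $x$-coordinate is a root of
\[
p(x)=x^3-(mx+b)^2-x+1 .
\]
This polynomial is monic of degree exactly $3$, since the only cubic term is $x^3$; in particular it is not identically zero. So $p$ has at most three real roots. On a non-vertical line, distinct points have distinct $x$-coordinates, so $\ell\cap(E(\RR)\setminus\{O\})$ has at most three points.

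Combining the two cases, every affine line meets $E(\RR)\setminus\{O\}$ in at most three points. The second assertion follows at once: if a finite subset $A\subseteq E(\RR)\setminus\{O\}$ had four collinear points, the affine line through them would meet $E(\RR)\setminus\{O\}$ in at least four points, a contradiction.

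There is no real obstacle in this argument. The only point needing care is that the statement concerns affine lines, so the point at infinity $O$ never has to be counted. One could instead argue projectively, using B\'ezout or the collinearity criterion $x+y+z=O$ on the cubic, but the direct root count is simpler.
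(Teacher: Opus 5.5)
Your proof is correct, but it takes a more elementary route than the paper. The paper argues projectively in one line: by B\'ezout's theorem a projective line meets the cubic $E$ in at most three points counted with multiplicity, so an affine line contains at most three affine points of $E$.

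Your argument is the same computation done by hand in the Weierstrass model:
\begin{itemize}
\item substituting $y=mx+b$ gives a monic cubic in $x$;
\item a vertical line $x=c$ gives $y^2=c^3-c+1$, with at most two affine solutions.
\end{itemize}
In the vertical case the third projective intersection is $O$ itself, which is why only two points remain.

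Your route is self-contained. It needs no appeal to B\'ezout, and it makes explicit the nondegeneracy that B\'ezout silently requires, namely that the line is not a component of $E$. In your setting this is automatic, since the restricted polynomial is visibly nonzero. The paper's route is shorter and more general. It works for any plane cubic without a linear component, in any coordinates, and it carries the multiplicity count. That multiplicity count is the fact behind the chord--tangent law $x+y+z=O$ used in the later constructions.
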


\begin{proof}
A projective line meets the projective cubic $E$ in at most three points, counting multiplicity, by B\'ezout's theorem.
Thus an affine line can contain at most three affine points of $E$.
\end{proof}

\subsubsection{The base set}

Fix an integer $m \geq 1$.
By Lemma~\ref{p960:lem:connected}, choose a cyclic subgroup
\[
C=\langle g\rangle \leq E(\RR),
\qquad
|C|=7m.
\]
Let
\[
H=\langle 7g\rangle \leq C,
\qquad
|H|=m.
\]
For $i \in \ZZ/7\ZZ$, write
\[
C_i=ig+H.
\]
Then
\[
C=C_0 \sqcup C_1 \sqcup \cdots \sqcup C_6,
\qquad
C_0=H.
\]
Define the base configuration
\[
A_0=C\setminus H=C_1\sqcup C_2\sqcup C_3\sqcup C_4\sqcup C_5\sqcup C_6.
\]
Thus $|A_0|=6m$.

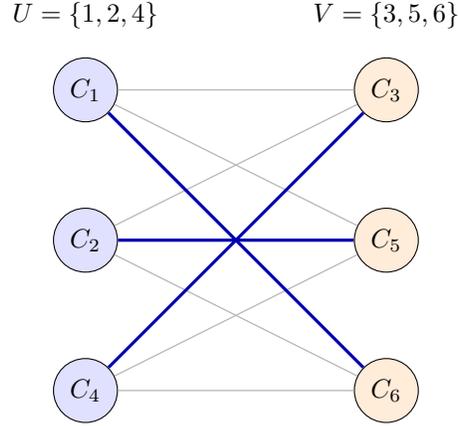
\begin{figure}[t]
\centering
\begin{tikzpicture}[
    coset/.style={circle,draw,minimum size=8.5mm,inner sep=1pt,font=\small},
    ucoset/.style={coset,fill=blue!12},
    vcoset/.style={coset,fill=orange!15}
]
\node[ucoset] (c1) at (0,2) {$C_1$};
\node[ucoset] (c2) at (0,0) {$C_2$};
\node[ucoset] (c4) at (0,-2) {$C_4$};

\node[vcoset] (c3) at (4,2) {$C_3$};
\node[vcoset] (c5) at (4,0) {$C_5$};
\node[vcoset] (c6) at (4,-2) {$C_6$};

\foreach \u in {c1,c2,c4}{
  \foreach \v in {c3,c5,c6}{
    \draw[gray!55,line width=0.5pt] (\u) -- (\v);
  }
}

\draw[blue!70!black,line width=1.2pt] (c1) -- (c6);
\draw[blue!70!black,line width=1.2pt] (c2) -- (c5);
\draw[blue!70!black,line width=1.2pt] (c4) -- (c3);

\node[font=\small] at (0,3) {$U=\{1,2,4\}$};
\node[font=\small] at (4,3) {$V=\{3,5,6\}$};
\end{tikzpicture}
\caption{Coset-level edge pattern in Proposition~\ref{p960:prop:base}. The gray edges are the admissible residue relations
$j\equiv -i$, $j\equiv -2i$, or $j\equiv 3i\pmod 7$, all crossing from $U$ to $V$.
The highlighted opposite pairs $(C_1,C_6)$, $(C_2,C_5)$, and $(C_4,C_3)$ are the three families contributing $m^2$ ordinary edges each.}
\label{p960:fig:coset-pattern}
\end{figure}

\begin{proposition}
\label{p960:prop:base}
The ordinary-line graph $G_{A_0}$ is bipartite, hence triangle-free.
Moreover,
\[
\Ord(A_0)\geq 3m^2.
\]
\end{proposition}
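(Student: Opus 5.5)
Throughout, points of $C$ are identified with $\ZZ/7m\ZZ$ via $x\mapsto xg$. Since $C$ is a group and the identity $O$ is the point at infinity, $O\notin A_0$.

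For distinct $x,y\in A_0$, the line $\ell_{xy}$ meets $E$ in a third point $z=-(x+y)$, counted with multiplicity. If $z=O$, the line is vertical and contains no third affine point. By Lemma~\ref{p960:lem:no4}, $\ell_{xy}$ has no further points of $A_0$ besides $z$. So $\{x,y\}$ is an edge of $G_{A_0}$ if and only if $z\notin A_0\setminus\{x,y\}$. Since $z\in C$, this happens exactly when
\[
z\in H \quad\text{or}\quad z=x \quad\text{or}\quad z=y .
\]
In residues these conditions read $x+y\equiv 0\pmod 7$, or $y=-2x$, or $x=-2y$ in $\ZZ/7m\ZZ$. (When $z=x$ the line is tangent at $x$ and meets $E$ only at $x$ and $y$, so it is still ordinary.) I would state this characterization first as the key reduction.

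\textbf{Bipartiteness.} Reduce mod $7$. Let $i,j\in\{1,\dots,6\}$ be the residue classes of $x,y$. An edge forces $j\equiv -i$, $j\equiv -2i$, or $i\equiv -2j$. The last is equivalent to $j\equiv 3i$, because $-2\cdot 3\equiv 1\pmod 7$. Take $U=\{1,2,4\}$, the quadratic residues mod $7$, and $V=\{3,5,6\}$, the non-residues. Since $-1$, $-2\equiv 5$ and $3$ are all non-residues mod $7$, each of the maps $i\mapsto -i$, $i\mapsto -2i$, $i\mapsto 3i$ sends $U$ to $V$. Hence every edge joins $\bigcup_{u\in U}C_u$ to $\bigcup_{v\in V}C_v$, so $G_{A_0}$ is bipartite and in particular triangle-free. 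The residue computation can also be checked directly from the table in Figure~\ref{p960:fig:coset-pattern}.

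\textbf{Counting.} Every pair $x\in C_i$, $y\in C_{-i}$ has $x+y\in H$, hence spans an ordinary line. These pairs are distinct since $i\neq -i$. Summing over the three unordered class pairs $\{1,6\}$, $\{2,5\}$, $\{3,4\}$ gives $3m^2$ edges, so $\Ord(A_0)\ge 3m^2$.

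The only delicate step is the reduction in the first paragraph. Using Bézout together with the collinearity criterion, I would handle the degenerate cases carefully: the tangent case $z\in\{x,y\}$ and the vertical-line case $z=O$. Both cases still give ordinary lines, and the claimed inequality is only a lower bound, so neither can cause trouble.
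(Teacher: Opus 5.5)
Your proof is correct and follows the paper's argument essentially step for step. You use the same characterization of ordinary pairs ($z\in H$, $z=x$, or $z=y$), the same residue relations $j\equiv -i,\,-2i,\,3i \pmod 7$ with the partition $U=\{1,2,4\}$, $V=\{3,5,6\}$, and the same count over the three opposite coset pairs. Your remark that $U$ and $V$ are the quadratic residues and non-residues mod $7$, and that $-1,-2,3$ are all non-residues, is a nice conceptual explanation of why the paper's partition works, but it is the same partition.
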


\begin{proof}
Take distinct points $x,y \in A_0$.
Let $z$ be the third point of intersection of the line $\ell_{xy}$ with $E$, counted with multiplicity.
By the cubic group law,
\[
x+y+z=O.
\]
Since $x,y \in C$ and $C$ is a subgroup, one also has $z \in C$.

Because every affine line meets $E$ in at most three points, the line $\ell_{xy}$ contains points of $A_0$ only among $x,y,z$.
Hence $\ell_{xy}$ is ordinary for $A_0$ if and only if either $z \in H$, or $z=x$, or $z=y$.
The three cases are:
\begin{align*}
z \in H
&\iff
x+y \in H,\\
z=x
&\iff
y=-2x,\\
z=y
&\iff
x=-2y.
\end{align*}

Now suppose $x \in C_i$ and $y \in C_j$.
If $\{x,y\}$ is an edge of $G_{A_0}$, then one of the following must hold:
\[
j \equiv -i \pmod 7,
\qquad
j \equiv -2i \pmod 7,
\qquad
j \equiv 3i \pmod 7.
\]
Consider the partition of the six nonzero residues modulo $7$ into
\[
U=\{1,2,4\},
\qquad
V=\{3,5,6\}.
\]
The multipliers $-1$, $-2$, and $3$ all send $U$ onto $V$.
At the coset level, this gives the bipartite pattern shown in Figure~\ref{p960:fig:coset-pattern}.
Therefore every edge of $G_{A_0}$ joins a point of
\[
X=C_1\sqcup C_2\sqcup C_4
\]
to a point of
\[
Y=C_3\sqcup C_5\sqcup C_6.
\]
Therefore $G_{A_0}$ is bipartite, and in particular triangle-free.

To count edges, consider the three opposite coset pairs
\[
(C_1,C_6),\qquad (C_2,C_5),\qquad (C_4,C_3).
\]
If $x \in C_i$ and $y \in C_{-i}$, then $x+y \in H$, so $z=-x-y$ lies in $H$ and $\ell_{xy}$ is ordinary for $A_0$.
Each of the three opposite coset pairs contributes exactly $m^2$ ordinary edges.
Therefore
$\Ord(A_0)\geq 3m^2$.
\end{proof}

\subsubsection{Adjusting the size}
\label{subsubec:adjust-size}

To obtain every large $n$, add a small set inside $H$.
Write
\[
n=6m+s,
\qquad
0 \leq s \leq 5.
\]
For the argument below, assume $m \geq 12$, equivalently $n \geq 72$.
Fix a generator $h$ of $H$ and define
\begin{align*}
T_0&=\varnothing,\\
T_1&=\{h\},\\
T_2&=\{h,2h\},\\
T_3&=\{h,2h,-3h\},\\
T_4&=\{h,2h,-3h,3h\},\\
T_5&=\{h,2h,-3h,3h,-4h\}.
\end{align*}
Because $m \geq 12$, all listed points are distinct and nonzero, so $|T_s|=s$.
Set
\[
A=A_0 \cup T_s.
\]
Then $|A|=6m+s=n$.

\begin{figure}[t]
\centering
\begin{tikzpicture}[
    vertex/.style={circle,draw,minimum size=7mm,inner sep=1pt,font=\scriptsize},
    edge/.style={line width=0.9pt}
]
\begin{scope}[xshift=0cm]
  \node[vertex] (a1) at (0,0.95) {$h$};
  \node[vertex] (a2) at (-1.0,-0.65) {$2h$};
  \node[vertex] (a3) at (1.0,-0.65) {$-3h$};
  \node[font=\small] at (0,-1.65) {$s=3$ (edgeless)};
\end{scope}

\begin{scope}[xshift=4.8cm]
  \node[vertex] (b0) at (0,0) {$3h$};
  \node[vertex] (b1) at (-1.05,0.95) {$h$};
  \node[vertex] (b2) at (-1.05,-0.95) {$2h$};
  \node[vertex] (b3) at (1.05,0) {$-3h$};
  \draw[edge] (b0) -- (b1);
  \draw[edge] (b0) -- (b2);
  \draw[edge] (b0) -- (b3);
  \node[font=\small] at (0,-1.65) {$s=4$ (star)};
\end{scope}

\begin{scope}[xshift=9.6cm]
  \node[vertex] (c0) at (-1.45,1.35) {$h$};
  \node[vertex] (c1) at (-0.75,0) {$2h$};
  \node[vertex] (c2) at (0,0.95) {$3h$};
  \node[vertex] (c3) at (0.75,0) {$-3h$};
  \node[vertex] (c4) at (0,-0.95) {$-4h$};
  \draw[edge] (c1) -- (c2);
  \draw[edge] (c2) -- (c3);
  \draw[edge] (c3) -- (c4);
  \draw[edge] (c4) -- (c1);
  \node[font=\small] at (0,-1.65) {$s=5$ ($4$-cycle + isolated $h$)};
\end{scope}
\end{tikzpicture}
\caption{The induced graphs on $T_s$ in the only nontrivial cases $s=3,4,5$.
These are exactly the configurations used in Proposition~\ref{p960:prop:final-construction} to rule out triangles in $G_A[T_s]$.}
\label{p960:fig:Ts-graphs}
\end{figure}
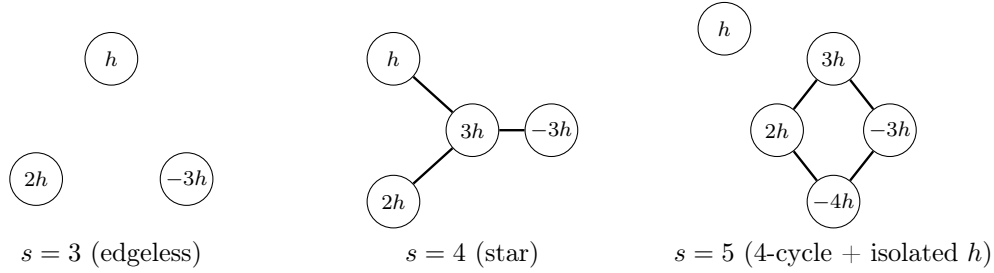

\begin{proposition}
\label{p960:prop:final-construction}
For every $n \geq 72$, the set $A$ defined above satisfies the following.

\begin{enumerate}
    \item No four points of $A$ are collinear.

    \item The ordinary-line graph $G_A$ is bipartite, in particular triangle-free.

    \item
    $\Ord(A)\geq 3m^2-3ms$.
\end{enumerate}
\end{proposition}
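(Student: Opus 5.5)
The plan is to use the chord--tangent description from the proof of Proposition~\ref{p960:prop:base}. For distinct $x,y\in A$, let $z=-x-y\in C$ be the third intersection point of $\ell_{xy}$ with $E$. By Lemma~\ref{p960:lem:no4}, the line $\ell_{xy}$ contains no points of $A$ other than $x,y,z$. Hence $\ell_{xy}$ is ordinary for $A$ if and only if $z\notin A$ or $z\in\{x,y\}$. Part (1) is then immediate, since $A\subset C\setminus\{O\}\subset E(\RR)\setminus\{O\}$ and Lemma~\ref{p960:lem:no4} applies.

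For part (2), I would split the edges of $G_A$ into three types.

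\emph{Pairs $x,y\in A_0$.} Here $z\notin A$ forces $z\notin A_0$, so $z\in H$. Alternatively $z=x$ or $z=y$. In every case one of the residue relations $j\equiv -i,-2i,3i \pmod 7$ holds. Exactly as in Proposition~\ref{p960:prop:base}, every such edge therefore joins $X=C_1\sqcup C_2\sqcup C_4$ to $Y=C_3\sqcup C_5\sqcup C_6$.

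\emph{Mixed pairs $x\in C_i$ ($i\neq 0$), $t\in T_s\subset H$.} Here $z\in C_{-i}\subset A_0\subset A$. Also $z\neq t$, since the residues differ. Finally $z=x$ would mean $t=-2x$, which lies in residue class $-2i\not\equiv 0$, a contradiction. So there are no mixed edges.

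\emph{Pairs inside $T_s$.} I would write the elements as $ah,bh$ with $a,b\in\{1,2,\pm3,-4\}$. The pair is an edge iff $-(a+b)h\notin T_s$ or $-(a+b)\equiv a$ or $b \pmod m$. Since $m\ge 12$, all the relevant integers (of absolute value at most $8$) are distinct modulo $m$, so this is a finite check over integers. For $s\le 2$ there is at most one pair, so the graph is trivially bipartite. For $s=3$ every third point lies in the set: $h+2h\mapsto -3h$, $h-3h\mapsto 2h$, $2h-3h\mapsto h$, so there are no edges. For $s=4,5$ one recovers the star and the $4$-cycle plus an isolated vertex of Figure~\ref{p960:fig:Ts-graphs}.

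Then $G_A$ is the disjoint union of a bipartite graph on $A_0$ and a bipartite graph on $T_s$, so $G_A$ is bipartite. The step I expect to need the most care is this case check inside $T_s$. It requires listing all $\binom{5}{2}$ pairs, and remembering that the line through $ah$ and $bh$ can be tangent when $b=-2a$ (for example $2h$ and $-4h$).

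For part (3), I would count only opposite-coset pairs $x\in C_i$, $y\in C_{-i}$ with $i\in\{1,2,4\}$. For such a pair $z\in H$, and $z$ cannot equal $x$ or $y$ by residues. So the pair is an edge of $G_A$ unless $z\in T_s$, i.e.\ unless $y=-x-t$ for some $t\in T_s$. For each $x$ and each $t$ there is exactly one such $y$. Hence each of the three families loses at most $ms$ of its $m^2$ edges, giving $\Ord(A)\ge 3m^2-3ms$.
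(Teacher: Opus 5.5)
Your proposal is correct and follows the paper's proof essentially step for step. It uses the same third-point criterion for ordinary lines and the same three-way split of edges (your direct residue argument on $A_0$ replaces the paper's observation that $G_A[A_0]\subseteq G_{A_0}$). It also uses the same finite check on $T_s$ and the same count of at most $3ms$ destroyed opposite-coset edges.

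One wording fix: integers of absolute value at most $8$ are not pairwise distinct modulo $m$ when $12\le m\le 16$. What you actually use is that every difference $-(a+b)-c$ with $a,b,c\in\{1,2,\pm3,-4\}$ lies in $[-8,11]$, so for $m\ge 12$ these congruences are genuine integer equalities.
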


\begin{proof}
Since $A \subset E(\RR)\setminus\{O\}$, Lemma~\ref{p960:lem:no4} gives (1).
Next we analyze ordinary edges.

\medskip
\noindent
\emph{No edges join $A_0$ to $T_s$.}
Take $t \in T_s \subset H$ and $x \in A_0$.
If $x \in C_i$ with $i \neq 0$, then the third point on the line through $t$ and $x$ is
\[
z=-t-x \in C_{-i}\subset A_0.
\]
This point is distinct from both $t$ and $x$, so the line through $t$ and $x$ is not ordinary.
Thus there are no edges between $A_0$ and $T_s$.

\medskip
\noindent
\emph{The graph induced by $T_s$ is bipartite.}
Since $T_s \subset H$, for any distinct $u,v \in T_s$ the third point of the line through $u$ and $v$ also lies in $H$.
So ordinariness inside $T_s$ can be checked purely inside the cyclic group $H$.
A direct computation gives the graphs shown in Figure~\ref{p960:fig:Ts-graphs}:
\begin{itemize}
    \item for $s=0,1,2$, the graph on $T_s$ has clique number at most $2$;
    \item for $s=3$, there are no edges at all, because $h+2h+(-3h)=O$;
    \item for $s=4$, the only edges are
    \[
    \{h,3h\},\qquad \{2h,3h\},\qquad \{3h,-3h\},
    \]
    which form a star;
    \item for $s=5$, the only edges are
    \[
    \{2h,3h\},\qquad \{2h,-4h\},\qquad \{3h,-3h\},\qquad \{-4h,-3h\},
    \]
    which form a $4$-cycle.
\end{itemize}
Hence $G_A[T_s]$ is bipartite in every case.

\medskip
\noindent
\emph{The graph induced by $A_0$ stays bipartite.}
If a pair $\{x,y\}\subset A_0$ is nonordinary in $A_0$, then its third point already lies in $A_0$, and so it remains nonordinary after adding $T_s$.
Thus $G_A[A_0]$ is a subgraph of the bipartite graph $G_{A_0}$ from Proposition~\ref{p960:prop:base}.
Combining the preceding paragraphs proves (2).

For (3), count only the ordinary edges coming from the three opposite coset pairs
\[
(C_1,C_6),\qquad (C_2,C_5),\qquad (C_4,C_3).
\]
By Proposition~\ref{p960:prop:base}, these give $3m^2$ ordinary edges in $A_0$.
When a point $t \in T_s \subset H$ is added, such an edge $\{x,y\}$ disappears exactly when its third point is $t$, equivalently when
\[
x+y=-t.
\]
Fix one opposite pair $(C_i,C_{-i})$ and one $t \in H$.
For each $x \in C_i$, there is a unique $y=-t-x \in C_{-i}$.
Hence exactly $m$ edges from that opposite pair are destroyed by $t$.
There are three opposite pairs, so each added point destroys exactly $3m$ of the previously counted edges.
Since $|T_s|=s$, at least
\[
3m^2-3ms
\]
of those edges survive in $A$.
Therefore $\Ord(A)\geq 3m^2-3ms$.
\end{proof}

\subsection{Proof of the main theorem}

\begin{proof}[Proof of Theorem~\ref{p960:thm:main}]
Fix $n \geq 72$, write $n=6m+s$ with $0 \leq s \leq 5$, and construct the set $A$ from Proposition~\ref{p960:prop:final-construction}.
Part (1) of that proposition shows that $A$ has no four collinear points, hence certainly no $k$ collinear points.
Part (2) shows that $G_A$ is triangle-free, so in particular $G_A$ contains no $K_r$.
Therefore $A$ is a valid configuration, so
\[
F_{r,k}(n)\geq \Ord(A)\geq 3m^2-3ms.
\]
Since $n=6m+s$,
\[
3m^2-3ms
=
\frac{n^2}{12}-\frac{2s}{3}n+\frac{7s^2}{12}
\geq
\frac{n^2}{12}-\frac{10}{3}n,
\]
because $0 \leq s \leq 5$.
Hence
\[
F_{r,k}(n)\geq \frac{n^2}{12}-\frac{10}{3}n.
\qedhere
\]
\end{proof}

\section[Problem 987]{A randomized sequence with uniformly small exponential sums}\label{sec:987}

This problem concerns real sequences $x_0,x_1,\dots$ such that every initial segment $(x_i)_{0\leq i<N}$ has complex exponential sums of small norm.
The construction is a prefix-randomized version of the binary van der Corput sequence. 
On each dyadic block $[P2^r,(P+1)2^r)$ of length $2^r$, the right-most $r$ scrambled digits of $i$, which run through all binary words exactly once, are used to generate the first $r$ binary digits of $x_i\in [0,1)$. 
Meanwhile the remaining tail digits of $x_i$ are conditionally independent.
This gives a sum of independent centered random variables on each dyadic block, enabling multi-scale estimates after carefully identifying the right prefix set to union bound over.

\subsection{Introduction}

Given a sequence $(x_n)_{n \ge 0}$ in $\mathbb{R}/\mathbb{Z}$ and an integer $k \ge 1$, set
\[
S_N(k) := \sum_{n=0}^{N-1} \e{k x_n}
\qquad (N \ge 1).
\]
We show there exists a sequence $(x_n)_{n \ge 0}$ in $\mathbb{R}/\mathbb{Z}$ such that
\[
A_k:=\sup_{N \ge 1} |S_N(k)| \ll \sqrt{k \log(2k)}
\qquad (k \ge 1).
\]
In particular this answers a question of Erd\H{o}s \cite{Erd64,Erd65}, recorded as Problem 7.21 in Hayman's problem list \cite{Hay74} and listed on the Erd\H{o}s problems website as Problem~987 \cite{BloWeb}, which asks whether it is possible that
\[
\widetilde A_k:=\limsup_{N\to\infty}|S_N(k)|=o(k).
\]
Of course $A_k\geq \widetilde A_k$; the present and previous work gives upper bounds for $A_k$ and lower bounds for $\widetilde A_k$.
Erd\H{o}s observed in \cite{Erd64} that $\widetilde A_k$ always diverges,
and later gave a very easy proof that $\widetilde A_k \gg \log k$ for infinitely many $k$ \cite{Erd65}. Clunie \cite{Cl67} proved the much stronger universal lower bound
\[
\widetilde A_k \gg k^{1/2}
\]
for infinitely many $k$, and also gave an explicit sequence with $A_k\le k$ for all $k$. In particular our improved upper bound is sharp up to the logarithmic factor.

\begin{theorem}\label{p987:thm:main}
There exists a sequence $(x_n)_{n \ge 0}$ in $\mathbb{R}/\mathbb{Z}$ such that
\[
A_k := \sup_{N \ge 1} |S_N(k)| \ll \sqrt{k \log(2k)}
\qquad (k \ge 1).
\]
\end{theorem}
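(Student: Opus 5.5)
We will construct the sequence as a prefix-randomized binary van der Corput sequence. Fix independent uniform random bits and define, for each $i\ge 0$ and each dyadic scale $r$, the first $r$ binary digits of $x_i$ as a bijective scrambling (via random XOR masks indexed by the prefix $P=\lfloor i/2^r\rfloor$, in the style of Owen scrambling) of the last $r$ binary digits of $i$. The scrambling is chosen consistently across scales, so that for every $r$ and every block $B_{P,r}=[P2^r,(P+1)2^r)$ the map $i\mapsto$ (first $r$ digits of $x_i$) is a bijection onto $\{0,1\}^r$. The remaining digits are then such that, conditionally on the first $r$ digits, the values $x_i-\lfloor 2^r x_i\rfloor 2^{-r}$ for $i\in B_{P,r}$ are independent and uniform on $[0,2^{-r})$. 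Concretely, take $x_i=\sum_{j\ge1}\epsilon_j(i)2^{-j}$ with $\epsilon_j(i)=\beta_j(i)\oplus \xi_{j,\lfloor i/2^{j}\rfloor, (i \bmod 2^{j})\text{-dependent}}$. In this construction each prefix of length $j$ of $x_i$ is determined by the bits of $i$ below position $j$ together with independent random masks indexed by nodes of a binary tree. Then $x_i$ is in fact exactly uniform, and the $x_i$ in a block are independent given their first $r$ digits.

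For a fixed $k$, set $r_k=\lceil\log_2 k\rceil$. The key computation is that on a full block of length $2^r$ with $2^r\ge$ roughly $k$ we have
\[\E\Big[\sum_{i\in B_{P,r}}\e{kx_i}\,\Big|\,\text{first $r$ digits}\Big]=\sum_{a<2^r}\e{ka2^{-r}}\cdot 2^r\!\!\int_0^{2^{-r}}\!\e{kt}\,dt .\]
This vanishes unless $2^r\mid k$, and in any case the centered fluctuations are a sum of $2^r$ independent bounded terms. For scales $r\ge r_k+1$ the conditional mean vanishes, because the first sum is zero when $2^r\nmid k$. Hoeffding then gives a block sum of size $O(\sqrt{2^r\,\lambda})$ with failure probability $e^{-c\lambda}$. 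For scales $r\le r_k$ we bound trivially by $2^r$. Writing $[0,N)$ as a disjoint union of at most one block per scale (the binary expansion of $N$), we get
\[|S_N(k)|\le \sum_{r\le r_k}2^r+\sum_{r>r_k}|\text{block}_r| .\]
The first sum is $O(k)$, which is too big. So the small scales must instead be handled by grouping: merge all scales $r\le r_k$ of $N$'s expansion into a single partial block inside one block of length $2^{r_k}\asymp k$, and control that partial sum directly.

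The main obstacle is exactly this: the large scales give a geometric sum only if we can afford $\lambda\asymp\log k$ uniformly over the relevant prefixes, and the small-scale remainder must be shown to be $O(\sqrt{k\log k})$ rather than $O(k)$. For the remainder, I would use the conditional independence again at scale $r_k$. Given the first $r_k$ digits, a prefix segment of a block of length $2^{r_k}$ is a sum of independent centered terms plus a deterministic main term $\sum_{a\in S}\e{ka2^{-r_k}}\widehat{\cdot}$ over the set $S$ of used digit-strings. The scrambling makes $S$ a union of dyadic sub-cubes of the digit tree, and the corresponding van der Corput-type sums are small for frequency $k$ because $|\e{k2^{-j}}-1|$-type cancellations only fail at the $O(1)$ finest levels. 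Since $S$ is hit deterministically, we can also randomize and apply Hoeffding over the masks. For the union bound, note that for a given $k$ and scale $r$ only prefixes $P$ with $P2^r\le N$ matter. We therefore take a weighted union bound with $\lambda_{r,P,k}=C(\log(2k)+\log(2+P)+r)$, and the $\log P$ term must be absorbed. I would try to restrict attention to the ``right prefix set'': each $N$ uses one block per scale, and bounding each block at scale $r$ by $\sqrt{2^r(\log k+r)}$ once $2^r\gg k$ is acceptable, since then $\log P$ is comparable to $\log N$. Here I expect to need a further trick, such as the triviality $|S_N(k)|\le N$ combined with a sharper count, and that is the step I would work hardest on. Summing $\sum_{r>r_k}\sqrt{2^r\cdot\text{stuff}}$ will also not converge naively, so the decisive point is that full large blocks contribute mean zero and their fluctuations must telescope via martingale (Freedman/Doob) maximal inequalities over $N$. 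We then finish by Borel–Cantelli over $k$ and choose a realization.
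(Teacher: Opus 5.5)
Your construction is essentially the paper's: the version in which the mask for digit $j$ of $x_i$ depends only on $i \bmod 2^j$, so that within a block $[P2^r,(P+1)2^r)$ the tails attached to different low words $w$ use disjoint sets of masks. Your conditional-mean computation is also right. However, the estimates you draw from it are not the ones that make the argument close, and the repairs you propose would not work.

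\textbf{The scale estimates.} First, the conditional mean vanishes at \emph{every} scale. When $2^r\mid k$, the factor $\int_0^{2^{-r}}e^{2\pi i kt}\,dt$ is zero. So blocks with $2^r\le 2k$ are also sums of $2^r$ independent, centered, bounded terms, and are $O(\sqrt{2^r\lambda})$. Summed over $r\le\lceil\log_2(2k)\rceil$, this is already $O(\sqrt{k\lambda})$. No grouping of small scales and no analysis of partial blocks is needed.

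Second, and this is the decisive missing idea, for $2^r>2k$ each centered summand is not merely bounded but of size $O(k/2^r)$. The term for $w$ is $\exp\bigl(2\pi i\tfrac{k}{2^r}(j_r(w)+T_w)\bigr)$ with $T_w\in[0,1)$, so the tail moves the phase by at most $2\pi k/2^r$. Bernstein then gives $|B_{P,r}(k)|\ll\sqrt{\lambda}\,k\,2^{-r/2}$, which decays geometrically in $r$. With $\lambda\asymp r+b$ and $b=\lceil\log_2(2k)\rceil$, one gets
$\sum_r\sqrt{r+b}\,\min\{2^{r/2},2^{b-r/2}\}\ll\sqrt{k\log(2k)}$.

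Your proposed substitute, Freedman/Doob maximal inequalities over $N$, cannot produce a bound uniform in $N$. Such an inequality controls $\max_{N\le X}|S_N(k)|$ through the variance accumulated up to $X$. If block fluctuations at scale $r$ are of size $2^{r/2}$, that variance is of order $X$. The smallness has to come from the increments themselves, not from how they are combined.

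\textbf{Uniformity in the block position.} The other gap, which you flag yourself, is uniformity in $P$. A union bound with $\lambda$ growing like $\log(2+P)$ produces block bounds that grow with $N$, so it cannot control $\sup_N$. The paper removes all dependence on $P$ by a finite-residue reduction.
\begin{enumerate}
\item Replacing $P$ by $Q=P\bmod 2^h$ alters only the digits of $T_{w,P}$ at depth $\ge h$. Hence $|T_{w,P}-T_{w,Q}|\le 2^{-h}$ and $|B_{P,r}(k)-B_{Q,r}(k)|\le 2\pi k\,2^{-h}$.
\item Take $2^h\approx 1+2^b/t$, where $t$ is the target bound at scale $(r,b)$. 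It then suffices to control $B_{Q,r}(k)$ for these few residues $Q$ and for the fewer than $2^b$ frequencies $k$ with $\lceil\log_2(2k)\rceil=b$.
\item A deviation level $\lambda=A^2(r+b)$ pays for this union bound, and the failure probabilities are summable over all pairs $(r,b)$ once $A$ is large.
\end{enumerate}
One fixed realization of the masks then works for every $k$, $r$ and $P$ simultaneously, so no Borel--Cantelli argument over $k$ is needed.
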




\subsection*{Example: \texorpdfstring{$N=2026$}{N=2026}}

Before giving the construction, it is helpful to see exactly how the final estimate will be organized.
Our sums are indexed from $0$, so we naturally decompose the interval $[0,N)=\{0,1,\dots,N-1\}$ into dyadic blocks.

For the concrete value
\[
2026 = 11111101010_2
=
2^{10}+2^9+2^8+2^7+2^6+2^5+2^3+2,
\]
the binary digits tell us exactly which dyadic block lengths appear.  Namely,
\[
[0,2026)
=
[0,2^{10})
\cup
[2^{10},2^{10}+2^9)
\cup
[2^{10}+2^9,2^{10}+2^9+2^8)
\cup \cdots \cup
[2024,2026),
\]
that is,
\[
\begin{array}{rcl}
[0,2026)
&=&
[0,1024)
\cup [1024,1536)
\cup [1536,1792)
\cup [1792,1920) \\[2mm]
&& {}\cup [1920,1984)
\cup [1984,2016)
\cup [2016,2024)
\cup [2024,2026).
\end{array}
\]

Each chunk above has the form
\[
[P2^r,(P+1)2^r)
\]
for some integers $P,r$.  Later we will attach to such a chunk a block sum $B_{P,r}(k)$, and Proposition~\ref{p987:prop:block-identity} will show that
\[
\sum_{n=P2^r}^{(P+1)2^r-1} \e{k x_n} = B_{P,r}(k),
\]
while Proposition~\ref{p987:prop:block-bound} will show that if we write
\[
b=b(k):=\lceil \log_2(2k)\rceil,
\]
then every chunk of length $2^r$ satisfies
\[
\left|\sum_{n=P2^r}^{(P+1)2^r-1} \e{k x_n}\right|
=
|B_{P,r}(k)|
\ll
\sqrt{r+b}\,\min\{2^{r/2},2^{b-r/2}\}.
\]
These terms switch behavior at the frequency scale $r=b$: for short blocks relative to $k$ one sees the factor $2^{r/2}$, while for long blocks one sees the decaying factor $2^{b-r/2}$.
Hence the main term is of order $\sqrt{r2^r}\asymp \sqrt{k\log k}$, with geometry decay in both directions.

The construction is easiest to understand on dyadic blocks.  Inside a block of length $2^r$, the first $r$ scrambled binary digits of the points run through a permutation of all $r$-bit words, while the remaining tail pieces become independent and uniformly distributed after conditioning on the randomness from shorter prefixes.  This turns each dyadic block sum into a sum of independent centered random variables.  Once those block sums are controlled uniformly, arbitrary partial sums are handled by decomposing the initial interval $[0,N)$ into dyadic blocks.
We note that this can be seen as a randomized improvement of Clunie's linear upper bound in \cite{Cl67}, which sets $x_1=1, x_2=-1$ and then uses the deterministic recursion 
\[
x_{a+2^r}
=
x_a e^{\pi i/2^r}
\]
when $2^r\geq a\geq 1$ and $x_a$ has already been defined.

\subsection{The binary scrambling}

Fix independent Bernoulli random variables
\[
\eta_u \in \bits,
\qquad
\Pbb(\eta_u = 0) = \Pbb(\eta_u = 1) = \tfrac12,
\]
indexed by all finite binary words $u$ (including the empty word $\varnothing$).

Write a nonnegative integer $n$ in binary as
\[
n = \sum_{i=0}^\infty d_i 2^i,
\qquad
d_i \in \bits.
\]
Read the binary digits from low significance to high significance.  At stage $i$ the prefix $d_0 \cdots d_{i-1}$ has already been seen, and the random bit $\eta_{d_0 \cdots d_{i-1}}$ decides whether to keep or flip $d_i$.  Thus define
\begin{equation}\label{p987:eq:xn}
x_n := \sum_{i=0}^\infty \frac{d_i \oplus \eta_{d_0 \cdots d_{i-1}}}{2^{i+1}},
\end{equation}
where $\oplus$ denotes addition mod $2$.

\begin{remark}
If all the $\eta_u$ were equal to $0$, then \eqref{p987:eq:xn} would be the binary van der Corput sequence.
\end{remark}

For a binary word $w = w_0 \cdots w_{r-1} \in \bits^r$, define
\begin{equation}\label{p987:eq:jrw}
j_r(w) := \sum_{i=0}^{r-1} \bigl(w_i \oplus \eta_{w_0 \cdots w_{i-1}}\bigr) 2^{r-1-i}.
\end{equation}
Thus $j_r(w)/2^r$ is the binary fraction whose first $r$ digits are the scrambled versions of the bits of $w$.

If $P = \sum_{\ell=0}^\infty p_\ell 2^\ell$ is a nonnegative integer, define the scrambled tail after the prefix $w$ by
\begin{equation}\label{p987:eq:tail}
T_{w,P} := \sum_{\ell=0}^\infty \frac{p_\ell \oplus \eta_{w p_0 \cdots p_{\ell-1}}}{2^{\ell+1}}.
\end{equation}
For $k \ge 1$ and $r \ge 0$, set
\begin{equation}\label{p987:eq:block-sum}
B_{P,r}(k) := \sum_{w \in \bits^r} \e{\frac{k}{2^r}\bigl(j_r(w) + T_{w,P}\bigr)}.
\end{equation}

\begin{lemma}\label{p987:lem:jr-bijection}
For each $r \ge 0$, the map $w \mapsto j_r(w)$ is a bijection from $\bits^r$ onto $\{0,1,\dots,2^r-1\}$.
\end{lemma}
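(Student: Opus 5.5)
Since $\bits^r$ and $\{0,1,\dots,2^r-1\}$ both have $2^r$ elements, it suffices to show that $w\mapsto j_r(w)$ is injective and takes values in the right range. The range claim is immediate from \eqref{p987:eq:jrw}: each summand $\bigl(w_i\oplus\eta_{w_0\cdots w_{i-1}}\bigr)2^{r-1-i}$ is either $0$ or $2^{r-1-i}$. Hence $j_r(w)$ is an integer with $0\le j_r(w)\le \sum_{i=0}^{r-1}2^{r-1-i}=2^r-1$. The case $r=0$ is trivial, since the only word is the empty one and $j_0(\varnothing)=0$.

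For injectivity, the point is that $j_r(w)$ has binary expansion with digits $c_i:=w_i\oplus\eta_{w_0\cdots w_{i-1}}$, where $c_i$ sits at position $r-1-i$. Binary expansions of integers in $[0,2^r)$ are unique, so $j_r(w)$ determines the digit sequence $(c_0,\dots,c_{r-1})$. We then recover $w$ from $(c_i)$ inductively from $i=0$ upward. At stage $i$, the prefix $w_0\cdots w_{i-1}$ is already known, so the bit $\eta_{w_0\cdots w_{i-1}}$ is known, and $w_i=c_i\oplus\eta_{w_0\cdots w_{i-1}}$ because $\oplus$ is invertible mod $2$. Thus $w$ is determined by $j_r(w)$, which gives injectivity, and together with the cardinality count this gives bijectivity.

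An alternative route, if one prefers it, is induction on $r$. This uses the identity $j_{r}(w)=2\,j_{r-1}(w_0\cdots w_{r-2})+\bigl(w_{r-1}\oplus\eta_{w_0\cdots w_{r-2}}\bigr)$. For fixed $w_0\cdots w_{r-2}$, the map $w_{r-1}\mapsto w_{r-1}\oplus\eta_{w_0\cdots w_{r-2}}$ is a bijection of $\bits$. So the pairs $\bigl(j_{r-1},\text{last digit}\bigr)$ range bijectively over $\{0,\dots,2^{r-1}-1\}\times\bits$, which corresponds to $\{0,\dots,2^r-1\}$.

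There is no real obstacle here. The only thing requiring care is the bookkeeping of digit positions: the first scrambled bit $c_0$ is the most significant digit, and the flip bit applied at each stage depends only on earlier bits of $w$, never on later ones. This triangular structure is exactly what makes the inductive decoding well defined.
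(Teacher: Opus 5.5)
Your proposal is correct and follows essentially the same route as the paper: read off the scrambled digits $w_i\oplus\eta_{w_0\cdots w_{i-1}}$ from the binary expansion of $j_r(w)$ and decode $w$ one bit at a time using the triangular dependence of the flip bits on earlier bits. Your explicit range check and cardinality count, which the paper leaves implicit, and the alternative induction via $j_r(w)=2\,j_{r-1}(w_0\cdots w_{r-2})+\bigl(w_{r-1}\oplus\eta_{w_0\cdots w_{r-2}}\bigr)$ are both valid.
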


\begin{proof}
The binary digits of $j_r(w)$ are precisely
\[
y_i := w_i \oplus \eta_{w_0 \cdots w_{i-1}}
\qquad (0 \le i < r),
\]
written from most to least significant.  Knowing the output bits $y_0,\dots,y_{r-1}$, one reconstructs $w_0$ from $y_0$ and $\eta_\varnothing$, then $w_1$ from $y_1$ and $\eta_{w_0}$, etc.  Thus $w$ is uniquely determined by $j_r(w)$.
\end{proof}

The point of \eqref{p987:eq:block-sum} is that it is exactly the exponential sum over a dyadic block of indices.

\begin{proposition}\label{p987:prop:block-identity}
Let $P \ge 0$ have binary expansion
\[
P = \sum_{\ell=0}^\infty p_\ell 2^\ell,
\qquad
p_\ell \in \bits,
\]
and let $0 \le m < 2^r$ have binary digits $m = \sum_{i=0}^{r-1} w_i 2^i$.  Then
\[
x_{P2^r + m} = \frac{j_r(w) + T_{w,P}}{2^r}.
\]
Consequently
\begin{equation}\label{p987:eq:dyadic-block-identity}
\sum_{n=P2^r}^{(P+1)2^r-1} \e{k x_n} = B_{P,r}(k).
\end{equation}
\end{proposition}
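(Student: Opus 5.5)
The plan is to compute $x_{P2^r+m}$ directly from definition \eqref{p987:eq:xn} and split its binary expansion into the first $r$ digits and the tail.

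Write $n=P2^r+m$. Since $0\le m<2^r$, the binary digits of $n$ are $d_i=w_i$ for $0\le i<r$ and $d_{r+\ell}=p_\ell$ for $\ell\ge 0$; no carries occur.

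Next, split the sum defining $x_n$ into the ranges $i<r$ and $i=r+\ell$ with $\ell\ge 0$. For $i<r$ the prefix $d_0\cdots d_{i-1}$ is $w_0\cdots w_{i-1}$, so these terms are
\[
\sum_{i=0}^{r-1}\frac{w_i\oplus\eta_{w_0\cdots w_{i-1}}}{2^{i+1}}.
\]
This equals $j_r(w)/2^r$ by \eqref{p987:eq:jrw}, because $2^{r-1-i}/2^r=2^{-(i+1)}$. For $i=r+\ell$ the prefix is the concatenation $w\,p_0\cdots p_{\ell-1}$, so these terms are
\[
\sum_{\ell\ge 0}\frac{p_\ell\oplus\eta_{wp_0\cdots p_{\ell-1}}}{2^{r+\ell+1}}=2^{-r}T_{w,P}
\]
by \eqref{p987:eq:tail}. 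All sums are absolutely convergent, so the splitting is legitimate. Adding the two pieces gives the claimed formula for $x_{P2^r+m}$.

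For \eqref{p987:eq:dyadic-block-identity}, note that as $n$ runs over $[P2^r,(P+1)2^r)$, the offset $m=n-P2^r$ runs over $[0,2^r)$. The map from $m$ to its digit word $w\in\bits^r$ is a bijection, so
\[
\sum_{n=P2^r}^{(P+1)2^r-1} \e{k x_n}=\sum_{w\in\bits^r}\e{\frac{k}{2^r}\bigl(j_r(w)+T_{w,P}\bigr)}=B_{P,r}(k).
\]
Lemma~\ref{p987:lem:jr-bijection} is not even needed for this step.

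There is no real obstacle. The only points needing care are the bookkeeping of the prefix indices for the tail digits, including the case $\ell=0$, where the prefix is $w$ itself, and the degenerate case $r=0$, where $w$ is the empty word, $j_0=0$, and $x_P=T_{\varnothing,P}$ directly from \eqref{p987:eq:xn}.
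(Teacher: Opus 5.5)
Your proposal is correct and follows the paper's proof exactly: identify the binary digits of $P2^r+m$ as $w_0,\dots,w_{r-1}$ followed by $p_0,p_1,\dots$, split the series defining $x_n$ into the first $r$ scrambled digits (giving $j_r(w)/2^r$) and the tail (giving $2^{-r}T_{w,P}$), then sum over the $2^r$ values of $m$. Your added remarks on absolute convergence, the $\ell=0$ and $r=0$ cases, and why Lemma~\ref{p987:lem:jr-bijection} is not needed are accurate.
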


\begin{proof}
The low $r$ binary digits of $P2^r + m$ are $w_0,\dots,w_{r-1}$, and the higher binary digits are $p_0,p_1,\dots$.  Therefore \eqref{p987:eq:xn} splits into the first $r$ scrambled digits and the remaining tail:
\[
x_{P2^r+m}
=
	\sum_{i=0}^{r-1}\frac{w_i \oplus \eta_{w_0 \cdots w_{i-1}}}{2^{i+1}}
	+ \sum_{\ell=0}^\infty
	\frac{p_\ell \oplus \eta_{w p_0 \cdots p_{\ell-1}}}{2^{r+\ell+1}}
	=
	\frac{j_r(w) + T_{w,P}}{2^r}.
	\]
Summing over the $2^r$ choices of $m$ is exactly \eqref{p987:eq:dyadic-block-identity}.
\end{proof}

\subsection{A uniform estimate for dyadic blocks}

The heart of the matter is a bound that is uniform in the block length, the block location, and the frequency scale.

\begin{proposition}[Uniform dyadic block estimate]\label{p987:prop:block-bound}
There is an absolute constant $A > 0$ and a deterministic choice of the bits $(\eta_u)$ such that the following holds.  For every $r \ge 0$, every integer $P \ge 0$, and every integer $k\ge1$, if
\[
b:=b(k)=\lceil \log_2(2k)\rceil,
\]
one has
\[
|B_{P,r}(k)|
\le
A \sqrt{r+b}\, \min\{2^{r/2}, 2^{b-r/2}\}.
\]
\end{proposition}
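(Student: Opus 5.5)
We will prove the bound by the probabilistic method: take the bits $(\eta_u)$ to be independent fair coins as above, show that for each fixed $(r,k)$ all the block sums $B_{P,r}(k)$ satisfy the bound except with tiny probability, and union bound over $(r,k)$. A positive-probability outcome gives the deterministic choice. The mechanism is the one in Lemma~\ref{p987:lem:jr-bijection} and Proposition~\ref{p987:prop:block-identity}. The integer $j_r(w)$ depends only on bits $\eta_u$ with $|u|<r$. The tail $T_{w,P}$ depends only on bits $\eta_u$ with $u$ a word of length $\ge r$ extending $w$. So after conditioning on $\mathcal F_r:=\sigma(\eta_u:|u|<r)$:
\begin{itemize}
\item the phases $c_w:=\e{k j_r(w)/2^r}$ are fixed;
\item the tails $T_{w,P}$, $w\in\bits^r$, are independent, since they use disjoint families of bits;
\item each tail is uniform on $[0,1)$, since its digits are fair and independent.
\end{itemize}
Hence $B_{P,r}(k)=\sum_w c_w Z_w$ with $Z_w=\e{kT_{w,P}/2^r}$ independent given $\mathcal F_r$.

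\emph{Centering.} All $Z_w$ share a common conditional mean $\mu=\E\,\e{kU/2^r}$ with $U$ uniform. There are two cases.
\begin{itemize}
\item If $2^r\mid k$, then $k/2^r$ is a nonzero integer, so $\mu=0$.
\item If $2^r\nmid k$, then, because $j_r$ is a bijection onto $\{0,\dots,2^r-1\}$, $\sum_w c_w=\sum_{j<2^r}\e{kj/2^r}=0$.
\end{itemize}
Either way $\E[B_{P,r}(k)\mid\mathcal F_r]=0$. We then apply Hoeffding's inequality to the real and imaginary parts of $\sum_w c_w(Z_w-\mu)$, using two range bounds:
\begin{itemize}
\item the trivial bound $|Z_w-\mu|\le 2$;
\item when $r>b$, i.e.\ $2^r>2k$, the bound $|Z_w-\mu|\le 2\pi k/2^r$, since the argument $kT/2^r$ ranges over an interval of length $k/2^r$.
\end{itemize}
This gives, with conditional probability $\ge 1-4e^{-\lambda}$,
\[
|B_{P,r}(k)|\ll \sqrt{\lambda}\,\min\{2^{r/2},\,k2^{-r/2}\}\ll\sqrt\lambda\,\min\{2^{r/2},2^{b-r/2}\}.
\]

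\emph{Union bound.} This is the main obstacle, because $P$ ranges over infinitely many values. The fix is to truncate the tail. Let $T^{(L)}_{w,P}$ keep only the first $L$ digits of $T_{w,P}$. Replacing $T$ by $T^{(L)}$ changes each summand by at most $2\pi k2^{-r-L}$, hence changes the block sum by at most $2\pi k2^{-L}$.
\begin{itemize}
\item For $r\le b$, take $L=b$; the error is $O(1)$.
\item For $r>b$, take $L=\lceil r/2\rceil+2$; the error is $O(k2^{-r/2})=O(2^{b-r/2})$.
\end{itemize}
In both cases the error is within the target bound. The truncated sum $B^{(L)}_{P,r}(k)$ depends only on $P\bmod 2^L$, so there are at most $2^L\le 2^{r+b}$ distinct truncated sums. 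Each truncated sum obeys the same Hoeffding estimate. The truncated tails remain independent across $w$ and uniform on the grid $2^{-L}\ZZ/\ZZ$. Centering still holds: in the case $2^r\nmid k$ we again use $\sum_w c_w=0$. In the case $2^r\mid k$, the integer $k/2^r$ satisfies $0<k/2^r<2^L$ (since $k<2^b\le 2^{L+r}$ when $r\le b$), so the grid average of $\e{(k/2^r)t}$ vanishes. The range bounds are unchanged.

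Take $\lambda=3(r+b)\log 2+O(1)$. The failure probability for fixed $(r,k)$ is then $\ll 2^{r+b}e^{-\lambda}\ll 2^{-2(r+b)}\ll k^{-2}2^{-2r}$. This is summable over $r\ge0$ and $k\ge1$, and by adjusting the $O(1)$ the total can be made less than $1$. On the complementary event, every $r$, $P$ and $k$ satisfies
\[
|B_{P,r}(k)|\le |B^{(L)}_{P,r}(k)|+O\bigl(\min\{2^{r/2},2^{b-r/2}\}\bigr)\le A\sqrt{r+b}\,\min\{2^{r/2},2^{b-r/2}\}.
\]
Fixing any $\eta$ in this event gives the proposition.
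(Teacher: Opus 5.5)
Your proof is correct and follows the paper's argument in all essentials. Both use random scrambling bits and condition on the $\eta_u$ with $|u|<r$. Both center via the bijection $j_r$, or via the vanishing mean when $2^r\mid k$. Both reduce $P$ modulo a power of $2$ (your tail truncation is the same as the paper's finite residue reduction), then apply concentration to each of finitely many block sums and take a union bound over scales.

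The one real difference is that you use Hoeffding where the paper uses Bernstein. This is legitimate here because each summand's range $\min\{2,2\pi k2^{-r}\}$ is already of order $\sigma 2^{-r/2}$, so you get a pure sub-Gaussian tail $e^{-cA^2(r+b)}$ and avoid the paper's Regime~II case analysis. The small price is that your truncated tails are uniform only on a grid, so you need the extra centering check $0<k/2^r<2^L$, which you carried out correctly.
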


\noindent Before proving Proposition~\ref{p987:prop:block-bound}, isolate the only place where the truncation length $h$ enters.  Fix a block location $P$, and let
\[
Q:=P \bmod 2^h,
\qquad 0\le Q<2^h.
\]
Then $P$ and $Q$ have the same lowest $h$ binary digits.  Since the block sum depends on $P$ only through those binary digits and the later tail they generate, this implies
\[
|B_{P,r}(k)-B_{Q,r}(k)|\ll 2^b2^{-h}
\]
when $b=b(k)$.  Thus to obtain accuracy $t$, it is enough to choose $h$ so that $2^b2^{-h}\lesssim t$ and then check only the finitely many residues $Q \in \{0,\dots,2^h-1\}$.  In the proof of Proposition~\ref{p987:prop:block-bound} this choice is made separately for each dyadic block scale $r$ and dyadic frequency scale $b$: after fixing $(r,b)$ we set
\[
t_{r,b}:=A\sqrt{r+b}\,\min\{2^{r/2},2^{b-r/2}\},
\qquad
h_{r,b}:=h(r,b,t_{r,b}),
\]
and apply the lemma with that value of $h_{r,b}$.

\begin{lemma}[Finite residue reduction]\label{p987:lem:tail-net-reduction}
There is an absolute constant $C_0>0$ with the following property.  Fix integers $r \ge 0$ and $b \ge 1$, and let $t>0$.  Define
\[
h=h(r,b,t):=\max\left\{0,\, b+1-\left\lfloor \log_2\!\left(\frac{t}{C_0}\right)\right\rfloor\right\},
\]
and for each integer $P\ge0$ let
\[
Q=Q_h(P):=P \bmod 2^h,
\qquad 0\le Q<2^h.
\]
Then for every integer $k\ge1$ with $b(k)=b$ one has
\[
|B_{P,r}(k)-B_{Q,r}(k)|\le t/2.
\]
Consequently, if
\[
|B_{Q,r}(k)|\le t/2
\]
for every integer $Q$ with $0\le Q<2^h$ and every integer $k\ge1$ with $b(k)=b$, then
\[
|B_{P,r}(k)|\le t
\]
for every integer $P\ge0$ and every integer $k\ge1$ with $b(k)=b$.
\end{lemma}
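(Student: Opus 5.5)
The proof is a termwise comparison of the two block sums \eqref{p987:eq:block-sum} for $P$ and $Q=P \bmod 2^h$. The terms are indexed by the same words $w\in\bits^r$, and they share the same head $j_r(w)$. So
\[
|B_{P,r}(k)-B_{Q,r}(k)|\le \sum_{w\in\bits^r}\left|\e{\tfrac{k}{2^r}T_{w,P}}-\e{\tfrac{k}{2^r}T_{w,Q}}\right|\le \sum_{w}2\pi \frac{k}{2^r}\,|T_{w,P}-T_{w,Q}|,
\]
using $|e^{i\alpha}-e^{i\beta}|\le|\alpha-\beta|$. The lifts of the tails to $[0,1]$ are compared as real numbers, which is legitimate since the exponent only sees them mod $1$ after multiplying by the integer-free factor $k/2^r$, and we bound the actual real difference.

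The key step is bounding the tail difference. $P$ and $Q$ agree in binary digits $p_0,\dots,p_{h-1}$. In \eqref{p987:eq:tail} the $\ell$-th summand depends only on $p_\ell$ and on $\eta_{wp_0\cdots p_{\ell-1}}$, that is, on $p_0,\dots,p_\ell$. Hence the first $h$ summands ($\ell<h$) coincide for $P$ and $Q$, and the difference is at most $\sum_{\ell\ge h}2^{-\ell-1}=2^{-h}$. Summing over the $2^r$ words gives
\[
|B_{P,r}(k)-B_{Q,r}(k)|\le 2^r\cdot 2\pi\frac{k}{2^r}2^{-h}=2\pi k2^{-h}\le 2\pi\, 2^{b-1}2^{-h},
\]
since $b=\lceil\log_2(2k)\rceil$ gives $2k\le 2^b$. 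Note the factor $2^r$ cancels, so the bound is uniform in $r$, which is the point of the lemma.

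The remaining step is choosing $C_0$. We need $\pi 2^{b}2^{-h}\le t/2$, i.e.\ $2^{h}\ge 2\pi 2^b/t$. If $h=b+1-\lfloor\log_2(t/C_0)\rfloor\ge 0$, then $2^h\ge 2^{b+1}C_0/t$, so $C_0=4$ (any $C_0\ge\pi$) suffices. In the case $h=0$ (so $Q=0$ mode is trivial, $2^h=1$ and $Q=0$), the max forces $\lfloor\log_2(t/C_0)\rfloor\ge b+1$, hence $t\ge C_0 2^{b+1}$. Then the crude bound $|B_{P,r}-B_{0,r}|\le 2\pi k2^{0}\le \pi 2^b\le t/2$ still holds; the same computation covers it uniformly. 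The "consequently" clause then follows by the triangle inequality: $|B_{P,r}(k)|\le |B_{Q,r}(k)|+t/2\le t$.

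I expect no real obstacle. The only care points are the digit-dependence check for the tail (the $\eta$-index for digit $\ell$ involves only $p_0,\dots,p_{\ell-1}$) and handling the floor and the $\max\{0,\cdot\}$ so that the constant works in every case.
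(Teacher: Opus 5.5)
Your proof is correct and is essentially the paper's argument: $P$ and $Q$ share their first $h$ binary digits, so the tails satisfy $|T_{w,P}-T_{w,Q}|\le 2^{-h}$; the $2\pi$-Lipschitz bound for $x\mapsto e^{2\pi i x}$, summed over the $2^r$ words, gives $2\pi k2^{-h}\le \pi 2^b 2^{-h}$; the definition of $h$ (any $C_0\ge\pi$ works) makes this at most $t/2$, and the triangle inequality finishes. Two cosmetic points: the case split on the $\max$ is unnecessary, since $h\ge b+1-\log_2(t/C_0)$ always; and your aside that the exponent ``only sees the tails mod $1$'' is not accurate when $k/2^r\notin\mathbb{Z}$, though it is also not needed, because $T_{w,P}$ is defined as a real number by the series.
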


\begin{proof}
Fix an integer
\[
P=\sum_{\ell=0}^\infty p_\ell 2^\ell,
\]
and an integer $k\ge1$ with $b(k)=b$.  Write
\[
Q=\sum_{\ell=0}^\infty q_\ell 2^\ell,
\]
where $Q=Q_h(P)=P\bmod 2^h$.  Then $q_\ell=p_\ell$ for $0\le \ell<h$, while $q_\ell=0$ for $\ell\ge h$.
Then for every $w\in\bits^r$,
\[
\left|T_{w,P}-T_{w,Q}\right|\le 2^{-h}.
\]
Since $x\mapsto \e{x}$ is $2\pi$-Lipschitz,
\[
\left|
\e{\frac{k}{2^r}(j_r(w)+T_{w,P})}
-
\e{\frac{k}{2^r}(j_r(w)+T_{w,Q})}
\right|
\le
2\pi \frac{k}{2^r} 2^{-h}
\le
C_0 2^{b-r-h}.
\]
Summing over the $2^r$ values of $w$, this gives
\[
|B_{P,r}(k)-B_{Q,r}(k)|\le C_0 2^b 2^{-h}\le t/2.
\]
By hypothesis,
\[
|B_{Q,r}(k)|\le t/2.
\]
Therefore
\[
|B_{P,r}(k)|
\le
|B_{Q,r}(k)|+|B_{P,r}(k)-B_{Q,r}(k)|
\le
t.
\qedhere
\]
\end{proof}

\noindent In the proof of Proposition~\ref{p987:prop:block-bound}, Lemma~\ref{p987:lem:tail-net-reduction} is applied separately for each pair $(r,b)$, with
\[
t=t_{r,b}:=A\sqrt{r+b}\,\sigma,
\]
\[
h=h_{r,b}:=\max\left\{0,\, b+1-\left\lfloor \log_2\!\left(\frac{t_{r,b}}{C_0}\right)\right\rfloor\right\}.
\]
So there is no single global truncation length: the value of $h$ changes from scale to scale, and after the reduction its only effect is that only the $2^{h_{r,b}}$ residues modulo $2^{h_{r,b}}$ remain to be checked.

\begin{proof}[Proof of Proposition~\ref{p987:prop:block-bound}]
Fix $r \ge 0$ and $b \ge 1$.  Write
\[
M := r+b,
\qquad
\sigma := \min\{2^{r/2}, 2^{b-r/2}\},
\qquad
t := t_{r,b}:= A \sqrt{M}\,\sigma.
\]
\[
h_{r,b}:=\max\left\{0,\, b+1-\left\lfloor \log_2\!\left(\frac{t}{C_0}\right)\right\rfloor\right\}.
\]
Thus the present scale pair $(r,b)$ is assigned its own truncation length $h_{r,b}$.  If $A$ is large enough, then the probability that the stated estimate fails for this pair $(r,b)$ is summable over all $(r,b)$.

\smallskip
\noindent\textit{Step 1: finite residue reduction.}
Let $C_0$ be the constant from Lemma~\ref{p987:lem:tail-net-reduction}.  By the definition of $h_{r,b}$,
\[
C_0 2^b 2^{-h_{r,b}}\le t/2.
\]
Then
\begin{equation}\label{p987:eq:h-size}
2^{h_{r,b}} \le 1 + \frac{4C_0 2^b}{t}.
\end{equation}
	Let
\[
\mathcal R_{r,b}:=\{0,1,\dots,2^{h_{r,b}}-1\}.
\]
By Lemma~\ref{p987:lem:tail-net-reduction}, it is enough to prove
\[
|B_{Q,r}(k)|\le t/2
\]
for every $Q\in \mathcal R_{r,b}$ and every integer $k\ge1$ with $b(k)=b$.  In other words, Step 1 replaces the original supremum over all block locations $P\ge0$ by the finitely many residues modulo $2^{h_{r,b}}$.  After this reduction, the rest of the argument fixes one residue class $Q$ and proves a Bernstein bound for that fixed block sum.  The parameter $h_{r,b}$ will not reappear except through the cardinality bound
\begin{equation}\label{p987:eq:residue-count}
|\mathcal R_{r,b}|=2^{h_{r,b}} \le 1+\frac{4C_0 2^b}{t}.
\end{equation}

\smallskip
\noindent\textit{Step 2: conditioning on the short prefixes.}
Fix $Q \in \mathcal R_{r,b}$ and an integer $k\ge1$ with $b(k)=b$.  Write
\[
Q=\sum_{\ell=0}^\infty q_\ell 2^\ell,
\qquad q_\ell\in\bits,
\]
and let $\mathcal F_{<r}$ be the sigma-field generated by all $\eta_u$ with $|u| < r$.

For each $w \in \bits^r$, the random variable $T_{w,Q}$ depends on the bits
\[
\eta_w,\ \eta_{w q_0},\ \eta_{w q_0 q_1},\ \dots .
\]
These index sets are disjoint for different $w$, so the family $(T_{w,Q})_{w \in \bits^r}$ is conditionally independent given $\mathcal F_{<r}$.
Moreover each $T_{w,Q}$ is conditionally uniform on $[0,1)$, because its binary digits are independent fair bits.

Set
\[
R := 2^r,
\qquad
\theta := \frac{2 \pi k}{R},
\qquad
a := \E(e^{i \theta U}),
\]
where $U$ is uniform on $[0,1)$.  Define
\[
Y_w := e^{i \theta T_{w,Q}},
\qquad
Z_w := \e{\frac{k}{R} j_r(w)} (Y_w-a).
\]
Given $\mathcal F_{<r}$, the variables $Z_w$ are conditionally independent and satisfy $\mathbb E(Z_w\mid \mathcal F_{<r})=0$.

It is claimed that
\begin{equation}\label{p987:eq:sum-Z}
B_{Q,r}(k) = \sum_{w \in \bits^r} Z_w.
\end{equation}
Indeed,
\[
\sum_{w \in \bits^r} \e{\frac{k}{R}(j_r(w)+T_{w,Q})}
=
\sum_{w \in \bits^r} Z_w
+
a \sum_{w \in \bits^r} \e{\frac{k}{R} j_r(w)}.
\]
By Lemma~\ref{p987:lem:jr-bijection},
\[
\sum_{w \in \bits^r} \e{\frac{k}{R} j_r(w)}
=
\sum_{j=0}^{R-1} \e{\frac{k j}{R}}.
\]
This geometric sum vanishes unless $R \mid k$.  In the exceptional case $R \mid k$, one has $\theta \in 2\pi \mathbb{Z} \setminus \{0\}$, hence $a = \int_0^1 e^{i\theta u}\,du = 0$.  So \eqref{p987:eq:sum-Z} follows.

\smallskip
\noindent\textit{Step 3: bounds for the summands.}
If $r \le b$, then trivially $|Z_w| \le 2$.  If $r>b$, then $|\theta| < 2 \pi$, so
\[
|Y_w-a|
\le
|Y_w-1| + |1-a|
\le
|\theta| + \E|e^{i\theta U}-1|
\le
C |\theta|
\le
C 2^{b-r}.
\]
Thus in every case
\begin{equation}\label{p987:eq:Zw-bound}
|Z_w| \le C \min\{1,2^{b-r}\} = C \sigma 2^{-r/2}.
\end{equation}

Next, using an independent copy $U'$ of $U$,
\[
\operatorname{Var}(Y_w)
=
\frac12 \E |e^{i\theta U} - e^{i\theta U'}|^2.
\]
If $r \le b$, this is at most $1$.  If $r>b$, then $|\theta|<2\pi$ and
\[
|e^{i\theta U} - e^{i\theta U'}|
\le
|\theta|\, |U-U'|,
\]
so $\operatorname{Var}(Y_w) \le C \theta^2 \le C 2^{2(b-r)}$.  Therefore
\[
\sum_{w \in \bits^r} \E\bigl(|Z_w|^2 \mid \mathcal F_{<r}\bigr)
=
2^r \operatorname{Var}(Y_w)
\le
C \min\{2^r, 2^{2b-r}\}
=
C \sigma^2.
\]

\smallskip
\noindent\textit{Step 4: Bernstein's inequality.}
Apply Bernstein's inequality to the real and imaginary parts of $\sum_w Z_w$.  Using \eqref{p987:eq:Zw-bound} and the variance bound above gives
	\begin{equation}\label{p987:eq:bernstein}
	\Pbb\!\left(
	\left| B_{Q,r}(k) \right| \ge \frac{t}{2}
	\ \middle|\
	\mathcal F_{<r}
	\right)
\le
4 \exp\!\left(
- \frac{c t^2}{\sigma^2 + \sigma 2^{-r/2} t}
\right)
\le
4 \exp\!\left(
- \frac{c A^2 M}{1 + C A \sqrt{M}\, 2^{-r/2}}
\right).
\end{equation}
This bound is deterministic, so it also holds without conditioning on $\mathcal F_{<r}$. Next define the event
	\[
	E_{r,b}=\Big\{\text{there exist }Q\in\mathcal R_{r,b}\text{ and }k\ge1\text{ with }b(k)=b\text{ such that }|B_{Q,r}(k)|\ge t/2\Big\}.
	\]
	If $E_{r,b}$ does not occur, then the conclusion of Lemma~\ref{p987:lem:tail-net-reduction} holds, so the desired estimate
	\[
	|B_{P,r}(k)|\le t
	\]
	holds for every integer $P\ge0$ and every integer $k\ge1$ with $b(k)=b$.  To bound $\Pbb(E_{r,b})$, write
	\[
	\Pbb(E_{r,b})
	\le
	\sum_{\substack{k\ge1\\ b(k)=b}}\ \sum_{Q\in\mathcal R_{r,b}}
	\Pbb\bigl(|B_{Q,r}(k)|\ge t/2\bigr).
	\]
Now $h_{r,b}$ enters only through the number of residues.  Using \eqref{p987:eq:residue-count}, the Bernstein bound above, and the fact that there are fewer than $2^b$ such integers $k$, this gives
\begin{equation}\label{p987:eq:failure-probability}
\Pbb(E_{r,b})
\le
C 2^b \left(1 + \frac{2^b}{t}\right)
\exp\!\left(
- \frac{c A^2 M}{1 + C A \sqrt{M}\, 2^{-r/2}}
\right).
\end{equation}

\smallskip
\noindent\textit{Step 5: summing over $(r,b)$.}
We split the sum into two regimes.

\smallskip
\noindent\textbf{Regime I:} $A \sqrt{M}\, 2^{-r/2} \le 1$.
Then the denominator in the exponent in \eqref{p987:eq:failure-probability} is bounded by an absolute constant.  Moreover, if $r \le b$ then $2^b/t \le 2^b$, while if $r>b$ then
\[
\frac{2^b}{t} = \frac{2^{r/2}}{A\sqrt{M}} \le 2^{r/2}.
\]
Hence
\[
\log\!\left(2^b \left(1 + \frac{2^b}{t}\right)\right) \le C M.
\]
Hence
\[
\Pbb(E_{r,b}) \le e^{-(cA^2-C)M}.
\]
Since for each $M$ there are only $M$ pairs $(r,b)$ with $r+b=M$, the total contribution of Regime I is finite, and it can be made arbitrarily small by choosing $A$ large.

\smallskip
\noindent\textbf{Regime II:} $A \sqrt{M}\, 2^{-r/2} > 1$.
If $r \le b$, then $\sigma = 2^{r/2}$ and
\[
t = A \sqrt{M}\, 2^{r/2} > 2^r.
\]
Since $|B_{P,r}(k)| \le 2^r$ trivially, failure is impossible in this subcase.

It remains to consider the case $r>b$.  Here $M<2r$, and the inequality $A \sqrt{M}\, 2^{-r/2} > 1$ implies
\[
2^r < A^2 M < 2A^2 r.
\]
Hence $r = O(\log A)$, so there are only $O((\log A)^2)$ such pairs $(r,b)$.  For each of them, the prefactor in \eqref{p987:eq:failure-probability} is at most $e^{C r} = A^{O(1)}$, while the exponent satisfies
\[
\frac{c A^2 M}{1 + C A \sqrt{M}\, 2^{-r/2}}
\ge
c' A \sqrt{M}\, 2^{r/2}
\ge
c' A.
\]
Therefore the total contribution of this regime also tends to $0$ as $A \to \infty$.

Choosing $A$ sufficiently large makes
\[
\sum_{r \ge 0} \sum_{b \ge 1} \Pbb(E_{r,b}) < 1.
\]
So there is a realization of the random bits for which no event $E_{r,b}$ occurs.  Fix such a realization.  The argument above then yields, as desired:
	\[
	|B_{P,r}(k)| \ll \sqrt{r+b}\,\min\{2^{r/2},2^{b-r/2}\}.
  \qedhere
	\]
\end{proof}

\subsection{From dyadic blocks to arbitrary partial sums}

\begin{proof}[Proof of Theorem~\ref{p987:thm:main}]
Fix $k \ge 1$, and set
\[
b:=b(k)=\lceil \log_2(2k)\rceil.
\]
Let $L \ge 1$.  Write the binary expansion of $L$ as
\[
L = 2^{r_1} + 2^{r_2} + \cdots + 2^{r_s},
\qquad
r_1 > r_2 > \cdots \ge 0.
\]
Then the interval $[0,L)$ is a disjoint union of dyadic blocks
\[
I_j = [P_j 2^{r_j}, (P_j+1)2^{r_j})
\qquad
(1 \le j \le s)
\]
for suitable integers $P_j$.

By Proposition~\ref{p987:prop:block-identity},
\[
S_L(k)
=
\sum_{j=1}^s B_{P_j,r_j}(k).
\]
Applying Proposition~\ref{p987:prop:block-bound} and using that the exponents $r_j$ are distinct gives
\[
|S_L(k)|
\le
A \sum_{r=0}^\infty \sqrt{r+b}\, \min\{2^{r/2}, 2^{b-r/2}\}.
\]
Split the sum at $r=b$.  For $0 \le r \le b$,
\[
\sum_{r=0}^b \sqrt{r+b}\, 2^{r/2}
\le
\sqrt{2b}\sum_{r=0}^b 2^{r/2}
\ll
2^{b/2}\sqrt{b}.
\]
For $r=b+s$ with $s \ge 1$,
\[
\sum_{s=1}^\infty \sqrt{2b+s}\, 2^{b/2} 2^{-s/2}
\ll
2^{b/2}\sqrt{b}\sum_{s=1}^\infty (1+s)^{1/2} 2^{-s/2}
\ll
2^{b/2}\sqrt{b}.
\]
Hence
\[
|S_L(k)| \ll 2^{b/2}\sqrt{b}.
\]
Since $2^{b/2} \le \sqrt{2k}$ and $b \le \log_2(2k)$, this gives
\[
|S_L(k)| \ll \sqrt{k \log(2k)}.
\]
Since the bound is uniform in $L$, this is exactly the statement of the theorem.
\end{proof}

\section[Problem 1091]{Chord-bounded 4-chromatic graphs with all small subgraphs 3-colorable}\label{sec:1091}

This problem asks for graphs with chromatic number $4$, such that all small subgraphs are $3$-colorable, and all odd-length subcycles have a bounded number of chords.
In fact any proper subgraph of our construction is $2$-degenerate, and every cycle of even or odd length has at most $10$ chords.
The construction uses a caterpillar graph composed of pentagonal blocks.


\subsection{Introduction}

For a graph $G$ and a cycle $C\subseteq G$, write $\ch_G(C)$ for the number of edges of $G$ joining two nonconsecutive vertices of $C$.
These edges are often called \emph{chords} or \emph{diagonals} of $C$.

The classical starting point is the theorem of Voss \cite{Vo82}, building on Larson \cite{La79}, that every $K_4$-free $4$-chromatic graph contains an odd cycle with at least two chords.
A natural quantitative strengthening, formulated appearing as \cite[Problem~1091]{BloWeb}, asks:

\medskip
\noindent
\emph{Does there exist a function $f(r)\to\infty$ such that every $4$-chromatic graph $G$ for which every subgraph on at most $r$ vertices is $3$-colorable contains an odd cycle $C$ with $\ch_G(C)\ge f(r)$?}
\medskip

The purpose of this section is to show that the answer is no.
In fact the construction below gives an explicit family of $K_4$-free counterexamples.

\begin{theorem}\label{p1091:thm:main}
For every integer $m\ge1$ there exists an explicit $K_4$-free graph $G_m$ on
$20m+31$ vertices such that:
\begin{enumerate}
    \item $\chi(G_m)=4$;
    \item every proper subgraph $H\subsetneq G_m$ is $3$-colorable (in fact $2$-degenerate);
    \item every cycle $C$ in $G_m$ satisfies
    $
    \ch_{G_m}(C)\le10.
    $
\end{enumerate}
\end{theorem}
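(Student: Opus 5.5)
The plan is to build $G_m$ as a long chain (a ``caterpillar'') of identical pentagonal gadgets, capped at the two ends by fixed end gadgets. The vertex count $20m+31$ suggests $m$ middle units of $20$ vertices each, plus $31$ vertices for the two end caps and the joints. Consecutive units will be glued along a $2$-vertex separator $\{u_i,v_i\}$ on the spine. Each pentagonal gadget will behave like a half of an odd wheel: it is a $5$-cycle plus a hub-type vertex attached to the spine. The point of this shape is that any $3$-coloring of a unit is rigid at its two interfaces. I would first write the vertex and edge lists explicitly and record two facts: $\delta(G_m)=3$, and $|E(G_m)|=2|V(G_m)|-2$. The edge count is the count forced for a graph that is not $2$-degenerate but all of whose proper subgraphs are. $K_4$-freeness is then a local check, since every triangle lies inside a single unit or a single joint, and no vertex of a pentagon is adjacent to three mutually adjacent vertices.

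\textbf{Coloring properties.} For (2), I would show that for every edge $e$ (and hence for every proper subgraph) $G_m-e$ is $2$-degenerate, by exhibiting a peeling order. Deleting $e$ drops one or two vertices to degree $\le 2$. Removing them creates new degree-$\le 2$ vertices in the same unit. Once a unit is fully peeled, its neighbors across the $2$-cuts lose degree, so the peeling propagates outward along the chain in both directions until the end caps are consumed. This needs one ``propagation lemma'' for a unit and one for each end cap, each checked by hand. It immediately gives that every proper subgraph is $3$-colorable and that $\chi(G_m)\le 4$: color $G_m-e$ with three colors and give one endpoint of $e$ a fourth color. For $\chi(G_m)\ge 4$, I would prove a transfer lemma. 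In any proper $3$-coloring of a unit, whether $u_i,v_i$ receive equal or distinct colors determines the same relation at $\{u_{i+1},v_{i+1}\}$, using that an odd pentagon around a hub cannot be $2$-colored. Then each end cap forces a relation incompatible with the one propagated from the other end. Both the transfer lemma and the end-cap facts are finite checks.

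\textbf{Chord bound.} For (3), let $C$ be a cycle. A cycle meeting two units that are not adjacent must pass through every intermediate $2$-cut $\{u_i,v_i\}$ and use both of its vertices. Hence inside each intermediate unit $C$ is a single $u_i$--$v_i$ path, entering through one cut vertex and leaving through the other. I would design the units so that every $u_i$--$v_i$ path through a unit either uses no chord-producing pair, or is the spine edge path, whose nonconsecutive vertices are nonadjacent in that unit. Under this design, chords of $C$ can only occur in the (at most two) units where $C$ turns around, plus the end caps. A direct count there should give at most $10$. I expect this to be the main obstacle: I must make sure that a long cycle cannot collect one chord per unit. So the unit should be chosen so that the edges between a unit's pentagon and the spine are never chords of a through-path. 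If the first design fails this, I would first try shifting which pentagon vertices attach to the spine, so that every through-path skips all vertices attached to the pentagon's far side. Then I would recheck $K_4$-freeness and the transfer lemma for the modified unit.
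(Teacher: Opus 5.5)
There is a genuine gap. You never define $G_m$, and for an existence statement like this the construction is essentially the whole proof. Every ingredient (the transfer lemma, the end-cap facts, the peeling lemmas, the chord count) is deferred to finite checks on a gadget that is never specified. The step you yourself flag as the main obstacle, the chord bound, is left open, with a fallback plan in case the first design fails.

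Worse, the structural claim you intend to use there is false. If consecutive units are separated by distinct $2$-cuts, a cycle meeting two non-adjacent units crosses each intermediate unit as \emph{two} vertex-disjoint strands. Each strand runs from a vertex of the left separator to a vertex of the right separator; neither is a single path between the two vertices of one separator. So a long cycle passes through every unit twice in parallel. With a separate hub of degree at least $4$ in each unit, nothing in your plan stops the cycle from picking up a chord in every unit, either between the strands or from a hub to a non-consecutive pentagon neighbor on its strand. That would make $\ch_{G_m}(C)$ unbounded.

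Separately, $|E|=2|V|-2$ is only an upper bound, not a forced count. Any connected cubic graph fails to be $2$-degenerate while all its proper subgraphs are $2$-degenerate, and the paper's $G_m$ has $36m+55$ edges rather than $40m+60$.

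The missing idea is to use \emph{one} hub for the whole graph. The paper joins spine pentagons $S_0,\dots,S_m$ by single edges $S_i[c]S_{i+1}[a]$. It hangs leaf pentagons off the other spine vertices by single edges, so $G_m-v$ is a tree of pentagons whose inter-block edges are all bridges. It then adds one vertex $v$ adjacent to the four non-attachment vertices of every leaf pentagon. In your language, this is a chain glued along the $2$-cuts $\{v,S_i[c]\}$, which all share $v$. The second strand therefore collapses to the single vertex $v$, which a cycle visits at most once.

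With this design the chord bound is straightforward:
\begin{itemize}
\item A cycle avoiding $v$ is a single pentagon, so it has no chords.
\item A cycle through $v$ is $v$ plus a path in $G_m-v$ whose blocks form a path in the block tree, so it meets at most two leaf blocks.
\item Each of those two leaf blocks contributes at most four chords: at most three edges from $v$, plus one rim edge.
\item Each of the two end spine blocks contributes at most one rim edge.
\item Each internal spine block is crossed from $a$ to $c$, which are nonadjacent, so it contributes nothing.
\end{itemize}
This gives at most $4+4+1+1=10$ chords.

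The same design makes the rest short. Each leaf pentagon forces its attachment vertex to receive $v$'s color. This forces each $S_i[c]$ with $i<m$ to receive $v$'s color, propagating along the spine. In $S_m$ the leaf attachments then force $S_m[a]$ to receive $v$'s color as well, which is a contradiction because $S_m[a]$ is adjacent to $S_{m-1}[c]$. Finally, every vertex except $v$ has degree $3$ and $G_m-v$ is connected. Hence every nonempty proper subgraph has a vertex of degree at most $2$, and no peeling-propagation lemma is needed.
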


The graph is built as a caterpillar of pentagonal ``blocks'' and an additional vertex $v$ which is connected to any vertex of degree $2$ in the original construction. The fact that $G$ has chromatic number $4$ can be proven by considering the color of $v$ and propagating how it forces each pentagon to be colored.

Throughout the section, an \emph{inter-block edge} means an edge joining two different pentagon blocks in the graph $G_m^0$; the edges from $v$ to the leaf blocks are not inter-block edges.
Figure~\ref{p1091:fig:local-gadget} shows the local structure: each block is a $5$-cycle with labelled vertices, and each leaf block is connected to a unique spine block by one such inter-block edge.
A single extra vertex $v$ is adjacent to the four non-spine-adjacent vertices of every leaf block.
Figure~\ref{p1091:fig:block-tree} shows the global structure: the spine blocks form a path $S_0,S_1,\dots,S_m$, and the leaf blocks are attached to specific labelled vertices of these spine blocks.

\subsection{The construction}

Fix $m\ge1$.
We first describe the \emph{block tree}.
Its spine blocks are
\[
S_0,S_1,\dots,S_m.
\]
Attach to $S_0$ the four leaf blocks
\[
L_{0,a},L_{0,b},L_{0,d},L_{0,e},
\]
to each internal spine block $S_i$ with $1\le i\le m-1$ attach three leaf blocks
\[
L_{i,b},L_{i,d},L_{i,e},
\]
and to $S_m$ attach four leaf blocks
\[
L_{m,b},L_{m,c},L_{m,d},L_{m,e}.
\]
Thus the block tree consists of
$
(m+1)+4+3(m-1)+4=4m+6
$
total blocks (including both spine and leaf).
Each spine block $S_i$ is then replaced by a $5$-cycle whose vertices are labelled cyclically by
\[
a,b,c,d,e,
\]
and each leaf block $L_{i,x}$ is replaced by a $5$-cycle whose vertices are labelled cyclically by
\[
A,B,C,D,E.
\]
These labels are local to each block.
For the spine blocks, write $S_i[x]$ with $x\in\{a,b,c,d,e\}$, and for the leaf blocks, write $L_{i,x}[Y]$ with $x\in\{a,b,c,d,e\}$ and $Y\in\{A,B,C,D,E\}$.

Whenever $x\in\{a,b,c,d,e\}$, let $X$ denote the same letter in uppercase.
Then the leaf block $L_{i,x}$ is attached to $S_i$ by the inter-block edge
\[
S_i[x]L_{i,x}[X].
\]
For instance, $L_{i,e}$ is attached by the inter-block edge $S_i[e]L_{i,e}[E]$.
Along the spine, join $S_i[c]$ to $S_{i+1}[a]$ for each $0\le i<m$; these spine-to-spine edges are also inter-block edges.
Finally add one extra vertex $v$ and, for every leaf block $L_{i,x}$, join $v$ to the four vertices $L_{i,x}[Y]$ with $Y\neq X$.
These edges incident to $v$ are ordinary edges of $G_m$, but they are not inter-block edges.

With these attachment rules, every vertex in a leaf pentagon has exactly one neighbor outside that pentagon: the attachment vertex $L_{i,x}[X]$ is joined to $S_i[x]$, while each of the other four vertices $L_{i,x}[Y]$ is joined to $v$.
Likewise every vertex in a spine pentagon is incident to exactly one inter-block edge, either to a neighboring spine block or to a leaf block.
Hence every vertex of $G_m$ except $v$ has degree exactly $3$.

Denote the resulting graph by $G_m$, and write
\[
G_m^0:=G_m\backslash\{v\}.
\]
Thus $G_m^0$ is a tree of pentagons joined by inter-block edges.

\begin{figure}[!htbp]
\centering
\begin{tikzpicture}[
  scale=
  1
  ,transform shape,
    blockv/.style={circle,draw,fill=white,minimum size=4.4mm,inner sep=0pt,font=\scriptsize},
    vtx/.style={ellipse,draw,fill=red!8,minimum width=13mm,minimum height=6mm,inner sep=1pt,font=\small},
    spineedge/.style={line width=1pt,blue!70!black},
    vedge/.style={line width=0.6pt,red!70!black,densely dashed,opacity=0.45},
    callout/.style={font=\scriptsize,text=black,fill=white,inner sep=1pt}
]
\begin{scope}[shift={(-4.7,3.55)}]
  \node[blockv] (Pa) at (90:1.0) {$a$};
  \node[blockv] (Pb) at (18:1.0) {$b$};
  \node[blockv] (Pc) at (-54:1.0) {$c$};
  \node[blockv] (Pd) at (-126:1.0) {$d$};
  \node[blockv] (Pe) at (162:1.0) {$e$};
  \draw[line width=1pt,fill=blue!8] (Pa.center)--(Pb.center)--(Pc.center)--(Pd.center)--(Pe.center)--cycle;
  \foreach \n/\lab in {Pa/a,Pb/b,Pc/c,Pd/d,Pe/e} \node[blockv] at (\n) {$\lab$};
  \node[font=\small] at (0,-1.65) {$S_{i-1}$};
\end{scope}

\begin{scope}[shift={(0,2.65)}]
  \node[blockv] (Sa) at (90:1.2) {$a$};
  \node[blockv] (Sb) at (18:1.2) {$b$};
  \node[blockv] (Sc) at (-54:1.2) {$c$};
  \node[blockv] (Sd) at (-126:1.2) {$d$};
  \node[blockv] (Se) at (162:1.2) {$e$};
  \draw[line width=1pt,fill=blue!8] (Sa.center)--(Sb.center)--(Sc.center)--(Sd.center)--(Se.center)--cycle;
  \node[blockv] at (Sa) {$a$};
  \node[blockv] at (Sb) {$b$};
  \node[blockv] at (Sc) {$c$};
  \node[blockv] at (Sd) {$d$};
  \node[blockv] at (Se) {$e$};
  \node[font=\small] at (0,-1.95) {$S_i$};
\end{scope}

\begin{scope}[shift={(4.7,3.55)}]
  \node[blockv] (Na) at (90:1.0) {$a$};
  \node[blockv] (Nb) at (18:1.0) {$b$};
  \node[blockv] (Nc) at (-54:1.0) {$c$};
  \node[blockv] (Nd) at (-126:1.0) {$d$};
  \node[blockv] (Ne) at (162:1.0) {$e$};
  \draw[line width=1pt,fill=blue!8] (Na.center)--(Nb.center)--(Nc.center)--(Nd.center)--(Ne.center)--cycle;
  \foreach \n/\lab in {Na/a,Nb/b,Nc/c,Nd/d,Ne/e} \node[blockv] at (\n) {$\lab$};
  \node[font=\small] at (0,-1.65) {$S_{i+1}$};
\end{scope}

\begin{scope}[shift={(-4.7,-0.45)}]
  \node[blockv] (Ea) at (90:1.2) {$A$};
  \node[blockv] (Eb) at (18:1.2) {$B$};
  \node[blockv] (Ec) at (-54:1.2) {$C$};
  \node[blockv] (Ed) at (-126:1.2) {$D$};
  \node[blockv] (Ee) at (162:1.2) {$E$};
  \draw[line width=1pt,fill=orange!12] (Ea.center)--(Eb.center)--(Ec.center)--(Ed.center)--(Ee.center)--cycle;
  \foreach \n/\lab in {Ea/A,Eb/B,Ec/C,Ed/D,Ee/E} \node[blockv] at (\n) {$\lab$};
  \node[font=\small,anchor=east] at (-1.75,-0.1) {$L_{i,e}$};
\end{scope}

\begin{scope}[shift={(0,-1.1)}]
  \node[blockv] (Da) at (90:1.2) {$A$};
  \node[blockv] (Db) at (18:1.2) {$B$};
  \node[blockv] (Dc) at (-54:1.2) {$C$};
  \node[blockv] (Dd) at (-126:1.2) {$D$};
  \node[blockv] (De) at (162:1.2) {$E$};
  \draw[line width=1pt,fill=orange!12] (Da.center)--(Db.center)--(Dc.center)--(Dd.center)--(De.center)--cycle;
  \foreach \n/\lab in {Da/A,Db/B,Dc/C,Dd/D,De/E} \node[blockv] at (\n) {$\lab$};
  \node[font=\small] at (0,-1.85) {$L_{i,d}$};
\end{scope}

\begin{scope}[shift={(4.7,-0.45)}]
  \node[blockv] (Ba) at (90:1.2) {$A$};
  \node[blockv] (Bb) at (18:1.2) {$B$};
  \node[blockv] (Bc) at (-54:1.2) {$C$};
  \node[blockv] (Bd) at (-126:1.2) {$D$};
  \node[blockv] (Be) at (162:1.2) {$E$};
  \draw[line width=1pt,fill=orange!12] (Ba.center)--(Bb.center)--(Bc.center)--(Bd.center)--(Be.center)--cycle;
  \foreach \n/\lab in {Ba/A,Bb/B,Bc/C,Bd/D,Be/E} \node[blockv] at (\n) {$\lab$};
  \node[font=\small,anchor=west] at (1.75,-0.1) {$L_{i,b}$};
\end{scope}

\draw[spineedge] (Pc)
  .. controls ($(Pc)+(2.0,0.5)$) and ($(Sa)+(-0.4,1.8)$) ..
  node[callout,above,pos=.48,yshift=2pt] {$S_{i-1}[c]S_i[a]$} (Sa);
\draw[spineedge] (Sc)
  .. controls ($(Sc)+(1.5,0.45)$) and ($(Na)+(-2.0,0.5)$) ..
  node[callout,above,pos=.52,yshift=2pt] {$S_i[c]S_{i+1}[a]$} (Na);
\draw[spineedge] (Se) to[out=220,in=15]
  node[callout,above,pos=.22,xshift=-4pt,yshift=3pt] {$S_i[e]L_{i,e}[E]$} (Ee);
\draw[spineedge] (Sd) -- node[callout,left,pos=.42,xshift=-2pt] {$S_i[d]L_{i,d}[D]$} (Dd);
\draw[spineedge] (Sb) to[out=-40,in=165]
  node[callout,above,pos=.22,xshift=4pt,yshift=3pt] {$S_i[b]L_{i,b}[B]$} (Bb);

\node[vtx] (v) at (0,-4.25) {$v$};
\coordinate (vLone) at ($(v.north west)!0.08!(v.north east)$);
\coordinate (vLtwo) at ($(v.north west)!0.18!(v.north east)$);
\coordinate (vLthree) at ($(v.north west)!0.28!(v.north east)$);
\coordinate (vLfour) at ($(v.north west)!0.38!(v.north east)$);
\coordinate (vCone) at ($(v.north west)!0.45!(v.north east)$);
\coordinate (vCtwo) at ($(v.north west)!0.50!(v.north east)$);
\coordinate (vCthree) at ($(v.north west)!0.55!(v.north east)$);
\coordinate (vCfour) at ($(v.north west)!0.60!(v.north east)$);
\coordinate (vRone) at ($(v.north west)!0.67!(v.north east)$);
\coordinate (vRtwo) at ($(v.north west)!0.77!(v.north east)$);
\coordinate (vRthree) at ($(v.north west)!0.87!(v.north east)$);
\coordinate (vRfour) at ($(v.north west)!0.97!(v.north east)$);
\draw[vedge] (vLone) -- (Ed.south);
\draw[vedge] (vLtwo) -- (Ea.south);
\draw[vedge] (vLthree) -- (Ec.south);
\draw[vedge] (vLfour) -- (Eb.south);
\draw[vedge] (vCone) to[out=108,in=-120] (De.south);
\draw[vedge] (vCtwo) -- (Da.south);
\draw[vedge] (vCthree) -- (Dc.south);
\draw[vedge] (vCfour) to[out=72,in=-60] (Db.south);
\draw[vedge] (vRone) -- (Be.south);
\draw[vedge] (vRtwo) -- (Bd.south);
\draw[vedge] (vRthree) -- (Ba.south);
\draw[vedge] (vRfour) -- (Bc.south);
\end{tikzpicture}
\caption{
  The local attachment rules from Section~\ref{sec:1091}, shown around one internal spine block $S_i$.
The two adjacent spine blocks $S_{i-1}$ and $S_{i+1}$ and all three leaf blocks $L_{i,e},L_{i,d},L_{i,b}$ are included.
The spine pentagon uses local labels $a,b,c,d,e$, while each leaf pentagon uses local labels $A,B,C,D,E$.
Blue edges are inter-block edges; red edges connect the single special vertex $v$ to every leaf-block vertex except for the spine-adjacent ``attachment'' vertices, e.g. $L_{i,e}[E]$, $L_{i,d}[D]$, $L_{i,b}[B]$.
Thus every vertex of $G_m$ except $v$ has degree exactly $3$.
In Lemma~\ref{p1091:lem:leaf-forcing} we observe that in any putative $3$-coloring of $G_m$, the red edges force each spine-adjacent leaf vertex to have the same color as $v$.
Lemma~\ref{p1091:lem:propagation} then propagates vertices matching the color of $v$ down the $a,c$ spine, contradicting $3$-colorability in Proposition~\ref{p1091:prop:not-3-colorable}.
}
\label{p1091:fig:local-gadget}
\end{figure}
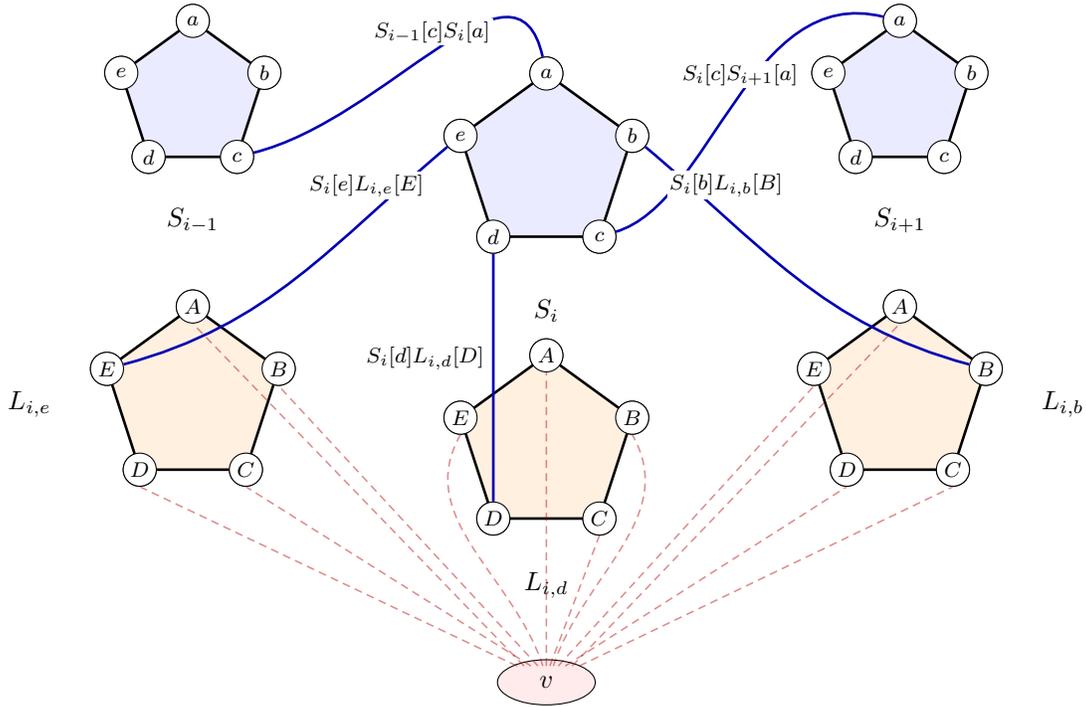

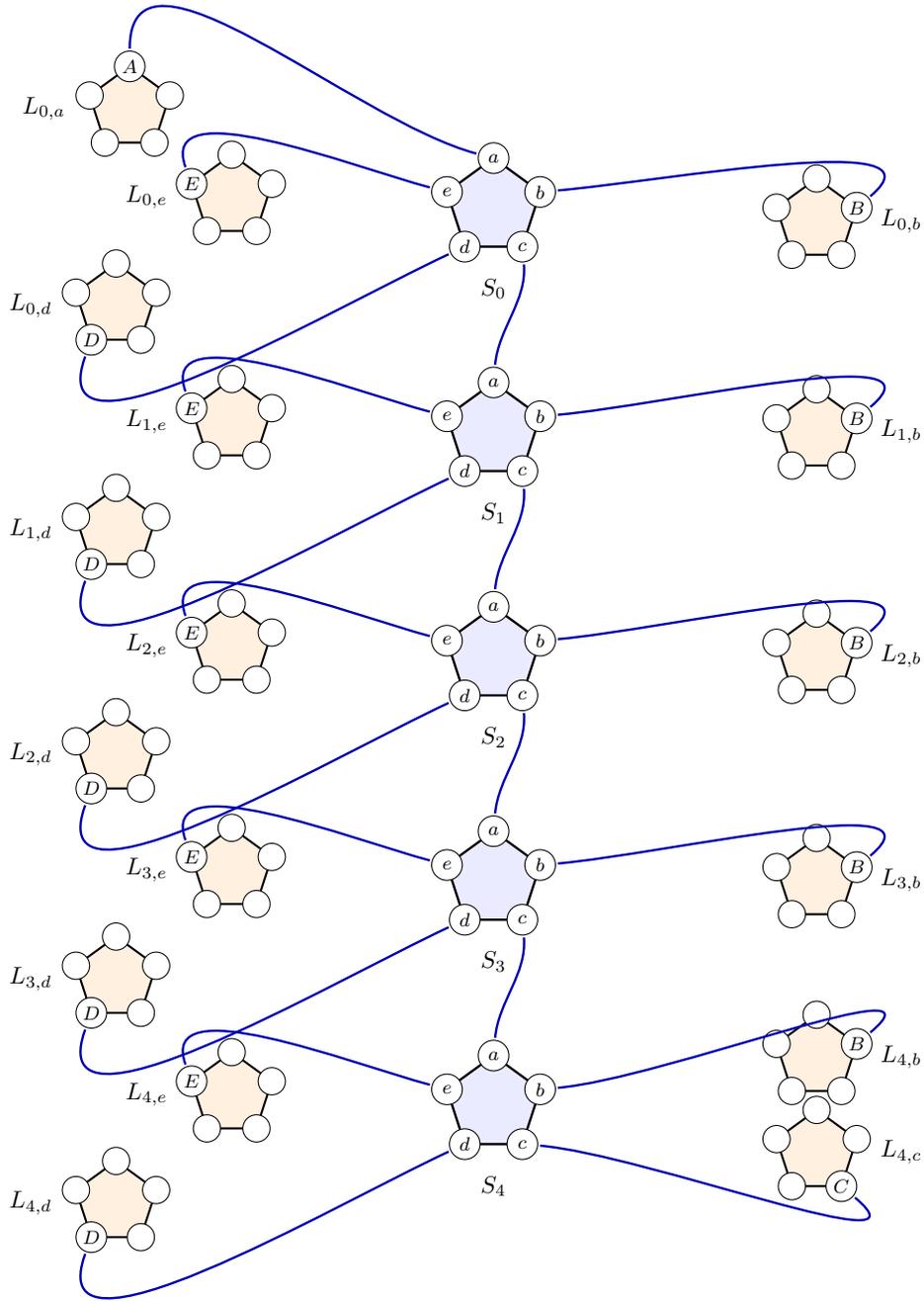
\begin{figure}[t]
\centering
\begin{tikzpicture}[scale=0.915,transform shape,
    spinev/.style={circle,draw,fill=white,minimum size=4.5mm,inner sep=0pt,font=\scriptsize},
    leafv/.style={circle,draw,fill=white,minimum size=4.0mm,inner sep=0pt},
    leafmark/.style={circle,draw,fill=white,minimum size=4.5mm,inner sep=0pt,font=\scriptsize},
    edge/.style={line width=0.9pt,blue!70!black,shorten <=6.8pt,shorten >=6.8pt},
    blockname/.style={font=\small,fill=white,inner sep=1pt},
    spinepent/.pic={
      \coordinate (-a) at (90:0.72);
      \coordinate (-b) at (18:0.72);
      \coordinate (-c) at (-54:0.72);
      \coordinate (-d) at (-126:0.72);
      \coordinate (-e) at (162:0.72);
      \draw[line width=0.9pt,fill=blue!8]
        (-a)--(-b)--(-c)--(-d)--(-e)--cycle;
      \node[spinev] at (-a) {$a$};
      \node[spinev] at (-b) {$b$};
      \node[spinev] at (-c) {$c$};
      \node[spinev] at (-d) {$d$};
      \node[spinev] at (-e) {$e$};
    },
    leafpent/.pic={
      \coordinate (-A) at (90:0.62);
      \coordinate (-B) at (18:0.62);
      \coordinate (-C) at (-54:0.62);
      \coordinate (-D) at (-126:0.62);
      \coordinate (-E) at (162:0.62);
      \draw[line width=0.8pt,fill=orange!12]
        (-A)--(-B)--(-C)--(-D)--(-E)--cycle;
      \node[leafv] at (-A) {};
      \node[leafv] at (-B) {};
      \node[leafv] at (-C) {};
      \node[leafv] at (-D) {};
      \node[leafv] at (-E) {};
    }
]
\pic (Szero) at (0,13.2) {spinepent};
\pic (Sone) at (0,9.9) {spinepent};
\pic (Stwo) at (0,6.6) {spinepent};
\pic (Sthree) at (0,3.3) {spinepent};
\pic (Sfour) at (0,0) {spinepent};

\node[blockname] at (0,12.0) {$S_0$};
\node[blockname] at (0,8.7) {$S_1$};
\node[blockname] at (0,5.4) {$S_2$};
\node[blockname] at (0,2.1) {$S_3$};
\node[blockname] at (0,-1.2) {$S_4$};

\pic (Lza) at (-5.35,14.65) {leafpent};
\pic (Lze) at (-3.85,13.35) {leafpent};
\pic (Lzd) at (-5.55,11.75) {leafpent};
\pic (Lzb) at (4.75,13.0) {leafpent};
\node[blockname,anchor=east] at (-6.25,14.65) {$L_{0,a}$};
\node[blockname,anchor=east] at (-4.75,13.35) {$L_{0,e}$};
\node[blockname,anchor=east] at (-6.45,11.75) {$L_{0,d}$};
\node[blockname,anchor=west] at (5.65,13.0) {$L_{0,b}$};
\node[leafmark] at (Lza-A) {$A$};
\node[leafmark] at (Lze-E) {$E$};
\node[leafmark] at (Lzd-D) {$D$};
\node[leafmark] at (Lzb-B) {$B$};
\draw[edge] (Lza-A) to[out=90,in=165] (Szero-a);
\draw[edge] (Lze-E) to[out=110,in=165] (Szero-e);
\draw[edge] (Lzd-D) to[out=250,in=205] (Szero-d);
\draw[edge] (Lzb-B) to[out=40,in=5] (Szero-b);

\pic (Loe) at (-3.85,10.05) {leafpent};
\pic (Lod) at (-5.55,8.45) {leafpent};
\pic (Lob) at (4.75,9.9) {leafpent};
\node[blockname,anchor=east] at (-4.75,10.05) {$L_{1,e}$};
\node[blockname,anchor=east] at (-6.45,8.45) {$L_{1,d}$};
\node[blockname,anchor=west] at (5.65,9.9) {$L_{1,b}$};
\node[leafmark] at (Loe-E) {$E$};
\node[leafmark] at (Lod-D) {$D$};
\node[leafmark] at (Lob-B) {$B$};
\draw[edge] (Loe-E) to[out=110,in=165] (Sone-e);
\draw[edge] (Lod-D) to[out=250,in=205] (Sone-d);
\draw[edge] (Lob-B) to[out=40,in=5] (Sone-b);

\pic (Lte) at (-3.85,6.75) {leafpent};
\pic (Ltd) at (-5.55,5.15) {leafpent};
\pic (Ltb) at (4.75,6.6) {leafpent};
\node[blockname,anchor=east] at (-4.75,6.75) {$L_{2,e}$};
\node[blockname,anchor=east] at (-6.45,5.15) {$L_{2,d}$};
\node[blockname,anchor=west] at (5.65,6.6) {$L_{2,b}$};
\node[leafmark] at (Lte-E) {$E$};
\node[leafmark] at (Ltd-D) {$D$};
\node[leafmark] at (Ltb-B) {$B$};
\draw[edge] (Lte-E) to[out=110,in=165] (Stwo-e);
\draw[edge] (Ltd-D) to[out=250,in=205] (Stwo-d);
\draw[edge] (Ltb-B) to[out=40,in=5] (Stwo-b);

\pic (Lhe) at (-3.85,3.45) {leafpent};
\pic (Lhd) at (-5.55,1.85) {leafpent};
\pic (Lhb) at (4.75,3.3) {leafpent};
\node[blockname,anchor=east] at (-4.75,3.45) {$L_{3,e}$};
\node[blockname,anchor=east] at (-6.45,1.85) {$L_{3,d}$};
\node[blockname,anchor=west] at (5.65,3.3) {$L_{3,b}$};
\node[leafmark] at (Lhe-E) {$E$};
\node[leafmark] at (Lhd-D) {$D$};
\node[leafmark] at (Lhb-B) {$B$};
\draw[edge] (Lhe-E) to[out=110,in=165] (Sthree-e);
\draw[edge] (Lhd-D) to[out=250,in=205] (Sthree-d);
\draw[edge] (Lhb-B) to[out=40,in=5] (Sthree-b);

\pic (Lfe) at (-3.85,0.15) {leafpent};
\pic (Lfd) at (-5.55,-1.45) {leafpent};
\pic (Lfb) at (4.75,0.7) {leafpent};
\pic (Lfc) at (4.75,-0.7) {leafpent};
\node[blockname,anchor=east] at (-4.75,0.15) {$L_{4,e}$};
\node[blockname,anchor=east] at (-6.45,-1.45) {$L_{4,d}$};
\node[blockname,anchor=west] at (5.65,0.7) {$L_{4,b}$};
\node[blockname,anchor=west] at (5.65,-0.7) {$L_{4,c}$};
\node[leafmark] at (Lfe-E) {$E$};
\node[leafmark] at (Lfd-D) {$D$};
\node[leafmark] at (Lfb-B) {$B$};
\node[leafmark] at (Lfc-C) {$C$};
\draw[edge] (Lfe-E) to[out=110,in=165] (Sfour-e);
\draw[edge] (Lfd-D) to[out=250,in=205] (Sfour-d);
\draw[edge] (Lfb-B) to[out=40,in=5] (Sfour-b);
\draw[edge] (Lfc-C) to[out=-40,in=-5] (Sfour-c);

\draw[edge] (Szero-c) to[out=-85,in=85] (Sone-a);
\draw[edge] (Sone-c) to[out=-85,in=85] (Stwo-a);
\draw[edge] (Stwo-c) to[out=-85,in=85] (Sthree-a);
\draw[edge] (Sthree-c) to[out=-85,in=85] (Sfour-a);
\end{tikzpicture}
\caption{The graph $G_m^0=G_m\backslash \{v\}$ is shown with five spine blocks, i.e. $m=4$.
Each spine pentagon has vertices labelled $a,b,c,d,e$.
For each leaf pentagon, the only label displayed is for the attachment vertex $X\in\{A,B,C,D,E\}$ used by its inter-block edge $S_i[x]L_{i,x}[X]$ connecting it to the spine.
The only endpoint asymmetry is that $S_0$ has no $c$-leaf, while $S_m$ has no $a$-leaf; this is key in Lemma~\ref{p1091:lem:propagation} and Proposition~\ref{p1091:prop:not-3-colorable}.
In the graph $G_m$, the additional special vertex $v$ is connected to all \emph{unlabelled} leaf vertices (i.e. those not connected to a spine block).
Thus all vertices shown above have degree exactly $3$ in $G_m$.
}
\label{p1091:fig:block-tree}
\end{figure}

\clearpage


\begin{lemma}\label{p1091:lem:k4free}
The graph $G_m$ is $K_4$-free.
\end{lemma}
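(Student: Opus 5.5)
A copy of $K_4$ would need four vertices, each of degree at least $3$ and pairwise adjacent. We split according to whether the copy contains the special vertex $v$, and rule out each case using the triangle structure of $G_m$.

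\emph{Triangles in $G_m^0$.} We first show that the only cycles of $G_m^0$ are the pentagons. Contracting each pentagon block to a point turns $G_m^0$ into the block tree, a path of spine blocks with pendant leaf blocks. Each pair of adjacent blocks is joined by exactly one inter-block edge. Hence any cycle of $G_m^0$ that uses an inter-block edge would project to a closed walk in a tree that crosses some tree edge exactly once, which is impossible. So every cycle of $G_m^0$ lies inside one pentagon and has length $5$, and therefore $G_m^0$ is triangle-free.

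\emph{Case 1: the $K_4$ avoids $v$.} Then it lies in $G_m^0$. Since $K_4$ contains a triangle, this contradicts the triangle-freeness just shown.

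\emph{Case 2: the $K_4$ contains $v$.} The other three vertices are neighbors of $v$, so they lie in $N(v)$, the set of non-attachment leaf vertices $L_{i,x}[Y]$ with $Y\neq X$. These three vertices would be pairwise adjacent in $G_m^0$ and so form a triangle there, again a contradiction. In fact $N(v)$ induces a disjoint union of paths: within each leaf pentagon the four non-attachment vertices form a path $P_4$, and there are no edges between different leaf blocks other than those through the spine. Thus $N(v)$ is triangle-free, and $v$ lies in no $K_4$.

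The only step needing real care is the claim that $G_m^0$ has no cycles beyond the pentagons. It follows from the block tree being a tree with single inter-block edges, so I expect no genuine obstacle.
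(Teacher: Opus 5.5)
Your proof is correct and follows essentially the same route as the paper: show $G_m^0$ is triangle-free (your block-tree/cut-edge justification is the argument the paper uses later in Lemma~\ref{p1091:lem:cycle-in-base}), so any $K_4$ must contain $v$, and then observe that $N(v)$ induces a disjoint union of paths. Your Case~2 already closes using only the triangle-freeness of $G_m^0$, so the path description of $N(v)$ is a redundant (though harmless) confirmation.
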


\begin{proof}
The graph $G_m^0=G_m\backslash\{v\}$ is triangle-free: each block is a $5$-cycle, and different blocks are connected only by inter-block edges.
So any $K_4$ in $G_m$ would have to contain $v$.
In a leaf block $L_{i,x}$, if $X$ is the uppercase version of $x$, then the neighbors of $v$ are the four vertices $L_{i,x}[Y]$ with $Y\neq X$, and they induce the $4$-vertex path obtained from that pentagon by deleting $L_{i,x}[X]$.
Hence $N(v)$ is a disjoint union of paths, so in particular it is triangle-free.
Thus $v$ cannot lie in a $K_4$ either.
\end{proof}

\subsection{\texorpdfstring{$G_m$}{Gm} is not \texorpdfstring{$3$-colorable}{3-colorable}}

Assume for contradiction that $G_m$ has a proper $3$-coloring.
Let $\alpha$ be the color of $v$, and let $\beta,\gamma$ be the other two colors.

\begin{lemma}[Leaf forcing]\label{p1091:lem:leaf-forcing}
In every leaf block $L_{i,x}$, if $X$ is the uppercase version of $x$, then the attachment vertex $L_{i,x}[X]$ has the same color as $v$.
\end{lemma}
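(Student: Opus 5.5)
Fix a leaf block $L_{i,x}$ with attachment vertex $L_{i,x}[X]$, and write the pentagon cyclically starting from the attachment vertex as $X=u_0,u_1,u_2,u_3,u_4$, so that $u_1,u_2,u_3,u_4$ are consecutive on the pentagon and form the induced path $u_1u_2u_3u_4$ described in the proof of Lemma~\ref{p1091:lem:k4free}. The plan is to use only the edges inside this pentagon together with the four edges from $v$; the inter-block edge at $u_0$ plays no role.

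First, since $v$ is adjacent to each of $u_1,u_2,u_3,u_4$ and has color $\alpha$, none of these four vertices can receive $\alpha$. Hence the path $u_1u_2u_3u_4$ is properly colored using only the two colors $\beta,\gamma$. A properly $2$-colored path must alternate, so $u_1$ and $u_4$ (at distance $3$, an odd distance along the path) receive different colors; one of them is $\beta$ and the other is $\gamma$.

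Finally, $u_0$ is adjacent in the pentagon to both $u_1$ and $u_4$, so its color differs from both $\beta$ and $\gamma$. In a $3$-coloring the only remaining option is $\alpha$, the color of $v$, which is the claim. There is no real obstacle here; the only point needing care is checking from the construction that the four $v$-neighbors are exactly the non-attachment vertices and that they are consecutive on the pentagon (so the two neighbors of $u_0$ are the endpoints of the induced path), which holds since the pentagon minus one vertex is a path whose endpoints are precisely the two neighbors of the deleted vertex.
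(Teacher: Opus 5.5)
Your proof is correct and follows essentially the same argument as the paper. In both, the four non-attachment vertices form a path adjacent to $v$, so they alternate between $\beta$ and $\gamma$; this forces the two pentagon-neighbors of $L_{i,x}[X]$ to receive distinct non-$\alpha$ colors, leaving $\alpha$ as the only option for $L_{i,x}[X]$.
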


\begin{proof}
Remove the attachment vertex $L_{i,x}[X]$ from the leaf pentagon.
The other four leaf vertices form a path, and all four are adjacent to $v$, so they can use only the colors $\beta$ and $\gamma$.
Hence they must alternate along that path.
In particular, the two endpoints of the path, which are exactly the two neighbors of $L_{i,x}[X]$ inside the pentagon, receive different colors.
Therefore $L_{i,x}[X]$ cannot use $\beta$ or $\gamma$, so it must use $\alpha$.
\end{proof}

\begin{lemma}
\label{p1091:lem:propagation}
Let $Q$ be a spine pentagon.
\begin{enumerate}
    \item If $Q=S_0$ and $Q[a],Q[b],Q[d],Q[e]$ all avoid the color $\alpha$, then $Q[c]$ has color $\alpha$.
    \item If $Q=S_i$ with $1\le i\le m-1$ and $Q[a],Q[b],Q[d],Q[e]$ all avoid the color $\alpha$, then $Q[c]$ has color $\alpha$.
    \item If $Q=S_m$ and $Q[b],Q[c],Q[d],Q[e]$ all avoid the color $\alpha$, then $Q[a]$ has color $\alpha$.
\end{enumerate}
\end{lemma}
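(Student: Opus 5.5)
The plan is to observe that all three parts are the same statement about a single $5$-cycle under a proper $3$-coloring, and to prove that once. The lemma's hypotheses do not mention the neighbors of $Q$ outside the pentagon, so we only use the pentagon edges of $Q$. The cyclic order is $a,b,c,d,e$.

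\textbf{Core claim.} Suppose a $5$-cycle $x_0x_1x_2x_3x_4$ is properly $3$-colored and four of its vertices avoid $\alpha$. Then the fifth vertex has color $\alpha$.

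To prove it, delete the fifth vertex, say $x_0$. The remaining vertices $x_1,x_2,x_3,x_4$ form a path of four vertices that uses only the colors $\beta,\gamma$. A properly $2$-colored path must alternate, so its two endpoints $x_1$ and $x_4$ receive different colors: one gets $\beta$ and the other gets $\gamma$. Since $x_0$ is adjacent to both $x_1$ and $x_4$, it can use neither $\beta$ nor $\gamma$, and so it has color $\alpha$. Alternatively, note that an odd cycle is not $2$-colorable, so the fifth vertex cannot avoid $\alpha$ as well. This is the same argument as Lemma~\ref{p1091:lem:leaf-forcing}.

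\textbf{Applying the claim.} For parts (1) and (2), the four vertices avoiding $\alpha$ are $a,b,d,e$, and the fifth vertex is $c$. The two pentagon neighbors of $c$ are $b$ and $d$, which are the endpoints of the path $d\,e\,a\,b$. For part (3), the four vertices avoiding $\alpha$ are $b,c,d,e$, and the fifth vertex is $a$. Its pentagon neighbors are $b$ and $e$, which are the endpoints of the path $b\,c\,d\,e$.

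There is no real obstacle in this step. The only care needed is in checking that the removed vertex's two cycle-neighbors are exactly the endpoints of the remaining path. The split into three parts only matters for the later propagation step, which will depend on which vertices are known to avoid $\alpha$ via Lemma~\ref{p1091:lem:leaf-forcing} and the spine edges.
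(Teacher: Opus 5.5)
Your proof is correct and takes essentially the same approach as the paper: a path of four vertices colored only with $\beta,\gamma$ must alternate, so its endpoints (the two pentagon neighbors of the fifth vertex) get different colors, which forces the fifth vertex to be $\alpha$. The paper runs this step by step along $d\,e\,a\,b$ for parts (1) and (2) and along $b\,c\,d\,e$ for part (3); your single "core claim" covering all three parts is just a cleaner packaging of the same argument.
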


\begin{proof}
For (1) and (2), the same argument applies.
Since $Q[a]$ avoids $\alpha$, it has color $\beta$ or $\gamma$.
Its two neighbors $Q[b]$ and $Q[e]$ therefore both have the other non-$\alpha$ color.
Then $Q[d]$, being adjacent to $Q[e]$ and also avoiding $\alpha$, has the same color as $Q[a]$.
So $Q[b]$ and $Q[d]$ have different colors, and their common neighbor $Q[c]$ must use the third color $\alpha$.

For (3), the path $Q[b]-Q[c]-Q[d]-Q[e]$ uses only the colors $\beta$ and $\gamma$, so it alternates.
Hence $Q[b]$ and $Q[e]$ have different colors.
Since $Q[a]$ is adjacent to both $Q[b]$ and $Q[e]$, the only available color for $Q[a]$ is $\alpha$.
\end{proof}

\begin{proposition}\label{p1091:prop:not-3-colorable}
The graph $G_m$ is not $3$-colorable.
\end{proposition}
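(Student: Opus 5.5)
We argue by contradiction, continuing with the putative proper $3$-coloring fixed above, in which $v$ has color $\alpha$. The plan is to show by induction along the spine that every connecting vertex $S_i[c]$ is forced to color $\alpha$. We then derive a clash at the final block $S_m$.

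\emph{Base step.} By Lemma~\ref{p1091:lem:leaf-forcing}, the attachment vertices $L_{0,a}[A]$, $L_{0,b}[B]$, $L_{0,d}[D]$, $L_{0,e}[E]$ all receive color $\alpha$. Their spine neighbours $S_0[a],S_0[b],S_0[d],S_0[e]$ are therefore all adjacent to an $\alpha$-colored vertex, so they avoid $\alpha$. Lemma~\ref{p1091:lem:propagation}(1) then gives that $S_0[c]$ has color $\alpha$.

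\emph{Inductive step.} Suppose $S_{i-1}[c]$ has color $\alpha$ for some $1\le i\le m-1$. Then $S_i[a]$, being adjacent to $S_{i-1}[c]$ via the spine edge, avoids $\alpha$. The vertices $S_i[b],S_i[d],S_i[e]$ are adjacent to the attachment vertices of $L_{i,b},L_{i,d},L_{i,e}$, which have color $\alpha$ by Lemma~\ref{p1091:lem:leaf-forcing}, so they avoid $\alpha$ as well. Lemma~\ref{p1091:lem:propagation}(2) then shows that $S_i[c]$ has color $\alpha$. By induction, $S_{m-1}[c]$ has color $\alpha$. When $m=1$ this is just the base step.

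\emph{Final block.} In $S_m$, the vertices $S_m[b],S_m[c],S_m[d],S_m[e]$ are adjacent to the attachment vertices of $L_{m,b},L_{m,c},L_{m,d},L_{m,e}$, all colored $\alpha$, so they avoid $\alpha$. Lemma~\ref{p1091:lem:propagation}(3) then forces $S_m[a]$ to have color $\alpha$. However, $S_m[a]$ is adjacent to $S_{m-1}[c]$, which also has color $\alpha$. This contradicts properness of the coloring, so $G_m$ is not $3$-colorable.

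There is no real obstacle here. The only point needing care is the bookkeeping at the endpoints: $S_0$ carries an $a$-leaf but no $c$-leaf, while $S_m$ carries a $c$-leaf but no $a$-leaf. We must check that these are exactly the hypotheses required by parts (1) and (3) of Lemma~\ref{p1091:lem:propagation}.
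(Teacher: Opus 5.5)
Your proof is correct and follows the paper's argument essentially verbatim. Leaf forcing makes every leaf-adjacent spine vertex avoid $\alpha$, Lemma~\ref{p1091:lem:propagation}(1)--(2) propagate $S_i[c]=\alpha$ along the spine up to $S_{m-1}$, and part (3) forces $S_m[a]=\alpha$, which clashes with its neighbour $S_{m-1}[c]$; your explicit remark on the case $m=1$ is a small point the paper leaves implicit.
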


\begin{proof}
Assume that a proper $3$-coloring exists.
By Lemma~\ref{p1091:lem:leaf-forcing}, every leaf attachment vertex has color $\alpha$.
Consequently, every spine vertex incident to a leaf inter-block edge avoids the color $\alpha$.

Apply Lemma~\ref{p1091:lem:propagation}(1) to $S_0$.
The vertices $S_0[a],S_0[b],S_0[d],S_0[e]$ all avoid $\alpha$, so $S_0[c]$ has color $\alpha$.
Since $S_0[c]$ is adjacent to $S_1[a]$, the latter avoids $\alpha$.

Now suppose $1\le i\le m-1$ and $S_i[a]$ avoids $\alpha$.
The vertices $S_i[b],S_i[d],S_i[e]$ also avoid $\alpha$, because they are incident to leaf inter-block edges.
So Lemma~\ref{p1091:lem:propagation}(2) gives $S_i[c]=\alpha$.
Hence the inter-block edge to the next spine block forces $S_{i+1}[a]\neq\alpha$.
By induction,
\[
S_i[c]=\alpha \qquad (0\le i\le m-1),
\]
and therefore $S_m[a]\neq\alpha$.

In the terminal block $S_m$, all four slots
$b,c,d,e$ avoid $\alpha$.
Lemma~\ref{p1091:lem:propagation}(3) therefore implies
$S_m[a]=\alpha$.
This contradiction shows that no proper $3$-coloring exists.
\end{proof}

\subsection{All Proper Subgraphs of $G_m$ are \texorpdfstring{$2$-degenerate}{2-degenerate}}

Recall that a graph is \emph{$2$-degenerate} if every nonempty subgraph has a vertex of degree at most $2$.
Equivalently, this means its vertices can be removed one by one (``peeled'') so that each removed vertex has degree at most $2$ in the current graph at the time of its removal.
Note that any $2$-degenerate graph is $3$-colorable by induction, by assigning colors greedily in the opposite order of peeling.

\begin{proposition}\label{p1091:prop:degenerate}
For every edge $e\in E(G_m)$, the graph $G_m\backslash\{e\}$ is $2$-degenerate, hence $3$-colorable.
\end{proposition}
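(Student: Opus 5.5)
The plan is to exhibit, for each edge $e=xy$, an explicit peeling order for $G_m\backslash\{e\}$. Every vertex other than $v$ has degree exactly $3$ in $G_m$, so after deleting $e$ both endpoints (or the single non-$v$ endpoint, if $e$ is incident to $v$) have degree at most $2$. Once any vertex is removed, its neighbours drop to degree at most $2$. We therefore grow the deleted set outward like a wave: at each step we remove a degree-$3$ vertex (in $G_m$) that has already lost a neighbour. The vertex $v$ has large degree, so it must be handled separately. The cleanest route is to show that every vertex of $G_m^0$ can be peeled before $v$, which leaves $v$ isolated at the end.

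The first step is a \emph{connectivity-of-peeling} lemma inside $G_m^0$. Call a vertex \emph{activated} once one of its neighbours in $G_m$ has been removed; an activated vertex $u\neq v$ has current degree at most $2$ and may be removed. Starting from the endpoint of $e$ and repeatedly removing activated non-$v$ vertices, we remove every vertex of $G_m^0$ that is reachable from the start in $G_m^0$, because $G_m^0$ is connected (a tree of pentagons joined by inter-block edges). Connectivity of $G_m^0$ follows directly from the construction: the spine path $S_0,\dots,S_m$ is linked by the edges $S_i[c]S_{i+1}[a]$, and each leaf is attached by its edge $S_i[x]L_{i,x}[X]$. Hence, as long as at least one vertex of $G_m^0$ can be removed first, all of $G_m^0$ gets peeled, and $v$ is then isolated and is peeled last.

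The second step handles the start according to the type of $e$. If $e$ lies inside $G_m^0$, that is, in a pentagon or on an inter-block edge, both endpoints have degree $2$ in $G_m\backslash\{e\}$; remove one and propagate. If $e=vu$ with $u$ a leaf vertex, then $u$ has degree $2$; remove it and propagate. In every case the propagation never needs to remove $v$ early, since the removal rule only needs some non-$v$ activated vertex to exist until $G_m^0$ is exhausted, and the neighbours of removed vertices inside the connected graph $G_m^0$ supply one.

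The only point needing care, which I expect to be the main (mild) obstacle, is checking that the degree bookkeeping is right throughout. When a vertex $w\neq v$ is removed, each remaining vertex has lost at least one neighbour only if it is adjacent to an already-removed vertex. So we must make sure that the next vertex chosen really is adjacent to a removed vertex, or to the deleted edge, and not merely reachable. A BFS order on $G_m^0$ rooted at the starting endpoint guarantees this, since every vertex after the root has its BFS parent removed before it. Since the paper defines $2$-degenerate via peeling, this order suffices. Finally, $3$-colorability follows by greedy coloring in reverse order, as noted before the proposition.
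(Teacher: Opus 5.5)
Your proof is correct. It rests on the same two facts as the paper's proof: $G_m^0$ is connected, and every vertex other than $v$ has degree exactly $3$. The difference is which side of the equivalence between the two definitions of $2$-degeneracy you prove.

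You build an explicit peeling order: start at a non-$v$ endpoint of $e$, follow a BFS of $G_m^0$, and remove $v$ last once it is isolated. Defining ``activated'' via neighbours in $G_m$ also covers the one delicate case, where a vertex's BFS parent is joined to it by the deleted edge $e$ itself (e.g.\ when $e$ is an inter-block cut edge). That vertex has degree $2$ in $G_m\setminus\{e\}$ to begin with, so it can still be removed.

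The paper instead checks directly that every nonempty proper subgraph $H\subsetneq G_m$ has a vertex of degree at most $2$. It splits into two cases:
\begin{itemize}
\item if $V(H)=V(G_m)$, a non-$v$ endpoint of a missing edge works;
\item otherwise, connectivity of $G_m^0$ gives a non-$v$ vertex of $H$ with a $G_m$-neighbour outside $V(H)$.
\end{itemize}

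The paper's version is shorter, needs no ordering bookkeeping, and treats vertex-deleted subgraphs at the same time. Yours is constructive and makes the greedy $3$-coloring explicit. It loses no generality, since $2$-degeneracy is hereditary and every proper subgraph of $G_m$ lies inside some $G_m\setminus\{e\}$.
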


\begin{proof}
We show that any non-empty proper subgraph $H\subsetneq G_m$ has minimum degree at most $2$.
This follows from the fact that $G_m^0=G_m\backslash\{v\}$ is connected and all vertices except $v$ have degree $3$.
Indeed, first suppose that $V(H)=V(G_m)$; then consider $e\in E(H)\backslash E(G_m)$ and note that $e$ is incident to a vertex different from $v$; said vertex has $H$-degree at most $2$.
Next suppose $V(H)\subsetneq V(G_m)$.
If $V(H)\subseteq \{v\}$ the conclusion is trivial.
If not, since $G_m^0$ is connected, there is $u\in V(H)\backslash \{v\}$ with at least one $G_m$-neighbor not in $V(H)$.
Then $u$ has $H$-degree at most $2$.
This completes the proof.
\end{proof}

\subsection{Bounding the number of chords}

It remains to show that every cycle in $G_m$ has a uniformly bounded number of chords.
First, only cycles including $v$ can have chords; then, given a cycle containing $v$, the chords are counted separately for spine blocks and leaf blocks (no chord can connect a leaf vertex to a spine vertex).

\begin{lemma}\label{p1091:lem:cycle-in-base}
Every cycle in $G_m^0$ is contained in a single pentagon block.
In particular, every cycle in $G_m^0$ has no chords.
\end{lemma}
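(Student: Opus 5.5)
The plan is to exploit the fact that $G_m^0$ is a ``tree of pentagons'': if we contract each pentagon block to a single node, the inter-block edges form a tree $\mathcal T$ (the block tree described above). So the first step is to record that the block tree is indeed a tree. It is connected, it has $4m+6$ nodes, and it has exactly $4m+5$ inter-block edges: $m$ spine-to-spine edges plus $4+3(m-1)+4$ leaf edges. Moreover, any two blocks are joined by at most one inter-block edge, because each block pair appearing in the construction is attached by a single edge $S_i[x]L_{i,x}[X]$ or $S_i[c]S_{i+1}[a]$.

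Next, take a cycle $C$ in $G_m^0$ and suppose for contradiction that it uses some inter-block edge $f=uw$, with $u$ in block $P$ and $w$ in block $P'$. Deleting the edge $PP'$ from $\mathcal T$ splits the blocks into two components $\mathcal T_1\ni P$ and $\mathcal T_2\ni P'$. Let $V_1$ and $V_2$ be the sets of vertices lying in blocks of $\mathcal T_1$ and $\mathcal T_2$ respectively. Since each block lies entirely on one side, every edge of $G_m^0$ between $V_1$ and $V_2$ is an inter-block edge between a block of $\mathcal T_1$ and a block of $\mathcal T_2$. The only such edge is $f$. So $f$ is a bridge of $G_m^0$. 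A cycle crosses any edge cut an even number of times, so it cannot cross the one-edge cut $\{f\}$ exactly once. This contradiction shows that $C$ uses no inter-block edges, and hence all its edges lie inside pentagons. Because $C$ is connected and there are no edges between different blocks other than inter-block edges, $C$ lies inside a single pentagon.

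Finally, a cycle inside a $5$-cycle must be the whole pentagon, since a $C_5$ has no proper cycles. The only edges of $G_m^0$ among those five vertices are the five pentagon edges, because inter-block edges join different blocks. Hence $C$ has no chords.

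The only real obstacle is bookkeeping. We must check carefully that the block-level graph is a tree with no multi-edges, so that each inter-block edge really is a bridge. Everything else follows immediately from the cut-parity argument.
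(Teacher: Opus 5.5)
Your proof is correct and follows the same route as the paper: every inter-block edge is a cut edge of $G_m^0$, so no cycle can use one, which forces any cycle to be an entire pentagon and hence chordless. The only difference is that you write out the bookkeeping the paper leaves implicit, namely that the block tree has $4m+6$ nodes and $4m+5$ edges and is connected, hence is a tree, together with the cut-parity justification.
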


\begin{proof}
Every inter-block edge of $G_m^0$ is a cut edge, so no cycle can use one.
Hence any cycle of $G_m^0$ lies inside one block, and each block is a $5$-cycle.
\end{proof}

\begin{lemma}\label{p1091:lem:endpoint-leaf}
Let $C$ be a cycle containing the special vertex $v$, let $Q=L_{i,x}$ be a leaf block visited by $C$, and $X$ the uppercase version of $x$.
Count a chord of $C$ \emph{in $Q$} if either both endpoints lie in $V(Q)$, or one endpoint is $v$ and the other lies in $V(Q)$.
Then at most $4$ chords of $C$ are counted in $Q$.
\end{lemma}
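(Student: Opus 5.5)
The plan is to bound the number of chord candidates lying inside $Q$ by the number of non-$C$ edges among the edges of $G_m$ that lie within $V(Q)\cup\{v\}$. The edges counted ``in $Q$'' are all edges of the induced subgraph $G_m[V(Q)\cup\{v\}]$ with both endpoints on $C$ that are not edges of $C$. This induced subgraph has exactly $5+4=9$ edges: the five pentagon edges and the four edges from $v$ to $L_{i,x}[Y]$, $Y\neq X$. So it suffices to show that $C$ uses at least $5$ of these $9$ edges whenever $C$ visits $Q$. Note that edges of $C$ joining two vertices of $C$ are never chords, so chords in $Q$ number at most $9$ minus (the number of these edges used by $C$).

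First I describe how $C$ traverses $Q$. Every vertex of $Q$ has degree $3$: two pentagon edges plus one external edge. The external edge goes to $v$ for $Y\neq X$, and to $S_i[x]$ for $Y=X$. The edge $S_i[x]L_{i,x}[X]$ is a cut edge of $G_m$: removing it separates $Q\cup\{v\}\cup(\text{other leaves})$ from the spine side. The spine side, however, connects back to $v$ through the other leaves, so I need care here.

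Instead I argue directly by how $C$ enters and exits $Q$. The maximal subpaths of $C$ inside $Q$ are entered and exited through external edges. Each such subpath is a path in the pentagon whose endpoints are joined to $v$ or to $S_i[x]$. Since $v$ appears once on $C$, $C$ uses at most $2$ edges at $v$, so at most two of the subpath endpoints attach to $v$; the remaining endpoints must use $L_{i,x}[X]$'s external edge, which can be used at most once. Hence there are at most $3$ endpoint-slots. Since each subpath has two endpoint-slots (or one if it is a single vertex), $C$ has at most one subpath in $Q$ (two subpaths would need $4$ slots). Thus $C\cap Q$ is a single path $P$ with $k$ vertices ($1\le k\le 5$), using $k-1$ pentagon edges, entered and exited through two external edges.

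Now count. The vertices of $C$ in $V(Q)\cup\{v\}$ are the $k$ vertices of $P$ together with $v$. Candidate chords are the pentagon edges among $V(P)$ not in $P$ — at most one, namely the closing edge when $k=5$ — plus the edges from $v$ to vertices of $P$ other than $L_{i,x}[X]$, minus those used by $C$. There are at most $k$ of the former kind, and $C$ uses at least one $v$-edge into $Q$ unless both ends exit through $X$, which is impossible. Case by case, the total is at most $1+4-1=4$ when $k=5$, and at most $k\le4$ when $k\le4$. The main step to check carefully is the ``single subpath'' argument, specifically that both exits cannot use the same external edge; this holds because $C$ is a cycle.
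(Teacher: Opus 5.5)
Your proof is correct and follows essentially the paper's argument: $C\cap Q$ is a single rim path $P$, and the only chords counted in $Q$ are the closing rim edge (present only when $P$ covers all five vertices) together with the $v$-edges to vertices of $P$ other than $Q[X]$ that $C$ does not already use; since $C$ uses at least one $v$-edge into $P$, the total is at most $4$. You are slightly more careful than the paper, which asserts without justification that $P$ is a single path running from $Q[X]$ to a neighbour of $v$ (false when $V(C)\subseteq V(Q)\cup\{v\}$, though the bound still holds there), but you should fix two slips. First, a one-vertex run of $C$ in $Q$ does not use ``one slot''; it is impossible, because it would need two external edges at a vertex that has only one, and without this fact your slot count does not rule out a second run. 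Second, the target in your first paragraph (that $C$ uses at least $5$ of the $9$ edges) is false for short $P$, e.g.\ $|V(P)|=2$, and your final count does not rely on it, since it counts only edges between vertices of $C$.
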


\begin{proof}
The intersection $P:=C\cap Q$ is a rim path in the pentagon $Q$.
Its two endpoints are the attachment vertex $Q[X]$ and a vertex $Q[Y]$ with $Y\neq X$ that is joined to $v$ by an edge of $C$.
The only possible chords of $C$ counted in $Q$ are:
\begin{enumerate}
    \item edges from $v$ to internal vertices of $P$, of which there are at most $3$;
    \item possibly the pentagon edge $Q[X]Q[Y]$, which can only be a chord when $Q[X]$ and $Q[Y]$ are adjacent on the rim and $P$ is the longer of the two rim paths between them.
\end{enumerate}
So at most $4$ chords of $C$ are counted in $Q$.
\end{proof}

\begin{lemma}\label{p1091:lem:spine-contrib}
Let $C$ be a cycle containing the special vertex $v$.
For a spine block $Q$ visited by $C$, count a chord of $C$ \emph{in $Q$} if both endpoints lie in $V(Q)$.
Then:
\begin{enumerate}
    \item no chord of $C$ is counted in an internal spine block;
    \item $\leq 1$ chord of $C$ is counted in each of the two end spine blocks on the visited spine segment.
\end{enumerate}
\end{lemma}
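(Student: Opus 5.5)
The plan is to first pin down how a cycle $C$ through $v$ can meet the spine, and then count rim edges of a spine pentagon that are chords.

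\emph{Structure of $C$.} Delete $v$ from $C$. What remains is a path $C-v$ inside $G_m^0$, and both of its endpoints lie in leaf blocks, namely at leaf vertices adjacent to $v$. Since $G_m^0$ is a tree of pentagons glued by cut edges (Lemma~\ref{p1091:lem:cycle-in-base}), this path has a simple shape. It passes through a sequence of blocks. It enters each block through at most one inter-block edge and leaves through at most one inter-block edge. Inside each block it follows a rim path of that pentagon.

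Because leaves hang off single spine vertices, the spine blocks visited by $C$ form a contiguous segment $S_j,\dots,S_\ell$. For an internal block $S_i$ of this segment, with $j<i<\ell$, the path enters at $S_i[a]$ and exits at $S_i[c]$. Its restriction is one of the two rim paths from $a$ to $c$. For an end block the path instead runs between a leaf-attachment vertex and a spine connection vertex, or between two leaf-attachment vertices when $j=\ell$.

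\emph{Counting chords inside a spine block.} A chord counted in $Q$ is a rim edge of $Q$ whose two endpoints both lie on $C$ but which is not itself an edge of $C$.

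For an internal block, the rim path is either $a\,b\,c$ or $a\,e\,d\,c$.
\begin{itemize}
\item If $C$ uses $a\,b\,c$, the vertices $d$ and $e$ are not on $C$. Every rim edge other than $ab$ and $bc$ touches $d$ or $e$, so none of them is a chord.
\item If $C$ uses $a\,e\,d\,c$, then $b$ is not on $C$, and the only rim edges not used are $ab$ and $bc$, both of which touch $b$. So there are again no chords.
\end{itemize}
This proves (1). The underlying fact is that a rim path of a $5$-cycle uses every rim edge whose two endpoints it covers, except possibly the single edge joining its own two endpoints.

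For an end block, the rim path $P=C\cap Q$ is again a path on the pentagon, now with two arbitrary endpoints $x\neq y$. Let $Q$ have vertices $q_0,\dots,q_4$ in cyclic order.
\begin{itemize}
\item If $P$ covers at most four vertices, it is an arc. The only rim edge joining two of its vertices that it does not use is the edge between its endpoints, and this exists only when $P$ covers all five vertices.
\item If $P$ covers all five vertices, the unused rim edge is $xy$, and it is a chord.
\end{itemize}
Hence at most one chord is counted in $Q$, which gives (2).

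\emph{Edge cases.} Two situations need separate care. The first is when $C$ meets the spine in a single block ($j=\ell$); there this block is both ends and the same bound of at most one chord applies. The second is when $C$ does not touch the spine at all, which is impossible: a path between two leaf blocks through inter-block edges must pass through spine blocks, and it cannot stay inside a single leaf block because $v$ has two neighbours on $C$ in distinct blocks, or else $C$ would be a cycle inside one leaf block together with $v$.

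The main obstacle I expect is the careful justification that $C-v$ traverses each block in exactly one contiguous rim path. That is, I must show it cannot revisit a block. This follows because every inter-block edge is a cut edge of $G_m^0$, so a path that leaves a block through one can never come back to it. After that, the pentagon count above is mechanical.
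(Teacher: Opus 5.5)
Your argument is correct and is essentially the paper's proof. You take $P=C\cap Q$ as a single rim path, note that a chord counted in $Q$ can only be the rim edge joining the two endpoints of $P$, and conclude there is none for internal blocks (endpoints $a,c$ are nonadjacent) and at most one in an end block; your cut-edge justification that $C-v$ meets each block in one contiguous arc makes explicit what the paper asserts.

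One small slip, which does not affect the lemma: $C$ can avoid the spine entirely, e.g.\ the triangle formed by $v$ and two adjacent non-attachment vertices of a leaf pentagon, so your claim that this is impossible is false. In that case no spine block is visited and the statement holds vacuously.
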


\begin{proof}
Let $Q$ be a visited spine block, and let $P:=C\cap Q$.
Then $P$ is a rim path in the pentagon $Q$, and every chord counted in $Q$ must be a pentagon edge of $Q$ joining the two endpoints of $P$ while not itself lying on $P$.

If $Q=S_i$ is internal on the visited spine segment, then the endpoints of $P$ are $S_i[a]$ and $S_i[c]$.
These are not adjacent on the pentagon, so no chord of $C$ is counted in $Q$.

If $Q$ is one of the two end spine blocks on the visited spine segment, let $p,q$ be the two endpoints of $P$.
Regardless of which inter-block edges are used to enter and leave $Q$, the only possible chord counted in $Q$ is the pentagon edge $pq$.
This can occur only when $p$ and $q$ are adjacent on the rim and $P$ is the longer rim path between them, yielding at most $1$ chord.
\end{proof}

\begin{proposition}\label{p1091:prop:chord-bound}
Every cycle $C$ in $G_m$ satisfies $\ch_{G_m}(C)\le 10$.
\end{proposition}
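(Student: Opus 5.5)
The plan is to split by whether $C$ passes through $v$, and in the second case to localize all chords to at most two leaf blocks and at most two spine blocks, then add up the local bounds from the preceding lemmas.

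\emph{Cycles avoiding $v$.} If $v\notin C$, then $C$ is a cycle of $G_m^0$. By Lemma~\ref{p1091:lem:cycle-in-base} it is a pentagon block. A pentagon has no chords in $G_m^0$, and no edge at $v$ can be a chord of $C$. So $\ch_{G_m}(C)=0$.

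\emph{Structure of a cycle through $v$.} Now let $v\in C$. Removing $v$ from $C$ leaves a path $R$ in the forest-like graph $G_m^0$, and $R$ runs between two neighbors of $v$. Each neighbor of $v$ is a non-attachment vertex of some leaf block. Contract each pentagon of $G_m^0$ to a node; this gives the block tree, and every inter-block edge is a cut edge. Hence $R$ projects to a path in the block tree, and $R$ meets each block in a single rim path. I would then make two observations.
\begin{itemize}
\item If both endpoints of $R$ lie in the same leaf block, then $C$ lies in that leaf block together with $v$, and Lemma~\ref{p1091:lem:endpoint-leaf} (or a direct count: at most $3$ chords to $v$ and one rim edge) bounds the chords by $4$.
\item Otherwise, the blocks visited by $R$ are a start leaf block $L$, then $L$'s spine block, then a consecutive segment of spine blocks, then the spine block of an end leaf block $L'$, and finally $L'$. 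Since leaf blocks are leaves of the block tree, no intermediate leaf block can be visited.
\end{itemize}

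\emph{Classifying the chords.} Every chord is an edge of $G_m$ between two non-consecutive vertices of $C$, so it has one of three types.
\begin{itemize}
\item It is incident to $v$. The other endpoint then lies in a leaf block meeting $C$, which must be $L$ or $L'$.
\item It is an inter-block edge. Each such edge is a cut edge of $G_m^0$. If both endpoints lie on $R$, the unique path between them in $G_m^0$ is that edge itself, so it lies on $R$ and is not a chord. This is the one place where care is needed: the tree structure rules out these chords entirely.
\item It lies inside a single pentagon. It is then counted in that block, in the sense of Lemma~\ref{p1091:lem:endpoint-leaf} or Lemma~\ref{p1091:lem:spine-contrib}.
\end{itemize}

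\emph{Summing.} Lemma~\ref{p1091:lem:endpoint-leaf} gives at most $4$ chords counted in each of $L$ and $L'$, for $8$ in total. Lemma~\ref{p1091:lem:spine-contrib} gives $0$ chords for internal spine blocks and at most $1$ for each of the two end spine blocks of the visited segment, for $2$ in total. If the segment consists of a single spine block, both ends coincide and that block contributes at most $1$. The total is at most $10$.

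The main obstacle is making sure the per-block lemmas apply verbatim: each visited block must meet $C$ in exactly one rim path whose endpoints are the entry and exit vertices. I would derive this from the cut-edge structure: $R$ cannot leave a block and return to it, since it would have to cross the same cut edge twice.
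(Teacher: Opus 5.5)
Your proposal is correct and follows essentially the same route as the paper: split on whether $v\in C$, use the cut-edge structure to see that $C-v$ visits a path of blocks with leaf blocks only at the ends, rule out inter-block edges as chords, and sum the local bounds $4+4+1+1$. Your explicit handling of the degenerate cases is a small improvement the paper leaves implicit. These are the case where both ends of $C-v$ lie in the same leaf block, where Lemma~\ref{p1091:lem:endpoint-leaf} does not apply verbatim and your direct count is needed, and the case of a single visited spine block.
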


\begin{proof}
If $v\notin V(C)$, then Lemma~\ref{p1091:lem:cycle-in-base} shows that $C$ lies in one pentagon and has no chords.

Now assume that $v\in V(C)$.
Then $C\backslash\{v\}$ is a path in $G_m^0$.
The blocks visited by this path form a path in the block tree, so there are at most two visited leaf blocks, namely the two ends.
Every other visited block is a spine block.

Now partition all possible chords of $C$ according to their endpoints.
Let $e$ be a chord of $C$.
If one endpoint of $e$ is $v$, then the other endpoint must lie in a visited leaf block, since $v$ is only adjacent to leaf blocks.
In that case $e$ is counted in that leaf block.
If instead both endpoints of $e$ lie in the same visited block $Q$, then $e$ is counted in $Q$.
Finally, suppose the endpoints of $e$ lie in two distinct visited blocks.
Then $e$ cannot be incident to $v$, so it is an edge of $G_m^0$ joining two distinct blocks.
The only such edges are inter-block edges, and any inter-block edge whose endpoints both lie on $C$ is an edge of the path $C\backslash\{v\}$ itself, not a chord.
So this third case never occurs.
Thus every chord of $C$ is counted in exactly one visited leaf or spine block.

By Lemma~\ref{p1091:lem:endpoint-leaf}, at most $4$ chords are counted in each visited leaf block, for a total of at most $8$.
By Lemma~\ref{p1091:lem:spine-contrib}, no chord is counted in an internal spine block, and at most one chord is counted in each of the two end spine blocks.
Thus we conclude $\ch_{G_m}(C)\le 4+4+1+1=10$.
\end{proof}


Combining the preceding results, we conclude the main theorem of this section.

\begin{proof}[Proof of Theorem~\ref{p1091:thm:main}]
By construction there are $4m+6$ pentagon blocks, so
\[
|V(G_m)|=5(4m+6)+1=20m+31.
\]
Lemma~\ref{p1091:lem:k4free} gives the $K_4$-free property.
Propositions~\ref{p1091:prop:not-3-colorable} and Proposition~\ref{p1091:prop:degenerate} imply that $\chi(G_m)=4$ but that every subgraph omitting at least $1$ vertex is $3$-colorable.
Finally, Proposition~\ref{p1091:prop:chord-bound} gives the bound $\ch_{G_m}(C)\le10$
for every cycle $C$ in $G_m$.
\end{proof}

\section[Problem 990]{A counterexample to sparse Erd\H{o}s--Tur\'an}\label{sec:990}

This problem concerns the approximate uniformity of complex arguments for the roots of a polynomial, which is the subject of the famous Erd\H{o}s--Tur\'an theorem \cite{ET}. We provide a counterexample to a proposed strengthening in the case of a sparse polynomial with few non-zero coefficients.

\subsection{Introduction}

Let \(z_1,\dots,z_d\in \C^\ast\) be the zeros of a polynomial
\[
f(z)=\sum_{k=0}^d a_k z^k \qquad (a_0a_d\neq 0),
\]
counted with algebraic multiplicity, and let
\[
N_f([\alpha,\beta))
:=
\#\{1\le j\le d:\Arg(z_j)\in[\alpha,\beta)\}.
\]
Here \(\Arg\) denotes the principal argument in \([0,2\pi)\); below we use the parameters
\[
\nu(f):=\#\{0\le k\le d:a_k\neq 0\},
\qquad
M(f):=\frac{\sum_{k=0}^d |a_k|}{\sqrt{|a_0a_d|}}.
\]
A classical theorem of Erd\H{o}s and Tur\'an \cite{ET} shows that the arguments of the zeros are close to uniformly distributed, with discrepancy bounded by
\[
\left|N_f([\alpha,\beta))-\frac{\beta-\alpha}{2\pi}d\right|
\ll \sqrt{d\log M(f)}.
\]
There are several elegant proofs and variants of this theorem; see for instance Amoroso--Mignotte \cite{AM} and Soundararajan \cite{Sound}.

When \(f\) is sparse, it is natural to ask whether the degree \(d\) can be replaced by the number \(\nu(f)\) of nonzero coefficients. A result of Hayman \cite{Hayman} goes in this direction:
for every \(\nu(f)\)-nomial,
\[
\left|N_f([\alpha,\beta))-\frac{\beta-\alpha}{2\pi}d\right|
\le \nu(f)-1.
\]
See also Hrube\v{s} \cite{Hrubes}, who explains this bound through the number of positive roots of suitable rotated real parts. A bound of size
\[
O\!\left(\sqrt{\nu(f)\log M(f)}\right)
\]
would be a natural sparse strengthening of Erd\H{o}s--Tur\'an as speculated by Erd\H{o}s \cite{Erd64}. We demonstrate that such a strengthening need not hold. 

\begin{theorem}\label{p990:thm:main}
There is no absolute constant \(C>0\) with the following property:
for every polynomial
\[
f(z)=\sum_{k=0}^d a_k z^k \qquad (a_0a_d\neq 0)
\]
and every interval \(0\le \alpha<\beta\le 2\pi\),
\[
\left|N_f([\alpha,\beta))-\frac{\beta-\alpha}{2\pi}d\right|
\le C\sqrt{\nu(f)\log M(f)}.
\]
\end{theorem}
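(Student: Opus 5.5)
The plan is to use the family already announced in the introduction: fewnomials with $\nu(f)=N+2$, bounded $M(f)$, and a positive real root of multiplicity $N+1$. Since the arguments of the roots form a multiset, a root of multiplicity $N+1$ at $\Arg=0$ gives
\[
N_f([0,\beta))\ge N+1
\]
for every $\beta>0$. Letting $\beta\to0^+$ (with $d$ fixed), the expected count $\frac{\beta}{2\pi}d$ tends to $0$, so the discrepancy is at least $N+1 - o(1)$. On the other hand $\sqrt{\nu(f)\log M(f)}\ll\sqrt{N}$ if $M(f)$ is bounded, and $N+1\gg C\sqrt{N+2}$ for large $N$. This contradicts any fixed $C$.

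The construction must therefore give a polynomial with $N+2$ terms that vanishes to order $N+1$ at some $z=x_0>0$. I would take $f(z)=\sum_{j=0}^{N+1}c_j z^{e_j}$ with exponents $0=e_0<e_1<\dots<e_{N+1}=d$, and require $f^{(i)}(1)=0$ for $0\le i\le N$. Equivalently,
\[
\sum_j c_j\, e_j^{i}=0 \qquad (0\le i\le N),
\]
which is a Vandermonde system of $N+1$ equations in $N+2$ unknowns. Its solution is unique up to scaling and is given by
\[
c_j=\frac{(-1)^j}{\prod_{k\ne j}(e_j-e_k)},
\]
the standard divided-difference weights. All $c_j$ are nonzero, so $\nu(f)=N+2$, and $a_0a_d\ne0$. (With $e_j=j$ one just gets $(z-1)^{N+1}$, whose coefficients are binomial; there $\log M\asymp N$ and the bound is not violated. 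So the exponents must be spread out.)

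The key step, and the main obstacle, is choosing exponents so that $M(f)$ stays bounded, i.e. so that
\[
\frac{\sum_j|c_j|}{\sqrt{|c_0c_{N+1}|}}=O(1).
\]
My first choice is geometrically growing exponents $e_j=\lambda^j$ (or $e_j=\lambda^{j}-1$ with $e_0=0$) for large $\lambda$. Then for the index $j$,
\[
\prod_{k\ne j}|e_j-e_k|\approx \lambda^{j\cdot j}\cdot\prod_{k>j}\lambda^{k},
\]
since $e_j$ dominates smaller exponents and larger exponents dominate $e_j$. This gives $\log|c_j|^{-1}\approx \log\lambda\,\bigl(j^2+\sum_{k>j}k\bigr)$. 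The exponent is a convex quadratic in $j$, so $|c_j|$ is a unimodal-type sequence, and I need to compare $\sum_j|c_j|$ with the geometric mean of the endpoint values. Writing $S=\sum_{k\le N+1}k$, we have $\sum_{k>j}k=S-j(j+1)/2$, so the exponent is $(j^2-j)/2+S$. Hence the maximum of $|c_j|$ occurs at $j=0,1$ with value $\lambda^{-S}$, while $|c_{N+1}|\approx\lambda^{-S-N(N+1)/2}$. Then $M\approx\lambda^{N(N+1)/4}$, which is not bounded but gives $\log M\asymp N^2$. That is too big.

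So I would instead rescale the variable: replace $z$ by $z/x_0$, i.e. put the root at a general $x_0>0$ and choose $x_0$ to balance $|a_0|$ and $|a_d|$. This amounts to optimizing $\sum_j|c_j|x_0^{e_j}\big/\sqrt{|c_0c_{N+1}|x_0^{d}}$. If that still fails, I would try exponents chosen so that the divided-difference weights are nearly flat in a weighted sense, for instance $e_j$ forming a lacunary sequence adapted so that $|c_j|x_0^{e_j}$ is maximized at both ends simultaneously. I would verify this by checking that $\log(|c_j|x_0^{e_j})$ is concave along $j$ with equal endpoint values, so the sum is dominated by the two endpoints up to a geometric series. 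Establishing this boundedness estimate is where the real work lies. Once it holds, the contradiction argument in the first paragraph completes the proof.
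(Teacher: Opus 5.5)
\textbf{Gap: you never prove that $M(f)$ stays bounded.} Your reduction and your candidate family are essentially the paper's. A polynomial with $\nu(f)=N+2$, a positive root of multiplicity $N+1$, and bounded $M(f)$ gives discrepancy $>N$ on a short interval $[0,\beta)$, against $C\sqrt{(N+2)\log M(f)}$. The paper's $f_{N,K}$ is exactly the divided-difference kernel vector on the lacunary support $0,1,K,\dots,K^N$, rescaled so the root sits at some $x_0>0$.

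The missing piece is the one nontrivial claim: that $M(f)$ stays bounded, or even that $\log M(f)=o(N)$, which is all your first paragraph needs. You show the unscaled version has $M\approx\lambda^{N(N+1)/4}$, say you \emph{would} rescale, and hedge with a fallback. As written, no counterexample has been exhibited.

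The criterion you propose for this check is also backwards. If $j\mapsto\log(|c_j|x_0^{e_j})$ were concave with equal endpoint values, every interior term would be at least as large as the endpoint terms, which is the opposite of endpoint domination. In the working construction the profile is not concave: after normalization, the terms $j=0,1,N+1$ are of order $1$ and all others are $O(\lambda^{-1})$.

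There is also a sign slip in your weights. $\prod_{k\ne j}(e_j-e_k)$ already has sign $(-1)^{N+1-j}$, so $(-1)^j/\prod_{k\ne j}(e_j-e_k)$ has constant sign and violates $\sum_j c_j=0$. The kernel vector is $c_j=1/\prod_{k\ne j}(e_j-e_k)$.

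\textbf{How to close it.} The missing estimate does hold for your exponents $e_j=\lambda^j-1$ once $\lambda$ is large in terms of $N$; this is the content of Proposition~\ref{p990:prop:height}.
\begin{itemize}
\item Rescale to coefficients $c_j y^{e_j}$, so the root is at $1/y$, and choose $y^{d}=|c_0|/|c_{N+1}|$.
\item Since $e_j-e_k=\lambda^j-\lambda^k$, one gets $|c_j|/|c_0|=\lambda^{-j(j-1)/2}(1+O_N(\lambda^{-1}))$. Hence $y^{d}=\lambda^{N(N+1)/2}(1+O_N(\lambda^{-1}))$, and in particular $y>1$.
\item Because $e_j\le d/\lambda$ for $j\le N$, the middle terms are inflated by at most $y^{e_j}\le y^{d/\lambda}=\lambda^{O(N^2)/\lambda}=1+o(1)$ as $\lambda\to\infty$.
\item Therefore $M(f)=2+\sum_{j=1}^{N}(|c_j|/|c_0|)\,y^{e_j}\to 3$.
\end{itemize}
The paper balances the endpoints with ratio $2$ instead of $1$ and gets the limit $2\sqrt{2}$. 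Choosing $\lambda=\lambda(N)$ so that $M(f)<4$ then completes the argument of your first paragraph.
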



The construction is explicit. For each \(N\ge 1\), a polynomial \(f\) is produced with
\[
\nu(f)=N+2,\qquad M(f)<3,
\]
and with a positive real zero of multiplicity \(N+1\). Since a positive real zero has argument \(0\), the interval \([0,\pi/\deg f)\) then contains at least \(N+1\) zeros, whereas the uniform prediction is only \(1/2\).

\subsection{A Vandermonde identity}

The following elementary lemma will be used.

\begin{lemma}\label{p990:lem:vandermonde}
Let \(s\ge 2\), and let $\alpha_1<\alpha_2<\cdots<\alpha_s$
be distinct real numbers. Put
\[
P_j:=\prod_{\ell\neq j}(\alpha_j-\alpha_\ell),
\qquad
\Delta_j:=|P_j|.
\]
Then
\begin{equation}\label{p990:eq:vandermonde}
\sum_{j=1}^s \frac{(-1)^{s-j}\alpha_j^k}{\Delta_j}
=
\begin{cases}
0,& 0\le k\le s-2,\\[3pt]
1,& k=s-1.
\end{cases}
\end{equation}
\end{lemma}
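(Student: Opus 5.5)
The identity is a restatement of Lagrange interpolation, or equivalently of the divided-difference formula for $x^k$. The first step is to settle the sign of $P_j$. Since the $\alpha_\ell$ are increasing, the factor $\alpha_j-\alpha_\ell$ is negative exactly for $\ell>j$, and there are $s-j$ such indices. Hence $P_j=(-1)^{s-j}\Delta_j$, so $(-1)^{s-j}/\Delta_j=1/P_j$. The left side of \eqref{p990:eq:vandermonde} therefore becomes
\[
\sum_{j=1}^s \frac{\alpha_j^k}{P_j}.
\]

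Next I apply Lagrange interpolation. For a polynomial $g$ of degree at most $s-1$,
\[
g(x)=\sum_{j=1}^s g(\alpha_j)\prod_{\ell\ne j}\frac{x-\alpha_\ell}{\alpha_j-\alpha_\ell}.
\]
This holds because both sides have degree at most $s-1$ and agree at the $s$ distinct nodes $\alpha_1,\dots,\alpha_s$. Comparing coefficients of $x^{s-1}$ gives
\[
[x^{s-1}]\,g=\sum_{j=1}^s \frac{g(\alpha_j)}{P_j}.
\]
Take $g(x)=x^k$ with $0\le k\le s-1$. The coefficient of $x^{s-1}$ is $0$ when $k\le s-2$ and $1$ when $k=s-1$, which is exactly the claimed identity.

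If a more self-contained route is preferred, there is an alternative. Consider the rational function $x^k/\prod_\ell(x-\alpha_\ell)$. Its partial fraction expansion has simple poles at the $\alpha_j$ with residues $\alpha_j^k/P_j$. Since it decays like $x^{k-s}$, the sum of the residues equals the coefficient of $x^{-1}$ at infinity, which is $0$ for $k\le s-2$ and $1$ for $k=s-1$.

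There is no real obstacle here. The only point needing care is the sign bookkeeping $P_j=(-1)^{s-j}\Delta_j$, which uses the ordering $\alpha_1<\cdots<\alpha_s$.
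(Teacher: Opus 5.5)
Your proposal is correct and matches the paper's proof: Lagrange interpolation of $x^k$ on the $s$ nodes, comparison of the $x^{s-1}$ coefficients to get $\sum_j \alpha_j^k/P_j$, and the sign identity $P_j=(-1)^{s-j}\Delta_j$ (the paper just does the sign step last rather than first). Your residue-at-infinity alternative is also valid, but it is only a side remark.
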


\begin{proof}
Lagrange interpolation gives, for every polynomial \(q\) of degree at most \(s-1\),
\[
q(x)
=
\sum_{j=1}^s q(\alpha_j)\,
\frac{\prod_{\ell\neq j}(x-\alpha_\ell)}{P_j}.
\]
The numerator in the \(j\)-th term is monic of degree \(s-1\). Therefore the coefficient of \(x^{s-1}\) on the right-hand side is $\sum_{j=1}^s \frac{q(\alpha_j)}{P_j}$.
Choosing \(q(x)=x^k\) gives
\[
\sum_{j=1}^s \frac{\alpha_j^k}{P_j}
=
\begin{cases}
0,& 0\le k\le s-2,\\[3pt]
1,& k=s-1.
\end{cases}
\]
Finally, since \(\alpha_1<\cdots<\alpha_s\), the product \(P_j\) contains exactly \(s-j\) negative factors, so
\[
P_j=(-1)^{s-j}\Delta_j.
\]
Substituting this into the previous identity yields \eqref{p990:eq:vandermonde}.
\end{proof}

\subsection{The construction}

Fix an integer \(N\ge 1\) and an integer \(K\ge 2\). Set
\[
s:=N+2,
\qquad
d:=K^N,
\qquad
\varepsilon:=K^{-1}.
\]
Choose the exponents
\[
m_1=0,\qquad
m_{i+1}=K^{\,i-1}\ \ (1\le i\le N),
\qquad
m_s=K^N=d.
\]
Thus the support is
\[
0,\,1,\,K,\,K^2,\dots,K^{N-1},\,K^N.
\]
Next put $\lambda_j:=\frac{m_j}{d}$
so that
\[
\lambda_1=0,\qquad
\lambda_{i+1}=\varepsilon^{N+1-i}\ \ (1\le i\le N),
\qquad
\lambda_s=1.
\]
In particular,
\[
0=\lambda_1<\lambda_2<\cdots<\lambda_s=1.
\]

Define
\[
\Delta_j:=\prod_{\ell\neq j}|\lambda_j-\lambda_\ell|,
\qquad
A_1:=\sqrt{\frac{\Delta_s}{\Delta_1}},
\qquad
T:=\frac{1}{\sqrt{2}\,A_1},
\qquad
\tau:=2\log T.
\]
Finally, set
\begin{equation}\label{p990:eq:def-cj}
c_j:=\frac{(-1)^{s-j}e^{-\lambda_j\tau}}{\Delta_j}
\qquad (1\le j\le s),
\end{equation}
and define the lacunary polynomial
\[
f_{N,K}(z):=\sum_{j=1}^s c_j z^{m_j}.
\]
By construction \(f_{N,K}\) has exactly \(s=N+2\) nonzero coefficients, so $\nu(f_{N,K})=N+2$.

\begin{remark}
The endpoint coefficients are especially simple. Since \(\lambda_1=0\), \(\lambda_s=1\), and
\[
e^{-\tau}=T^{-2}=2A_1^2=\frac{2\Delta_s}{\Delta_1},
\]
one has $c_1=\frac{(-1)^{s-1}}{\Delta_1}$ and $c_s=\frac{2}{\Delta_1}$.
In particular, all coefficients are real and nonzero.
\end{remark}

The next lemma explains the choice of coefficients.

\begin{lemma}\label{p990:lem:multiple-root}
Let
\[
F_{N,K}(u):=\sum_{j=1}^s c_j e^{\lambda_j u}.
\]
Then \(F_{N,K}\) has a zero of order exactly \(s-1=N+1\) at \(u=\tau\). Consequently,
\[
x_0:=e^{\tau/d}>0
\]
is a zero of \(f_{N,K}\) of multiplicity \(N+1\).
\end{lemma}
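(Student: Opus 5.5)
The approach is to differentiate $F_{N,K}$ term by term and evaluate at $u=\tau$. For $0\le k\le s-1$,
\[
F_{N,K}^{(k)}(\tau)=\sum_{j=1}^s c_j\lambda_j^k e^{\lambda_j\tau}=\sum_{j=1}^s \frac{(-1)^{s-j}\lambda_j^k}{\Delta_j}.
\]
This holds because the factor $e^{-\lambda_j\tau}$ built into $c_j$ in \eqref{p990:eq:def-cj} exactly cancels $e^{\lambda_j\tau}$. Since $0=\lambda_1<\cdots<\lambda_s=1$ are distinct reals and $\Delta_j=\prod_{\ell\ne j}|\lambda_j-\lambda_\ell|$, Lemma~\ref{p990:lem:vandermonde} applies with $\alpha_j=\lambda_j$. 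It gives $F_{N,K}^{(k)}(\tau)=0$ for $0\le k\le s-2$ and $F_{N,K}^{(s-1)}(\tau)=1\neq 0$. Hence $u=\tau$ is a zero of order exactly $s-1=N+1$. Here $\tau$ is a well-defined real number, because $T>0$.

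To transfer this to $f_{N,K}$, substitute $z=e^{u/d}$. Since $m_j=d\lambda_j$, we get $f_{N,K}(e^{u/d})=\sum_j c_j e^{\lambda_j u}=F_{N,K}(u)$. The map $\phi(u)=e^{u/d}$ is analytic with $\phi'(\tau)=e^{\tau/d}/d\neq 0$, so it is a local biholomorphism near $\tau$. Composing $f_{N,K}$ with it therefore preserves the order of vanishing: if $f_{N,K}(z)=(z-x_0)^r g(z)$ with $g(x_0)\ne0$, then $F_{N,K}(u)=(\phi(u)-\phi(\tau))^r g(\phi(u))$, and $\phi(u)-\phi(\tau)$ has a simple zero at $\tau$. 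Thus $F_{N,K}$ vanishes at $\tau$ to order $r$, and so $r=N+1$.

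One could instead argue directly using the operator $z\frac{d}{dz}$. Applying it $k$ times to $f_{N,K}$ gives $\sum_j c_j m_j^k z^{m_j}$. At $x_0$ this equals $d^k F_{N,K}^{(k)}(\tau)$, and the multiplicity follows from the triangular relation between $(z\,d/dz)^k$ and $z^k d^k/dz^k$. I would use the chain-rule argument above.

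The only point needing care is that the order of vanishing is preserved under the change of variables. This is routine because $\phi'(\tau)\neq0$. Otherwise the statement is essentially a direct reading of Lemma~\ref{p990:lem:vandermonde}.
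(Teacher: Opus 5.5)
Your proof is correct and follows the paper's argument: compute $F_{N,K}^{(k)}(\tau)$, apply Lemma~\ref{p990:lem:vandermonde} to get a zero of exact order $s-1$, then transfer the multiplicity through a local biholomorphic change of variables. The only cosmetic difference is the direction of that change of variables: you compose $f_{N,K}$ with the entire map $u\mapsto e^{u/d}$, so no branch of $\log$ is needed, and you spell out why the order is preserved. The paper instead composes $F_{N,K}$ with a local branch of $z\mapsto d\log z$ near $x_0$.
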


\begin{proof}
Differentiating \(F_{N,K}\) gives
\[
F_{N,K}^{(k)}(\tau)
=
\sum_{j=1}^s c_j \lambda_j^k e^{\lambda_j\tau}
=
\sum_{j=1}^s \frac{(-1)^{s-j}\lambda_j^k}{\Delta_j}.
\]
Applying Lemma \ref{p990:lem:vandermonde} with \(\alpha_j=\lambda_j\) gives
\[
F_{N,K}^{(k)}(\tau)=0 \quad (0\le k\le s-2),
\qquad
F_{N,K}^{(s-1)}(\tau)=1.
\]
Hence \(u=\tau\) is a zero of exact order \(s-1\) of \(F_{N,K}\).
Next we reparametrize $F_{N,K}$ to be a polynomial. Let $x_0=e^{\tau/d}$.
For \(x>0\) one has
\[
F_{N,K}(d\log x)
=
\sum_{j=1}^s c_j e^{\lambda_j d\log x}
=
\sum_{j=1}^s c_j x^{m_j}
=
f_{N,K}(x).
\]
Fix a small disc \(U\subset \C\setminus\{0\}\) centered at \(x_0\), and a holomorphic branch of \(\log\) on \(U\). Then
\[
f_{N,K}(z)=F_{N,K}(d\log z),\quad\forall z\in U.
\]
Since \(z\mapsto d\log z\) is biholomorphic near \(x_0\), multiplicity is preserved. Therefore \(x_0\) is a zero of \(f_{N,K}\) of exact multiplicity \(s-1=N+1\).
\end{proof}

\begin{example}
The first nontrivial case is \(N=2\), for which the support is
\[
0,\,1,\,K,\,K^2.
\]
Then \(f_{2,K}\) has four nonzero terms and a triple positive root. For instance, when \(K=20\),
\[
f_{2,20}(z)\approx -8000+8647.7946547\,z-717.2170502\,z^{20}+16000\,z^{400},
\]
and numerically
\[
M(f_{2,20})\approx 2.9491,
\qquad
x_0=e^{\tau/400}\approx 0.9762209
\]
is a root of multiplicity \(3\).
\end{example}

\subsection{Bounded height}

The key point is that \(M(f_{N,K})\) stays bounded as \(K\to\infty\) for fixed \(N\).

\begin{proposition}\label{p990:prop:height}
For every fixed \(N\ge 1\),
\[
M(f_{N,K})\longrightarrow 2\sqrt{2}
\qquad\text{as }K\to\infty.
\]
\end{proposition}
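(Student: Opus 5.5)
The plan is to compute $M(f_{N,K})$ in closed form from \eqref{p990:eq:def-cj}, split the sum of $|c_j|$ into endpoint and interior terms, and take the limit $K\to\infty$ with $N$ fixed, i.e.\ $\varepsilon=K^{-1}\to0$.

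\textbf{Closed form.} By the Remark, $|c_1|=1/\Delta_1$ and $|c_s|=2/\Delta_1$, so $\sqrt{|c_1c_s|}=\sqrt2/\Delta_1$. Therefore
\[
M(f_{N,K})=\frac{\Delta_1}{\sqrt2}\sum_{j=1}^s\frac{e^{-\lambda_j\tau}}{\Delta_j}
=\frac{1}{\sqrt2}\Bigl(3+\sum_{j=2}^{s-1}\frac{\Delta_1}{\Delta_j}e^{-\lambda_j\tau}\Bigr).
\]
The $j=1$ and $j=s$ terms contribute $1$ and $2$ respectively; the second uses $e^{-\tau}=2\Delta_s/\Delta_1$. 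Since $2\sqrt2=4/\sqrt2$, it suffices to show that the interior sum tends to $1$. Concretely, the $j=2$ term should tend to $1$ and every term with $3\le j\le s-1$ should tend to $0$.

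\textbf{Size of $\tau$.} First I show the exponential factors $e^{-\lambda_j\tau}$ tend to $1$ for all interior $j$.
\begin{itemize}
\item Since all interior $\lambda_\ell$ are at most $\varepsilon$, we have $\Delta_s=\prod_{\ell<s}(1-\lambda_\ell)\to1$.
\item We have $\Delta_1=\prod_{\ell\ge2}\lambda_\ell=K^{-\sum_{i=1}^N(N+1-i)}$, so $|\log\Delta_1|=O_N(\log K)$.
\item Hence $\tau=\log(\Delta_1/(2\Delta_s))$ satisfies $|\tau|=O_N(\log K)$.
\item For $2\le j\le s-1$ we have $\lambda_j\le\varepsilon=1/K$, so $|\lambda_j\tau|=O_N(\log K/K)\to0$.
\end{itemize}

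\textbf{Vandermonde ratios.} Next I compare $\Delta_j$ with $\Delta_1$, using the ordering $\lambda_2<\dots<\lambda_{s-1}$ and the gaps $\lambda_\ell/\lambda_{\ell+1}=\varepsilon$.

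For $j=2$, write
\[
\Delta_2=\lambda_2\prod_{\ell\ge3}(\lambda_\ell-\lambda_2)=\Delta_1\prod_{\ell\ge3}\Bigl(1-\frac{\lambda_2}{\lambda_\ell}\Bigr).
\]
Each factor is at least $1-\varepsilon$ and there are $N$ of them, so $\Delta_1/\Delta_2\to1$.

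For $3\le j\le s-1$, factor each term of $\Delta_j$ by its dominant size:
\begin{itemize}
\item $|\lambda_j-\lambda_1|=\lambda_j$;
\item $|\lambda_j-\lambda_\ell|=\lambda_j(1+O(\varepsilon))$ for $2\le \ell<j$;
\item $|\lambda_\ell-\lambda_j|=\lambda_\ell(1+O(\varepsilon))$ for $\ell>j$.
\end{itemize}
This gives
\[
\frac{\Delta_1}{\Delta_j}=(1+O(\varepsilon))\prod_{2\le\ell<j}\frac{\lambda_\ell}{\lambda_j}\le(1+O(\varepsilon))\,\varepsilon\to0,
\]
since the product is nonempty and each ratio is at most $\varepsilon$. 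All $O(\cdot)$ constants depend only on $N$.

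Combining the three steps yields $M(f_{N,K})\to(3+1)/\sqrt2=2\sqrt2$. The main obstacle is the careful bookkeeping in the Vandermonde ratios, especially the exact cancellation showing $\Delta_1/\Delta_2\to1$ rather than some other constant. The $\tau$ estimate is routine once one notices that $|\tau|$ grows only logarithmically in $K$. When $N=1$ there are no indices $j\ge3$ with $j\le s-1$, and only the $j=2$ term appears.
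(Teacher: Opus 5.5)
Your proof is correct and follows essentially the same route as the paper. Your normalized formula $M(f_{N,K})=\frac{1}{\sqrt2}\sum_j \frac{\Delta_1}{\Delta_j}e^{-\lambda_j\tau}$ is exactly the paper's $\sum_j A_jT^{1-2\lambda_j}$, since $T\sqrt{\Delta_1\Delta_s}=\Delta_1/\sqrt2$; like the paper, you then use $|\tau|=O_N(\log K)$ to remove the exponential factors and Vandermonde-ratio estimates to isolate the $j=2$ term, with the minor simplification that you only bound $\Delta_1/\Delta_j\le(1+O_N(\varepsilon))\varepsilon$ for $3\le j\le s-1$ instead of computing its exact order $\varepsilon^{(j-1)(j-2)/2}$.
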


\begin{proof}
Fix \(N\), and write \(\varepsilon=K^{-1}\to 0\). Here and below, \(O_N(\varepsilon)\)
means a quantity whose absolute value is at most \(C_N\varepsilon\) for some constant
\(C_N\) depending only on this fixed \(N\). Define
\[
A_j:=\frac{\sqrt{\Delta_1\Delta_s}}{\Delta_j}
\qquad (1\le j\le s).
\]
Using \eqref{p990:eq:def-cj} and \(T=e^{\tau/2}\) gives the exact formula
\begin{equation}\label{p990:eq:M-exact}
M(f_{N,K})=\sum_{j=1}^s A_j\,T^{\,1-2\lambda_j}.
\end{equation}

It is convenient to write
\[
x_i:=\lambda_{i+1}=\varepsilon^{N+1-i}
\qquad (1\le i\le N).
\]
Then
\[
\Delta_1=\prod_{i=1}^N x_i=\varepsilon^{N(N+1)/2},
\qquad
\Delta_s=\prod_{i=1}^N(1-x_i)=1+O_N(\varepsilon).
\]
For \(1\le i\le N\), the factors in \(\Delta_{i+1}\) are
\[
|x_i-0|=x_i,\qquad |1-x_i|=1-x_i,
\]
\[
|x_i-x_r|=x_i(1-\varepsilon^{\,i-r}) \quad (r<i),
\qquad
|x_i-x_r|=x_r(1-\varepsilon^{\,r-i}) \quad (r>i).
\]
Hence
\[
\Delta_{i+1}
=
x_i(1-x_i)
\prod_{r=1}^{i-1}x_i(1-\varepsilon^{\,i-r})
\prod_{r=i+1}^{N}x_r(1-\varepsilon^{\,r-i})
=
x_i^{\,i}\prod_{r=i+1}^{N}x_r\,\bigl(1+O_N(\varepsilon)\bigr).
\]
Dividing by \(\Delta_1\) gives
\begin{equation}\label{p990:eq:ratio-deltas}
\frac{\Delta_{i+1}}{\Delta_1}
=
\varepsilon^{-i(i-1)/2}\bigl(1+O_N(\varepsilon)\bigr)
\qquad (1\le i\le N).
\end{equation}
Therefore
\begin{equation}\label{p990:eq:A1}
A_1
=
\sqrt{\frac{\Delta_s}{\Delta_1}}
=
\varepsilon^{-N(N+1)/4}\bigl(1+O_N(\varepsilon)\bigr),
\end{equation}
and, with
\[
R_i:=\frac{A_{i+1}}{A_1}=\frac{\Delta_1}{\Delta_{i+1}},
\]
one has
\begin{equation}\label{p990:eq:Ri}
R_i
=
\varepsilon^{i(i-1)/2}\bigl(1+O_N(\varepsilon)\bigr)
\qquad (1\le i\le N).
\end{equation}
In particular,
\[
R_1\to 1,
\qquad
R_i\to 0 \quad (2\le i\le N).
\]

There are also the exact identities
\begin{equation}\label{p990:eq:endpoint-id}
A_sA_1=1,
\qquad
A_1T=\frac{1}{\sqrt{2}}.
\end{equation}
Next, since \(T=(\sqrt{2}A_1)^{-1}\), \eqref{p990:eq:A1} implies
\[
|\log T|=O_N(|\log\varepsilon|).
\]
Because \(x_i\le \varepsilon\), it follows that
\[
|2x_i\log T|=O_N(\varepsilon|\log\varepsilon|)\to 0,
\]
and so
\begin{equation}\label{p990:eq:small-power}
T^{-2x_i}=1+o(1)
\qquad (1\le i\le N),
\end{equation}
uniformly for fixed \(N\).

Now split \eqref{p990:eq:M-exact} into the first term, the middle terms, and the last term:
\[
M(f_{N,K})
=
A_1T+\sum_{i=1}^N A_{i+1}T^{1-2x_i}+A_sT^{-1}.
\]
Using \eqref{p990:eq:endpoint-id}, \eqref{p990:eq:Ri}, and \eqref{p990:eq:small-power}, this gives
\[
M(f_{N,K})
=
\frac{1}{\sqrt{2}}
+
\frac{1}{\sqrt{2}}\sum_{i=1}^N R_i\,T^{-2x_i}
+
\sqrt{2}.
\]
The term \(i=1\) tends to \(1/\sqrt{2}\), while every term \(i\ge 2\) tends to \(0\). Therefore
\[
M(f_{N,K})\longrightarrow
\frac{1}{\sqrt{2}}+\frac{1}{\sqrt{2}}+\sqrt{2}
=
2\sqrt{2}.
\]
This proves the proposition.
\end{proof}

\subsection{Proof of the main theorem}

The argument can now be completed.

\begin{proof}[Proof of Theorem \ref{p990:thm:main}]
Fix \(N\ge 1\). Only fixed-\(N\) asymptotics are needed here, since \(K\) will be
chosen after \(N\) has been fixed. By Proposition \ref{p990:prop:height}, there exists
\(K=K(N)\) so large that
\[
M(f_{N,K})<3.
\]
Set
\[
f_N:=f_{N,K(N)},
\qquad
d_N:=K(N)^N.
\]
By construction,
\[
\nu(f_N)=N+2,
\qquad
M(f_N)<3.
\]
By Lemma \ref{p990:lem:multiple-root}, the polynomial \(f_N\) has a positive real zero \(x_{0,N}\) of multiplicity \(N+1\). Every copy of this zero has principal argument \(0\).

Consider the interval
\[
I_N:=\left[0,\frac{\pi}{d_N}\right).
\]
Its expected number of zeros under uniform angular distribution is
\[
\frac{|I_N|}{2\pi}d_N
=
\frac{\pi/d_N}{2\pi}d_N
=
\frac{1}{2}.
\]
However \(I_N\) contains all \(N+1\) copies of the positive zero \(x_{0,N}\), so $N_{f_N}(I_N)\ge N+1$ and thus
\[
\left|N_{f_N}(I_N)-\frac{|I_N|}{2\pi}d_N\right|
=
N_{f_N}(I_N)-\frac12
\ge N+\frac12.
\]

Suppose, for contradiction, that the theorem were false. Then there would exist an absolute constant \(C>0\) such that
\[
\left|N_f(I)-\frac{|I|}{2\pi}\deg f\right|
\le C\sqrt{\nu(f)\log M(f)}
\]
for every polynomial \(f\) and every interval \(I\). Applying this to \(f_N\) and \(I_N\) gives
\[
N+\frac12
\le
C\sqrt{(N+2)\log M(f_N)}
\le
C\sqrt{(N+2)\log 3}.
\]
This is impossible for large \(N\), hence no such absolute constant \(C\) exists.
\end{proof}

\section[On primes of the form n-a k squared]{On primes of the form \texorpdfstring{$n-a k^2$}{n-a k squared}}\label{sec:nt}

Fix an integer $a\ge 1$. Let $P_a(n)$ denote the property that
\[
n-a k^2 \text{ is prime for every integer }k\ge 1\text{ with }(k,n)=1\text{ and }a k^2<n.
\]
We prove that for each fixed $a$, only finitely many integers satisfy $P_a(n)$. 
The case $a=1$ is Erd\H{o}s's Problem~1141 \cite{Erd99,BloWeb}.
The proof is a short deduction from Pollack's theorem \cite[Theorem~1.3]{Pollack17}.

\begin{theorem}\label{thm:nt}
Fix $a\ge 1$. There are only finitely many integers $n$ such that $P_a(n)$ holds.
\end{theorem}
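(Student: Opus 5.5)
The plan is to produce, for every large $n$ with $P_a(n)$, an admissible $k$ for which $n-ak^2$ has a small prime factor $p$ with $p<n-ak^2$. That contradicts primality, so $P_a(n)$ can hold for only finitely many $n$. The prime $p$ will be a small prime quadratic residue for the Kronecker symbol attached to $an$. This is where I would invoke Pollack's theorem \cite[Theorem~1.3]{Pollack17}. I am recalling its content as: for a nonprincipal quadratic character $\chi$ of conductor $D$, there is a prime $\ell\ll_\varepsilon D^{1/4+\varepsilon}$ with $\chi(\ell)=1$. For the argument below, any exponent strictly less than $1/2$ would suffice.

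\emph{Case 1: $an$ is not a perfect square.} Let $\chi(\cdot)=\left(\frac{4an}{\cdot}\right)$. This is a nonprincipal quadratic character of conductor at most $4an$. Pollack's theorem gives a prime $p\ll_\varepsilon (an)^{1/4+\varepsilon}$ with $\chi(p)=1$. In particular $p\nmid 2an$, and $an$ is a nonzero square modulo $p$. Hence $n\equiv a k_0^2 \pmod p$ for some $k_0\not\equiv 0 \pmod p$, since $a$ is invertible modulo $p$.

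I then need an integer $k\equiv k_0\pmod p$ with $1\le k$, $ak^2<n$, $(k,n)=1$, and $n-ak^2>p$. To find one, I would count $k\le X:=\sqrt{n/(2a)}$ in that residue class and coprime to $n$, by Möbius inversion over the divisors of $n$ (all coprime to $p$, since $p\nmid n$). The count is
\[
\frac{X}{p}\cdot\frac{\varphi(n)}{n}+O\bigl(2^{\omega(n)}\bigr).
\]
Using $\varphi(n)/n\gg 1/\log\log n$, $2^{\omega(n)}=n^{o(1)}$, and $p\le n^{1/4+2\varepsilon}$, the main term has size $n^{1/4-2\varepsilon-o(1)}$. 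So the count is positive for large $n$. For such a $k$ we have $n-ak^2\ge n/2>p$ and $p\mid n-ak^2$, so $n-ak^2$ is composite, contradicting $P_a(n)$.

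\emph{Case 2: $an=c^2$.} Then $n-ak^2=(c-ak)(c+ak)/a$. Take $k=1$, which is always coprime to $n$ and admissible once $n>a$. Then $a(n-a)=(c-a)(c+a)$, with both factors tending to infinity as $n\to\infty$. If $n-a=q$ were prime, then $aq=(c-a)(c+a)$. Since $q$ is prime, $q$ divides one of the two factors, so the other factor divides $a$ and is therefore at most $a$. This forces $c$, and hence $n$, to be bounded. So only finitely many $n$ arise in this case.

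The main obstacle is to make sure the cited form of Pollack's theorem really applies. Two things must be checked: that it covers the character $\left(\frac{4an}{\cdot}\right)$ of composite, possibly non-fundamental modulus (passing to the primitive character inducing it only removes finitely many primes, all dividing $an$, which $\chi(p)=1$ excludes anyway), and that its exponent is below $1/2$ with an implied constant uniform in $n$. Given that, the remaining issue is the routine but slightly fiddly sieve count for $k$ coprime to $n$ in an arithmetic progression. If the error term $2^{\omega(n)}$ were too lossy, I would instead restrict $k$ to primes in the progression $k_0 \bmod p$ not dividing $n$, using Brun--Titchmarsh-type lower bounds.
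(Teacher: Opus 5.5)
Your proof is correct. Its Case 1 is essentially the paper's: the same appeal to Pollack's theorem for the quadratic character modulo $4an$, and the same M\"obius count of $k$ coprime to $n$ in a residue class modulo $p$, where the main term $\frac{X}{p}\frac{\varphi(n)}{n}$ beats the error $O(2^{\omega(n)})$. The only difference there is cosmetic. You cap $k$ at $\sqrt{n/(2a)}$, so that $n-ak^2\ge n/2>p$ and a single admissible $k$ already gives the contradiction. The paper instead uses both roots $r_1,r_2$, observes that each $k\in S$ forces $n-ak^2=p$ (which has at most one solution), and shows $\#S\ge 2$.

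Your Case 2 is genuinely different. The paper reruns the same sieve count with $p$ the least odd prime not dividing $an$, so that $p\ll\log n$ and $ax^2\equiv n \pmod p$ is automatically solvable. You instead take $k=1$ and use the factorization $a(n-a)=(c-a)(c+a)$. A prime $q=n-a$ cannot divide $c-a$, since otherwise $c+a\mid a$. Hence $q\mid c+a$, so $c-a\mid a$, whence $c\le 2a$ and $n\le 4a$. This is purely algebraic and gives an explicit bound in the square case. The paper's version has the merit of handling both cases by one mechanism. In either presentation the theorem stays ineffective because of Case 1.

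On the point you flagged: your parenthetical about passing to the primitive character is not correct as stated. A prime $p$ with $\chi^*(p)=1$ for the primitive character $\chi^*$ can still divide the square part of $an$. For example, if $an=2u^2$ with $7\mid u$, the character of $\mathbb{Q}(\sqrt{2})$ takes the value $1$ at $7$. This is harmless, because Pollack's Theorem~1.3 is stated for an arbitrary quadratic character modulo $m$. Taking $m=4an$, the condition $\chi(p)=1$ itself forces $p\nmid 4an$. The paper also notes that there are at least $(\log m)^A$ such primes while $\omega(m)=o(\log m)$, which covers either reading of the character.
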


\begin{remark}
Due to the use of Siegel's theorem in Pollack's argument, Theorem~\ref{thm:nt} is ineffective. In the original case $a=1$, computational evidence suggests that the maximal such $n$ is $1722$.
\end{remark}

The key input is the following result of Pollack \cite[Theorem~1.3]{Pollack17}.

\begin{theorem}\label{thm:input}
Let $A\ge 1$ and $\eps>0$. Then there exists $M_0= M_0(A,\eps)\ge 1$ such that if $m\ge M_0$ and $\chi$ is a quadratic character modulo $m$, there are at least $(\log m)^{A}$ primes $p\le m^{1/4 + \eps}$ with $\chi(p) = 1$.
\end{theorem}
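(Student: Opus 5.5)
This is a deep result that the paper cites from Pollack rather than proves. Here is the route I would try, starting from the classical Burgess and Siegel ideas. I have not settled the counting step in the third paragraph.

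\emph{Setup.} Put $x:=m^{1/4+\eps}$ and compare two evaluations of the nonnegative convolution sum
\[
S(x):=\sum_{n\le x}(1*\chi)(n)=\sum_{n\le x}\sum_{d\mid n}\chi(d).
\]
First I would reduce to $\chi$ primitive. A quadratic character mod $m$ is induced by a primitive quadratic character of conductor $q\mid m$, and the two agree at primes $p\nmid m$. Primes dividing $m$ number $O(\log m)$, so discarding them costs nothing. If $q$ is small, say $q\le m^{\eps}$, then primes $p\equiv 1 \pmod{4q}$ with $p\nmid m$ already satisfy $\chi(p)=1$. Siegel--Walfisz, or a cruder prime-counting bound in progressions, supplies far more than $(\log m)^A$ of them below $x$. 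So assume $q>m^{\eps}$.

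\emph{Two evaluations of $S(x)$.} By the Dirichlet hyperbola method,
\[
S(x)=L(1,\chi)\,x+O\Big(\sqrt{x}\,\max_{y\le \sqrt x}\Big|\sum_{n\le y}\chi(n)\Big|+\dots\Big).
\]
I would split at a point $U$ rather than $\sqrt x$. Burgess' bound gives $\sum_{n\le y}\chi(n)\ll y^{1-\delta}$ for $y\ge q^{1/4+\eps/2}$, with some $\delta=\delta(\eps)>0$. Together with $\sum_{n>U}\chi(n)/n\ll U^{-\delta}$, this should yield
\[
S(x)=L(1,\chi)x+O(x^{1-\delta'}).
\]
Siegel's theorem gives $L(1,\chi)\gg_{\eta} q^{-\eta}$. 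Taking $\eta$ small against $\delta'$ then gives the lower bound
\[
S(x)\gg x\,m^{-\eta}.
\]
This is the ineffective step mentioned in the remark after Theorem~\ref{thm:nt}.

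\emph{Upper bound from few split primes.} Now suppose the set $\mathcal P$ of primes $p\le x$ with $\chi(p)=1$ has $|\mathcal P|<(\log m)^A$. The function $(1*\chi)$ is multiplicative. At prime powers it equals $1$ when $\chi(p)=0$, vanishes on odd powers and equals $1$ on even powers when $\chi(p)=-1$, and equals $\ell+1$ on $p^\ell$ when $\chi(p)=1$. Hence
\[
S(x)\le\sum_{\substack{a b c^2\le x\\ a\mid m^\infty,\ b\ \mathcal P\text{-smooth}}} d(b).
\]
The square part contributes only $O(\sqrt{x})$. The $m$-smooth part I would control with Rankin's trick at exponent $\sigma=1/2+\eps'$, using $\omega(m)\ll\log m/\log\log m$.

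\emph{Main obstacle.} The hardest step is bounding the $\mathcal P$-supported part. The naive count $\prod_{p\in\mathcal P}(1+\log x/\log p)^2$ is $\exp\big((\log m)^A\log\log m\big)$, which is not small compared with $x$ when $A>1$. So Rankin's trick alone at fixed $\sigma$ will not suffice. What I would try first is Rankin with $\sigma=1-1/\log\log m$. The resulting Euler factor is
\[
\prod_{p\in\mathcal P}(1-p^{-\sigma})^{-2}\le \exp\Big(2\sum_{p\in\mathcal P}p^{-\sigma}\Big)\le \exp\big(O(\log\log m)\big)
\]
if the primes in $\mathcal P$ are spread out. This is exactly where it can fail: the primes of $\mathcal P$ might be the smallest primes, giving $\sum_{p\in\mathcal P}p^{-\sigma}\approx \log\log\big((\log m)^A\big)$. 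I expect that to be harmless, since it would give $S(x)\ll x^{\sigma}(\log m)^{O(A)}$, contradicting $S(x)\gg xm^{-\eta}$ once $x^{1-\sigma}$ beats the logarithmic losses. I would verify this carefully. If it fails, the fallback is Pollack's actual refinement, which I expect to work by shrinking the range to $y=m^{1/4+\eps/2}$ and iterating.
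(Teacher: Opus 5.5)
The paper gives no proof of this statement; it quotes it from Pollack. Your first two steps are sound. The hyperbola method with Burgess gives $S(x)=L(1,\chi)x+O(x^{1-\delta'})$, and Siegel then gives $S(x)\gg_\eta x\,m^{-\eta}$.

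The gap is in the final step. With $\sigma=1-1/\log\log m$, Rankin's trick saves only $x^{1-\sigma}=\exp\bigl((\tfrac14+\eps)\log m/\log\log m\bigr)=m^{o(1)}$. So your bound $S(x)\ll x^{\sigma}(\log m)^{O(A)}$ stays much larger than $x\,m^{-\eta}$ for every fixed $\eta>0$. Beating the logarithmic losses is not the relevant test. Siegel's theorem costs a fixed power of $m$, so the upper bound must save a fixed power of $x$. As written, no contradiction is reached.

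The missing point is that Rankin's trick at a \emph{fixed} exponent does give such a saving. Your remark that fixed $\sigma$ cannot suffice was based on the naive count $\prod_{p\in\mathcal P}(1+\log x/\log p)^2$, not on the Euler product Rankin actually produces.

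Take $\sigma=1-\theta$ with a fixed $0<\theta<\min\{1/2,1/A\}$. Suppose $|\mathcal P|\le K\ll(\log m)^A$, and let $p_K$ denote the $K$-th prime. Since $p^{-\sigma}$ decreases in $p$, we get $\sum_{p\in\mathcal P}p^{-\sigma}\le\sum_{p\le p_K}p^{-1+\theta}\ll_\theta p_K^{\theta}=(\log m)^{A\theta+o(1)}=o(\log m)$, because $A\theta<1$. The same estimate with $K=\omega(m)$ handles the $m$-part, and the square part contributes $\zeta(2\sigma)=O_\theta(1)$. Hence $S(x)\le x^{1-\theta}m^{o(1)}$; this is the familiar phenomenon $\Psi(x,(\log x)^B)=x^{1-1/B+o(1)}$. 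Choosing Siegel's $\eta<\theta(\tfrac14+\eps)$, and also below the Burgess saving, then gives the contradiction.

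A minor point: your separate small-conductor case is unnecessary. Siegel--Walfisz would not reach $q\approx m^{\eps}$ anyway. Burgess (valid in the $q^{1/4+\eps}$ range for quadratic conductors) and Siegel already apply uniformly to every conductor $q\mid m$, since $q\le m$.
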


\begin{proof}[Proof of Theorem~\ref{thm:nt}]
Fix $a\ge 1$, and suppose $P_a(n)$ holds for some sufficiently large $n$.
Write
\[
a n=u^2d,
\qquad
d\text{ squarefree}.
\]
We split into two cases.

\smallskip
\noindent\textit{Case 1: $d>1$ (equivalently, $an$ is not a square).}
Let $\chi$ be the nontrivial quadratic character attached to $\mathbb Q(\sqrt d)$, viewed modulo $4an$.
For every odd prime $p\nmid an$,
\[
\chi(p)=1
\iff d\text{ is a square mod }p 
\iff a x^2\equiv n\pmod p\text{ is solvable}.
\]
With $\omega(m)$ the number of distinct prime factors, it is well known that $\omega(m)\leq o(\log m)$.
Applying Theorem~\ref{thm:input} with $\eps=\tfrac18$, $A=1$, and $m=4an$, we find that for all sufficiently large $n$, there exists an odd prime $p\nmid an$ with
\[
p\le (4an)^{3/8}\ll_a n^{3/8}
\]
and $\chi(p)=1$.
Hence the congruence $a x^2\equiv n\pmod p$ has exactly two roots $r_1,r_2$ modulo $p$.

Define
\[
S:=\{k\ge 1:k<\sqrt{n/a},\ k\equiv r_1\text{ or }r_2\pmod p,\ (k,n)=1\}.
\]
If $k\in S$, then $p\mid n-a k^2$. Since $a k^2<n$ and $P_a(n)$ holds, the number $n-a k^2$ is prime, so necessarily
\[
n-a k^2=p.
\]
But this equation has at most one positive integer solution $k$. Therefore $|S|\leq 1$.

Now we estimate $|S|$ from below, using inclusion-exclusion to handle the relative primality constraint.
Set
\[
M:=\frac{\sqrt{n/a}}{p}.
\]
For each $i\in\{1,2\}$, the number of integers $t\ge0$ with $r_i+t p<\sqrt{n/a}$ is $M+O(1)$.
By M\"obius inversion over $\operatorname{rad}(n)$,
\[
\#S
=\sum_{i=1}^2\ \sum_{m\mid\operatorname{rad}(n)}\mu(m)\,\#\{t\ge0:r_i+t p<\sqrt{n/a},\ r_i+t p\equiv0\pmod m\}.
\]
Since $p\nmid n$, each congruence in $t$ gives one residue class modulo $m$, hence
\[
\#S
=
2M\frac{\varphi(n)}{n}+O(2^{\omega(n)})
=
\frac{2\sqrt{n/a}}{p}\frac{\varphi(n)}{n}
+O(2^{\omega(n)}).
\]
Using $\varphi(n)/n\gg 1/\log\log n$, $2^{\omega(n)}=n^{o(1)}$, and $p\ll_a n^{3/8}$, we get
\[
\#S\gg_a \frac{n^{1/8}}{\log\log n}>1
\]
for all sufficiently large $n$, contradiction.

\smallskip
\noindent\textit{Case 2: $d=1$ (equivalently, $an$ is a square).}
Then for every odd prime $p\nmid an$, the congruence $a x^2\equiv n\pmod p$ is automatically solvable.
Let $p$ be the least odd prime with $p\nmid an$.
A standard estimate via e.g.\ the prime number theorem gives
\[
p\ll \log(an)\ll_a \log n.
\]
Choose two roots $r_1,r_2\pmod p$ of $a x^2\equiv n\pmod p$, and define $S$ exactly as above.
The same argument yields
\[
\#S
=
\frac{2\sqrt{n/a}}{p}\frac{\varphi(n)}{n}
+O(2^{\omega(n)})
\gg_a
\frac{\sqrt n}{\log n\,\log\log n}>1
\]
for all sufficiently large $n$, again contradicting $\#S\le 1$ and completing the proof.
\end{proof}

\bibliographystyle{amsplain0}
\bibliography{main}

\end{document}